\newtheorem{definition}{Definition}[section]
\newtheorem{theorem}{Theorem}[section]
\newtheorem{proposition}{Proposition}[section]
\newtheorem{lemma}{Lemma}[section]
\newtheorem{remark}{Remark}[section]
\numberwithin{equation}{section}
\newif\ifcomment \commentfalse
\newcommand{\remove}[1]{}
\newenvironment{vardesc]}[1]{%
\settowidth{\parindent}{#1: \ }
\makebox{#1:}}{}
\begin{document} 

\begin{frontmatter}

\title{Well-posedness of a model of nonhomogeneous compressible-incompressible fluids}

\author[Bianchini]{Roberta Bianchini}


\ead{bianchin@mat.uniroma2.it}
\fntext[Bianchini]{Dipartimento di Matematica, Universit\`a degli Studi di Roma "Tor Vergata", via della Ricerca Scientifica 1, I-00133 Rome, Italy - Istituto per le Applicazioni del Calcolo "M. Picone", Consiglio Nazionale delle Ricerche, via dei Taurini 19, I-00185 Rome, Italy. }

\author[Natalini]{Roberto Natalini}
\ead{roberto.natalini@cnr.it}
\fntext[Natalini]{Istituto per le Applicazioni del Calcolo "M. Picone", Consiglio Nazionale delle Ricerche, via dei Taurini 19, I-00185 Rome, Italy.}
\begin{abstract}
We propose a model of a density-dependent compressible-incompressible fluid, which is intended as a simplified version of models based on mixture theory as, for instance, those arising in the study of biofilms, tumor growth and vasculogenesis. Though our model is, in some sense, close to the density-dependent incompressible Euler equations, it presents some differences that require a different approach from an analytical point of view. In this paper, we establish a result of local existence and uniqueness of solutions in Sobolev spaces to our model, using paradifferential techniques. Besides, we show the convergence of both a continuous version of the Chorin-Temam projection method, viewed as a singular perturbation type approximation, and the 'artificial compressibility method'.
\end{abstract}

\begin{keyword} 
Fluid dynamics model \sep mixture theory \sep multiphase model \sep compressible pressure \sep incompressible pressure \sep divergence free \sep  variable density.
\end{keyword} 

\end{frontmatter}

\section{Introduction}
\label{intro}

In this paper, we consider a fluid described by the following equations in $\mathbb{R}^{d}$,
\begin{equation}
\label{Goal_System}
\begin{cases}
& \partial_{t}\rho + \nabla \cdot (\rho v) = 0, \\
& \partial_{t}{v}+{v} \cdot \nabla {v} + f(\rho, v) \nabla \rho+ \nabla {P}=0, \\
& \nabla \cdot v = 0,
\end{cases}
\end{equation}
with initial data 
\begin{equation}
\label{initial}
 \rho(0,x)=\rho_{0}(x), ~~~~~~~~~~ v(0,x)=v_{0}(x) ~~\text{such that}~~\nabla \cdot v_{0}(x)=0,
\end{equation}  
where $f(\rho, v)$ is a scalar function of $(\rho, v) \in \mathbb{R}^{d+1}$. System (\ref{Goal_System}) describes the motion of a nonhomogeneous, also called density-dependent, fluid. The nonnegative scalar function $\rho$ is the density of the fluid, $v \in \mathbb{R}^{d}$ its velocity, and $P$ is the incompressible hydrostatic pressure generated by the divergence free constraint. The term $f(\rho, v) \nabla \rho$ is a slight generalization of a compressible pressure. This system is intended as a toy model for a general class of problems arising from mixture theory, which present the coehexistence of the hydrostatic pressure and a compressible pressure term. For instance, consider a simplified version of the model in \cite{cdnr}, composed of just two constituents, a solid(/gel) phase $S$ and a liquid phase $L$:
\begin{equation}
\label{Biofilms_System}
\begin{cases}
& \partial_{t}S + \nabla \cdot (S v_{S}) = \Gamma_{S}, \\
&\partial_{t}L + \nabla \cdot (L v_{L}) = \Gamma_{L}, \\
& \partial_{t}{v}_{S}+{v}_{S} \cdot \nabla {v}_{S} +\frac{\gamma \nabla (1-L)}{S}+ \nabla {P}=\Gamma_{v_{S}}, \\
& \partial_{t}{v}_{L}+{v}_{L} \cdot \nabla {v}_{L} + \nabla {P}=\Gamma_{v_{L}}, \\
& S+L=1, \\
& \Gamma_{S}+\Gamma_{L}=0,
\end{cases}
\end{equation}
where $v_{S}, v_{L}$ are the velocities of the solid and the liquid phase respectively, $\Gamma_{S}, \Gamma_{L}, \Gamma_{v_{S}}, \Gamma_{v_{L}}$ are source terms and $\gamma$ is an experimental constant. Using the last two conservation constraints, system (\ref{Biofilms_System}) can be reduced to 
\begin{equation}
\label{Biofilms_System1}
\begin{cases}
& \partial_{t}S + \nabla \cdot (S v_{S}) = \Gamma_{S}, \\
& \partial_{t}{v}_{S}+{v}_{S} \cdot \nabla {v}_{S} +\frac{\gamma \nabla S}{S}+ \nabla {P}=\Gamma_{v_{S}}, \\
& \partial_{t}{v}_{L}+{v}_{L} \cdot \nabla {v}_{L} + \nabla {P}=\Gamma_{v_{L}}, \\
& \nabla \cdot (Sv_{S} + (1-S)v_{L}) = 0, \\
& L=1-S,
\end{cases}
\end{equation}
where the equation for the solid phase velocity $v_{S}$ presents a pressure term composed by two parts, one incompressible part $\nabla P$ and the compressible one $\frac{\gamma \nabla (S)}{S} = \gamma \nabla log(S)$. More generally, these kinds of problems, which are characterized by the interaction between compressible and incompressible pressure terms, arise from \emph{mixture theory}, as, for instance, models of biofilms \cite{cdnr}, tumor growth \cite{astanin} and organic tissues and vasculogenesis \cite{Farina}. Anyway, models deriving from mixture theory are similar to system (\ref{Biofilms_System1}), by replacing the compressible pressure $\gamma log(S)$ with a more general function $P(S)=\phi(S)$ only depending on the solid phase. As a matter of facts, model (\ref{Biofilms_System1}) presents several analytical difficulties, which we are trying to understand by studying a simplified version. In order to do this, the first idea is to consider a model where the solid phase $S$ and the liquid $L$ have the same transport velocity $v=v_{S}=v_{L}$, and whose equation contains a compressible pressure term. These assumptions give the following model:

\begin{equation}
\label{density_dependent_ours}
\begin{cases}
& \partial_{t}\rho + \nabla \cdot (\rho v) = 0, \\
& \partial_{t}{v}+{v} \cdot \nabla {v} + \nabla \phi(\rho) + {\nabla {P}}=0, \\
& \nabla \cdot v = 0,
\end{cases}
\end{equation}

where $\rho$ is the density of the fluid and $\phi(\rho)$ a general compressible pressure. Actually, by defining a new pressure term $Q:=P+\phi(\rho)$, model (\ref{density_dependent_ours}) can be reduced to

\begin{equation}
\label{reduced}
\begin{cases}
& \partial_{t}\rho + \nabla \cdot (\rho v) = 0, \\
& \partial_{t}{v}+{v} \cdot \nabla {v} + \nabla Q=0, \\
& \nabla \cdot v = 0, \\
& P=Q-\phi(\rho), \\
\end{cases}
\end{equation}

which is just the homogeneous incompressible Euler equations plus a transport equation for the density variable and, although the techniques developed in this paper continue to work, it can be solved in a trivial way. Therefore, we are going to study a mathematical generalization of model (\ref{density_dependent_ours}), represented by our system in (\ref{Goal_System}), which has got most of the analytical difficulties of model (\ref{Biofilms_System1}). Let us notice that system (\ref{Goal_System}) is somewhat similar to the density-dependent incompressible Euler equations
\begin{equation}
\label{density_dependent}
\begin{cases}
& \partial_{t}\rho + \nabla \cdot (\rho v) = 0, \\
& \partial_{t}{v}+{v} \cdot \nabla {v} + \frac{\nabla {P}}{\rho}=0, \\
& \nabla \cdot v = 0,
\end{cases}
\end{equation}
which have been studied by many authors, see for instance \emph{J. E. Marsden} \cite{Marsden}, \emph{H. Beir\~{a}o da Veiga} \cite{Da Veiga}, \emph{A. Valli} \cite{Valli} \& \emph{R. Danchin} \cite{Danchin}. Let us remark that, in \cite{Valli}, \emph{Valli} and \emph{Zajaczkowski} have studied  model (\ref{density_dependent}) by using an approximating system where the divergence of the velocity field gradually vanishes in a similar way to the {Chorin-Temam projection method} in \cite{Temam}. Although model (\ref{Goal_System}) looks quite similar to the density-dependent model (\ref{density_dependent}), it happens that, apart from the vorticity method, which we have still not fully explored, most of the ideas used to approach it do not apply to our system. The main problem is that we cannot expect to gain one more space derivative for the pressure term $P$ with respect to the regularity of the other unknowns $\rho, v$, as it is in the case of classical incompressible fluids. This can be viewed by applying the divergence operator to the velocity equation in (\ref{Goal_System}), which yields
\begin{equation}
\label{Pressure_elliptic}
\Delta P + \nabla \cdot (f(\rho, v) \nabla \rho))= - \nabla v \nabla v.
\end{equation}
Now, if  $\rho \in C([0,T], H^{m}(\mathbb{R}^{d}))$, then $P \in C([0,T], H^{m}(\mathbb{R}^{d}))$, namely $\nabla P \in C([0,T], H^{m-1}(\mathbb{R}^{d}))$. On the other hand, if we look at the method of resolution of the density-dependent Euler equations in \cite{Valli}, this leads to consider the following elliptic equation in the pressure term
\begin{equation}
\nabla \cdot \Bigg(\frac{\nabla P}{\rho}\Bigg) = - \nabla v \nabla v.
\end{equation}
In this case, the regularity of $\rho$ and $v$ guarantees, if $m > [d/2]+1$, one more space derivative of regularity for the pressure term, namely: if $\rho, v$ belong to $ C([0,T], H^{m}(\mathbb{R}^{d}))$, then $P$ belongs to $C([0,T], H^{m+1}(\mathbb{R}^{d}))$. Conversely, considering system (\ref{Goal_System}) and the related elliptic equation (\ref{Pressure_elliptic}), we realize that it is not easy to obtain energy estimates on equation (\ref{Pressure_elliptic}), since we have not enough regularity in terms of the incompressible pressure $P$. This is exactly what happens for system (\ref{Biofilms_System1}). Nevertheless, here we estabilish the well-posedeness of system (\ref{Goal_System}), using an approximation based on paradifferential calculus. Next, we show the convergence of a new singular perturbation approximation that can be considered as a continuous - in time - version of the {projection method} in \cite{Temam}, which turns out to work also on the homogeneous incompressible Euler equations. Finally, we briefly show that also the more classical artificial compressibility method in \cite{Temam} works on system (\ref{Goal_System}). We point out that these three methods - the application of paradifferential calculus, our continuous version of the projection method and the adapted artificial compressibility, which we have applied in order to prove well-posedness of system (\ref{Goal_System}) - do not seem to work on the density-dependent Euler equations, see Remarks \ref{remark1}, \ref{remark3}, and \ref{remark2}. 

\subsection{Organization of the paper}
The paper is organised as follows. First, we explain some structural characteristics of system (\ref{Goal_System}), which will be useful in the following. Section 2 is devoted to the definition of the first type of approximation based on paradifferential operators and the related proof of existence and uniqueness. In Section 3, we prove the convergence of our continuous version of the projection method. Finally, in Section 4, we shortly discuss the artificial compressibility approximation and its convergence.

\begin{remark}
\label{First_Remark}
Notice that, when the function $f(\rho, v) = f(\rho)$ only depends on the density variable $\rho$, the velocity equation in (\ref{Goal_System}), with the divergence free condition $\nabla \cdot v = 0$, leads to system (\ref{density_dependent_ours}) and then to (\ref{reduced}), where the homogeneous incompressible Euler equations can be solved in order to get the unknown density $\rho$ by its transport equation. Therefore, in that case we have smooth solutions to system (\ref{Goal_System}), in accordance with the regularity of the solutions to the homogeneous incompressible Euler equations, which can be seen in \cite{Lions}.
\end{remark}

\section{General setting}
Let $\textbf{u}=(\rho, v)$ and $F_{P}=(0, \nabla P)^{T}$. System (\ref{Goal_System}) can be written in the compact form 

\begin{equation}
\label{Goal_Compact}
\begin{cases}
& \partial_{t} \textbf{u}+\sum_{j=1}^{d}A_{j}(\textbf{u})\partial_{x_{j}}\textbf{u}+F_{P}=0, \\
& \nabla \cdot v = 0, 
\end{cases}
\end{equation}

with initial data (\ref{initial})

\begin{equation}
\label{Compact_initial_data}
\textbf{u}(0, x)=\textbf{u}_{0}(x)=(\rho_{0}(x), v_{0}(x))^{T},
\end{equation}

where, in the $2$-dimensional case

\begin{equation}
\label{A11}
A_{1}(\textbf{u}) =  \left( \begin{array}{ccc}
v_{1} & \rho & 0 \\
f(\textbf{u}) & v_{1} & 0 \\
0 & 0 & v_{1} 
\end{array} \right), \ 
A_{2}(\textbf{u}) =  \left( \begin{array}{ccc}
v_{2} & 0 & \rho\\
0 & v_{2} & 0 \\
f(\textbf{u})  & 0 & v_{2} 
\end{array} \right),
\end{equation}

and, in the general $d$-case, for $j=1, \cdots, d,$

\begin{equation}
\label{Ajj}
A_{j}(\textbf{u}) =  \left( \begin{array}{ccccc}
v_{j} & \delta_{1j}\rho & \delta_{2j}\rho & \cdots & \delta_{dj}\rho\\
\delta_{j1} f(\textbf{u}) & v_{j} & 0 & \cdots & 0 \\
\delta_{j2} f(\textbf{u}) & 0 & v_{j} & \cdots & 0 \\
\cdots &  \cdots & \cdots &  v_{j} & \cdots \\
\delta_{jd} f(\textbf{u}) & 0 & 0 & \cdots & v_{j} \\
\end{array} \right).
\end{equation}

The symbol $A(\xi, \textbf{u}):=\sum_{j=1}^{d} A_{j}(\textbf{u})\xi_{j}$ associated to the paradifferential operator related to system (\ref{Goal_Compact}) is

\begin{equation}
\label{A_symbol}
A(\xi, \textbf{u}) =  \left( \begin{array}{ccccc}
\sum_{j=1}^{d}v_{j}\xi_{j} & \rho \xi_{1} & \rho\xi_{2} & \cdots & \rho_{\xi_{d}}\\
f(\textbf{u})\xi_{1} & \sum_{j=1}^{d}v_{j}\xi_{j} & 0 & \cdots & 0 \\
f(\textbf{u})\xi_{2} & 0 & \sum_{j=1}^{d}v_{j}\xi_{j} & \cdots & 0 \\
\cdots &  \cdots & \cdots &  \sum_{j=1}^{d}v_{j}\xi_{j} & \cdots \\
f(\textbf{u})\xi_{d} & 0 & 0 & \cdots & \sum_{j=1}^{d}v_{j}\xi_{j}\\
\end{array} \right).
\end{equation}

We get the eigenvalues

\begin{itemize}
\item $\lambda_{1} = \cdots =\lambda_{d-1}=\sum_{j=1}^{d}v_{j}\xi_{j},$
\item $\lambda_{d} = \sum_{j=1}^{d}v_{j}\xi_{j} - \sqrt{f(\textbf{u}) \rho} |\xi|,$
\item $\lambda_{d+1} = \sum_{j=1}^{d}v_{j}\xi_{j} + \sqrt{f(\textbf{u}) \rho} |\xi|,$
\end{itemize}

and the related eigenvectors

\begin{itemize}
\item $\textbf{e}_{1} = (0, -\xi_{2}, \xi_{1}, 0, \cdots, 0)^{T}$,
\item $\textbf{e}_{2} = (0, -\xi_{3}, 0, \xi_{1}, 0, \cdots, 0)^{T}$,
\item $\textbf{e}_{j}=(0, -\xi_{j+1}, 0, \cdots, 0, \xi_{1}, 0, \cdots, 0)^{T}$,
\item $\textbf{e}_{d-1}=(0, -\xi_{d}, 0, \cdots, 0, \xi_{1})^{T}$,
\item $\textbf{e}_{d} = (-\sqrt{\rho}|\xi|,  \sqrt{f(\textbf{u})}\xi_{1}, \sqrt{f(\textbf{u})}\xi_{2}, \cdots,  \sqrt{f(\textbf{u})}\xi_{d})^{T}$,
\item $\textbf{e}_{d+1} = (\sqrt{\rho}|\xi|, \sqrt{f(\textbf{u})}\xi_{1},  \sqrt{f(\textbf{u})}\xi_{2}, \cdots, \sqrt{f(\textbf{u})}\xi_{d})^{T}$.
\end{itemize}

\begin{remark} \textbf{Assumptions}:
\label{assumptions}
notice from above that, in order to have real and semisimple eigenvalues, which is an essential property to guarantee hyperbolicity (see \cite{Metivier}), we have to assume $f(\textbf{u})$ strictly positive.
\end{remark}

Now, let us go back to the general setting of system (\ref{Goal_Compact}). First, let us neglect for a while the incompressible pressure term $F_{P}$ in (\ref{Goal_Compact}). Therefore, it is easy to check that we are considering a Friedrichs symmetrizable hyperbolic system, whose (positive definite, since Remark \ref{assumptions} and Remark \ref{Translation}) symmetrizer is the diagonal $(d+1) \times (d+1)$ matrix

\begin{equation}
\label{Symmetrizer1}
A_{0}(\textbf{u})=diag\Bigg(\frac{f(\textbf{u})}{\rho}, 1, 1, \cdots, 1\Bigg).
\end{equation}

To clarify the calculations below, we write the explicit expression of

\begin{equation}
\label{Symm_Aj}
A_{0}A_{j} = \left( \begin{array}{ccccc}
\frac{f(\textbf{u}) v_{j}}{\rho} & \delta_{1j}f(\textbf{u}) & \delta_{2j}f(\textbf{u}) & \cdots & \delta_{dj}f(\textbf{u}) \\
\delta_{j1} {f(\textbf{u})} & v_{j} & 0 & \cdots & 0 \\
\delta_{j2} {f(\textbf{u})} & 0 & v_{j} & \cdots & 0 \\
\cdots &  \cdots & \cdots &  v_{j} & \cdots \\
\delta_{jd} {f(\textbf{u})} & 0 & 0 & \cdots & v_{j} \\
\end{array} \right),
\end{equation}
for $j=1, \cdots d$.

\begin{remark}
\label{Translation}
Since (\ref{Symmetrizer1}) and (\ref{Symm_Aj}), we require that the scalar variable $\rho$ is not vanishing  for every $(x,t)$. This can be obtained by recalling that the density equation in (\ref{Goal_System}) can be written as
$$\displaystyle \partial_{t}\rho+\nabla \rho \cdot v = 0.$$
Therefore, if the initial datum $\rho_{0}$ in (\ref{initial}) is not vanishing for all $x \in \mathbb{R}^{d}$, then $\rho(t,x)$ cannot vanish under some standard assumptions of regularity. In particular, if the initial datum for the density  $\rho_{0}$ is in $W^{1,\infty}$ and  $v \in L^{1}([0,T], Lip(\mathbb{R}^{d}))$, the positivity of $\rho$ follows by the results in \cite{Lad}. In the following, we are going to prove that, fixing a constant value $\bar{\rho}$, if we take $\rho_{0}$ such that $\rho_{0}-\bar{\rho} \in H^{m}(\mathbb{R}^{d})$, with $m>[d/2]+1$, then $(\rho-\bar{\rho}, v) \in C([0,T], H^{m}(\mathbb{R}^{d})) \cap C^{1}([0,T], H^{m-1}(\mathbb{R}^{d}))$, and so we are in the assumptions of Proposition 1 in \cite{Lad}. Finally, notice that the non-vanishing density variable is a condition also required by the particular case of system (\ref{density_dependent_ours}) where, as in (\ref{Biofilms_System1}), the compressible pressure $\nabla \Phi(\rho)= \frac{\gamma \nabla \rho}{\rho}$ and, overall, by the general system (\ref{Biofilms_System1}), where $S$ is the considered density. This last observation shows that, although explicitly shown only by the expression of the symmetrizer (\ref{Symmetrizer1}), this is an intrinsic hypothesis of models (\ref{Goal_System}), (\ref{density_dependent_ours}) and (\ref{Biofilms_System1}).
\end{remark}

At this point, applying the symmetrizer $A_{0}(\textbf{u})$ to system (\ref{Goal_Compact}), we obtain the symmetric formulation

\begin{equation}
\label{Symmetric_Compact}
A_{0}(\textbf{u})\partial_{t}\textbf{u}+\sum_{j=1}^{d}A_{0}A_{j}(\textbf{u})\partial_{x_{j}}\textbf{u}+A_{0}(\textbf{u})F_{P}=0.
\end{equation}

We point out that

\begin{equation}
\label{Reasonable}
A_{0}(\textbf{u})F_{P}= diag\Bigg(\frac{f(\textbf{u})}{\rho}, 1, 1, \cdots, 1\Bigg) \cdot (0, \nabla P)^{T} = (0,\nabla P)^{T}=F_{P}.
\end{equation}

This means that the $A_{0}$-scalar product preserves the gradient function $\nabla P$ and this essential fact makes possible to merge energy estimates induced by the symmetrizer, as in \cite{Majda}, \cite{Metivier}, \cite{Benzoni}, with the Leray projector in \cite{Bertozzi}, \cite{Klainerman}, \cite{Temam}.
We rewrite equation (\ref{Symmetric_Compact}) using (\ref{Reasonable}). Then, we have

\begin{equation}
\label{Symmetric}
A_{0}(\textbf{u})\partial_{t}\textbf{u}+\sum_{j=1}^{d}A_{0}A_{j}(\textbf{u})\partial_{x_{j}}\textbf{u}+F_{P}=0.
\end{equation}

Now, following \cite{Bertozzi}, \cite{Klainerman}, \cite{Temam}, we project equation (\ref{Symmetric}) onto the space of the divergence free velocity $v$. Namely, setting

\begin{equation}
\label{Projector}
\textbf{P}= \left( \begin{array}{cccc}
Id & 0 \\
0 &  \mathbb{P} \\
\end{array} \right),
\end{equation}

where $\mathbb{P}$ is the standard Leray projector and, applying the projector operator (\ref{Projector}) to (\ref{Symmetric}), this last equation becomes

\begin{equation}
\label{System_Pressure_Free}
\textbf{P}(A_{0}\partial_{t}\textbf{u}+\sum_{j=1}^{d}A_{0}A_{j}(\textbf{u})\partial_{x_{j}}\textbf{u})=0.
\end{equation}

By definition (\ref{Projector}), $$\displaystyle \textbf{P}(A_{0}\partial_{t}\textbf{u}) = \Bigg(\frac{f(\textbf{u})}{\rho} \partial_{t}\rho, \mathbb{P}(\partial_{t}v)\Bigg)^{T} = \Bigg(\frac{f(\textbf{u})}{\rho} \partial_{t}\rho, \partial_{t}v \Bigg)^{T},$$ thanks to the divergence free property of the unknown $v$ in (\ref{Goal_System}). This observation leads to the alternative formulation

\begin{equation}
\label{Proj_Formulation}
\Bigg(\frac{f(\textbf{u})}{\rho}\partial_{t}\rho, \partial_{t}v \Bigg)^{T} + \sum_{j=1}^{d} \textbf{P}A_{0}A_{j}(\textbf{u})\partial_{x_{j}}\textbf{u}=0.
\end{equation}

Alternatively, it is possible to project system (\ref{Goal_Compact}) and, after that, apply the symmetrizer $A_{0}$ just to get energy estimates. Thus, we have

\begin{equation}
\label{Proj_Formulation_no_Symm}
\partial_{t}\textbf{u} + \sum_{j=1}^{d} \textbf{P}A_{j}(\textbf{u})\partial_{x_{j}}\textbf{u}=0,
\end{equation}

without any condition on the divergence of the velocity field $v$, i.e. the second equation in (\ref{Goal_Compact}), which is implicitly contained in (\ref{Proj_Formulation}) and (\ref{Proj_Formulation_no_Symm}).

\begin{proposition}
Systems (\ref{Proj_Formulation}) and (\ref{Proj_Formulation_no_Symm}) are equivalent.
\end{proposition}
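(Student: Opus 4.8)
The plan is to exploit the block-diagonal structure of both the projector $\textbf{P}=\mathrm{diag}(Id,\mathbb{P})$ and the symmetrizer $A_{0}(\textbf{u})=\mathrm{diag}(f(\textbf{u})/\rho,1,\dots,1)$, and to separate the density component from the velocity components in each formulation. The structural observation I would record first is that, since $A_{0}$ rescales only the first row by the scalar factor $f(\textbf{u})/\rho$ and leaves the remaining $d$ rows untouched, the velocity block (rows $2,\dots,d+1$) of $A_{0}A_{j}$ coincides exactly with the velocity block of $A_{j}$, as one reads off by comparing (\ref{Symm_Aj}) with (\ref{Ajj}). The only discrepancy between $A_{0}A_{j}$ and $A_{j}$ lives in the density row.

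First I would treat the velocity equations. Because $\textbf{P}$ acts as the Leray projector $\mathbb{P}$ on the last $d$ components and as the identity on the first, the velocity part of both (\ref{Proj_Formulation}) and (\ref{Proj_Formulation_no_Symm}) takes the form $\partial_{t}v+\mathbb{P}\sum_{j}[\,\cdot\,\partial_{x_{j}}\textbf{u}\,]_{v}=0$, where the bracketed velocity rows are supplied by $A_{0}A_{j}$ in the first case and by $A_{j}$ in the second. By the structural observation these rows are identical, so the two velocity equations are literally the same and this half is immediate.

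Next I would treat the density equations, which is where the factor $f(\textbf{u})/\rho$ genuinely enters. Since $\mathbb{P}$ never touches the first component, $\textbf{P}$ acts as the identity there, and the density row of (\ref{Proj_Formulation}) is obtained from that of (\ref{Proj_Formulation_no_Symm}) by multiplying through by $f(\textbf{u})/\rho$: using the first rows of $A_{j}$ and of $A_{0}A_{j}$ one computes that (\ref{Proj_Formulation_no_Symm}) gives $\partial_{t}\rho+v\cdot\nabla\rho+\rho\,\nabla\cdot v=0$, whereas (\ref{Proj_Formulation}) gives $\frac{f(\textbf{u})}{\rho}\big(\partial_{t}\rho+v\cdot\nabla\rho+\rho\,\nabla\cdot v\big)=0$.

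Finally I would close the equivalence by invoking positivity: the scalar factor $f(\textbf{u})/\rho$ is strictly positive by Remark \ref{assumptions} (which forces $f(\textbf{u})>0$) together with Remark \ref{Translation} (which guarantees that $\rho$ does not vanish). Hence multiplication or division by $f(\textbf{u})/\rho$ is an invertible operation, and the two density equations are equivalent; combined with the velocity half this shows that (\ref{Proj_Formulation}) and (\ref{Proj_Formulation_no_Symm}) are equivalent. There is no genuine analytic obstacle here; the only point that requires care is verifying that the block-diagonal form of $\textbf{P}$ prevents the Leray projection from coupling the density row to the velocity rows, so that the scalar rescaling argument may be applied to the density equation in isolation.
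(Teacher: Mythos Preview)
Your argument is correct: separating density and velocity components and using that $A_{0}$ is the identity on the velocity block while multiplying the density row by the nonvanishing scalar $f(\textbf{u})/\rho$ gives the equivalence directly. The paper's proof encodes exactly the same block-diagonal observation but packages it as a one-line commutator computation: apply $A_{0}$ to (\ref{Proj_Formulation_no_Symm}), write $A_{0}\textbf{P}=\textbf{P}A_{0}+[A_{0},\textbf{P}]$, and note that $[A_{0},\textbf{P}]=0$ because both matrices are block-diagonal with the same block structure (scalar on the density slot, $d\times d$ on the velocity slot). Your component-wise verification is what that commutator vanishing says when unpacked; the commutator route is marginally slicker in that it avoids writing out the two equations separately and makes the invertibility of $A_{0}$ do the work in one stroke, but there is no substantive difference.
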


\begin{proof}
Applying the symmetrizer $A_{0}$ in (\ref{Symmetrizer1}) to equation (\ref{Proj_Formulation_no_Symm}), we get
\begin{equation}
A_{0}(\textbf{u})\partial_{t}\textbf{u}+\sum_{j=1}^{d}A_{0}(\textbf{u})\textbf{P}A_{j}(\textbf{u})\partial_{x_{j}}\textbf{u}=0.
\end{equation}
This way, we have
\begin{equation}
\label{quasi}
A_{0}(\textbf{u})\partial_{t}\textbf{u} + \sum_{j=1}^{d} \textbf{P}A_{0}A_{j}(\textbf{u})\partial_{x_{j}}\textbf{u}+[A_{0}(\textbf{u}), \textbf{P}]A_{j}(\textbf{u})\partial_{x_{j}}\textbf{u}=0.
\end{equation}
Finally, we notice that

\begin{equation}
[A_{0}(\textbf{u}), \textbf{P}]=\Bigg[\left( \begin{array}{cccc}
\frac{f(\textbf{u})}{\rho} & 0 \\
0 &  Id \\
\end{array} \right), \left( \begin{array}{cccc}
1 & 0 \\
0 &  \mathbb{P} \\
\end{array} \right) \Bigg] = 0.
\end{equation}
\end{proof}

Now, we give the definition of classical local solutions to system (\ref{Goal_System}) with initial data (\ref{initial}). 

\begin{definition}
\label{classical_solution}
Let $m > [d/2]+1$ be fixed. The function $\textbf{u}=(\rho, v)$ is a classical solution to system (\ref{Goal_System}) with data (\ref{initial}) if, fixed a positive constant value $\bar{\rho}$, we have that $(\rho-\bar{\rho}, v) \in C([0,T], H^{m}(\mathbb{R}^{d})) \cap C^{1}([0,T], H^{m-1}(\mathbb{R}^{d}))$, with $\rho > 0$, and $\textbf{u}$ solves system

\begin{equation}
\label{Solution}
\begin{cases}
& \partial_{t}\textbf{u} + \sum_{j=1}^{d}A_{j}(\textbf{u})\partial_{x_{j}}\textbf{u}+(0, \nabla P)^{T}=0, \\
& \nabla \cdot v = 0, \\
& \rho(0,x)=\rho_{0}(x), ~~~~~~~~~~ v(0,x)=v_{0}(x) ~~\text{with}~~\nabla \cdot v_{0}(x)=0,
\end{cases}
\end{equation}
where the incompressible pressure $P \in C([0,T], H^{m}(\mathbb{R}^{d}))$. 
\end{definition}

Before proceeding in our proof of well-posedness, we make a slight modification of system (\ref{Symmetric}): fixed a constant value $\bar{\rho}$, we translate the density defining 

\begin{equation}
\label{translation_rho}
\tilde{\rho}:=\rho-\bar{\rho},
\end{equation}

because of reasons discussed in Remark \ref{Translation}, namely (\ref{Symmetrizer1}) and (\ref{Symmetric}) are not defined in $\rho= 0$, then the unknown $\rho$ does not belong to $L^{2}(\mathbb{R}^{d})$, while the translated variable $\rho-\bar{\rho}$ does. We also define

\begin{equation}
\label{translation_u}
\tilde{\textbf{u}}:={\textbf{u}}-\bar{\textbf{u}}=(\tilde{\rho}, \tilde{v})^{T}=(\tilde{\rho}, {v})^{T},
\end{equation}

where $\bar{\textbf{u}}=(\bar{\rho}, 0)$. By this change of variable, system (\ref{Goal_Compact}) becomes

\begin{equation}
\label{Goal_System_Translated}
\partial_{t}\tilde{\textbf{u}}+\sum_{j=1}^{d}A_{j}(\tilde{\textbf{u}}+\bar{\textbf{u}})\partial_{x_{j}}\tilde{\textbf{u}} + F_{P} = 0,
\end{equation}

with initial data $$\tilde{\textbf{u}}_{0}=(\tilde{\rho}_{0}, \tilde{v}_{0})^{T},$$ where

\begin{equation}
\label{translated_initial_data}
\tilde{\rho}_{0}(x)=\rho_{0}(x)-\bar{\rho}, ~~~~~ \tilde{v}_{0}(x)=v_{0}(x),
\end{equation}

and $\rho_{0}, v_{0}$ are the original initial data in (\ref{initial}).

\subsection{Uniqueness}
We end this section with the proof of uniqueness of solutions to system (\ref{Goal_System_Translated}). According to Definition \ref{classical_solution}, let $\tilde{\textbf{u}}_{1}, \tilde{\textbf{u}}_{2}$ be two solutions to system (\ref{Goal_System_Translated}) with initial data (\ref{translated_initial_data}). We can write

\begin{equation*}
A_{0}(\tilde{\textbf{u}}_{2}+\bar{\textbf{u}})\partial_{t}(\tilde{\textbf{u}}_{2}-\tilde{\textbf{u}}_{1})+\sum_{j=1}^{d}A_{0}A_{j}(\tilde{\textbf{u}}_{2}+\bar{\textbf{u}})\partial_{x_{j}}(\tilde{\textbf{u}}_{2}-\tilde{\textbf{u}}_{1}) + \left( \begin{array}{c}
0 \\
 \nabla P_{2} - \nabla P_{1} \\
\end{array} \right)
\end{equation*}

\begin{equation}
=(A_{0}(\tilde{\textbf{u}}_{1}+\bar{\textbf{u}})-A_{0}(\tilde{\textbf{u}}_{2}+\bar{\textbf{u}}))\partial_{t}{\textbf{u}}_{1}+\sum_{j=1}^{d}(A_{0}A_{j}(\tilde{\textbf{u}}_{1}+\bar{\textbf{u}})-A_{0}A_{j}(\tilde{\textbf{u}}_{2}+\bar{\textbf{u}}))\partial_{x_{j}}\tilde{\textbf{u}}_{1}.
\end{equation}

Setting $\textbf{w}:=\tilde{\textbf{u}}_{2}-\tilde{\textbf{u}}_{1}$ and taking the scalar product against $\textbf{w}$, we obtain

\begin{equation}
\label{uniqueness}
\frac{d}{dt}\int_{\mathbb{R}^{d}} A_{0}(\tilde{\textbf{u}}_{2}+\bar{\textbf{u}}) \textbf{w} \cdot \textbf{w} ~ dx \le c \int_{\mathbb{R}^{d}} A_{0}(\tilde{\textbf{u}}_{2}+\bar{\textbf{u}})  \textbf{w} \cdot \textbf{w} ~ dx,
\end{equation}

i.e. $\textbf{w}=0$, since $\tilde{\textbf{u}}_{1}(0,x)=\tilde{\textbf{u}}_{2}(0,x)=\tilde{\textbf{u}}_{0}=(\tilde{\rho}_{0}(x), \tilde{v}_{0}(x))^{T}$ in (\ref{translated_initial_data}).

\begin{remark}
We point out the fact that, in order to have uniqueness of solutions to system (\ref{Goal_System_Translated}), it is enough to require $\tilde{\textbf{u}} \in Lip([0, T], Lip(\mathbb{R}^{d})) \cap L^{\infty}([0,T], L^{2}(\mathbb{R}^{d}))$, because of the fact that the constant $c$ in (\ref{uniqueness}) depends on $|\tilde{\textbf{u}}|_{\infty}$, $|\partial_{t}\tilde{\textbf{u}}|_{\infty}$ and $|\nabla \tilde{\textbf{u}}|_{\infty}$.
\end{remark}

\section{Well-posedness via paradifferential calculus}
Following \cite{Bertozzi}, the first idea is to approximate the compact and translated version (\ref{Goal_System_Translated}) of system (\ref{Goal_Compact}) by a simple regularization of the equations, using mollifiers $J_{\varepsilon}$. 
\begin{definition}
\label{mollifiers}
Let $\Phi(|x|) \in C_{0}^{\infty}(\mathbb{R}^{d})$ be any positive, radial function such that $\int_{\mathbb{R}^{d}} \Phi ~ dx =1$. Fix $\varepsilon > 0$, and let $j_{\varepsilon}=\Phi(x/\varepsilon)$, the mollification $J_{\varepsilon} \textbf{w}$ of functions $\textbf{w} \in L^{2}(\mathbb{R}^{d})$ is defined by

\begin{equation}
\label{def_mollifiers}
J_{\varepsilon}\textbf{w}(x) =(j_{\varepsilon} * \textbf{w})(x) = \int_{\mathbb{R}^{d}} \Phi\Bigg(\frac{x-y}{\varepsilon}\Bigg)\textbf{w}(y) ~ dy.
\end{equation}
\end{definition}

\begin{lemma}
\label{commutation}
The modified version $\textbf{P}$ of the Leray projector $\mathbb{P}$ commutes with the diagonal matrix with mollifiers $J_{\varepsilon}$ entries.
\end{lemma}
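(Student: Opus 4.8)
The plan is to exploit the block structure of both operators and reduce the statement to the single classical fact that the Leray projector commutes with mollification, which itself follows from the translation invariance of $\mathbb{P}$.

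First I would write both operators in the block form adapted to the splitting $\textbf{u}=(\rho,v)$. The modified projector is $\textbf{P}=\mathrm{diag}(Id,\mathbb{P})$ as in (\ref{Projector}), while applying $J_{\varepsilon}$ componentwise to a vector $\textbf{w}$ is exactly the action of the diagonal operator matrix $\mathrm{diag}(J_{\varepsilon},J_{\varepsilon}Id_{d})$ (here $J_{\varepsilon}Id_{d}$ denotes $J_{\varepsilon}$ applied to each of the $d$ velocity components). Composing in block form gives
$$\textbf{P}\,J_{\varepsilon}=\mathrm{diag}\big(J_{\varepsilon},\,\mathbb{P}J_{\varepsilon}\big),\qquad J_{\varepsilon}\,\textbf{P}=\mathrm{diag}\big(J_{\varepsilon},\,J_{\varepsilon}\mathbb{P}\big),$$
so the scalar (density) block commutes trivially, and the whole assertion reduces to the operator identity $\mathbb{P}J_{\varepsilon}=J_{\varepsilon}\mathbb{P}$ on the velocity components.

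Then I would prove $\mathbb{P}J_{\varepsilon}=J_{\varepsilon}\mathbb{P}$ on the Fourier side. Both operators are Fourier multipliers: the Leray projector has the matrix symbol $I-\xi\otimes\xi/|\xi|^{2}$, while $J_{\varepsilon}$ is convolution with $j_{\varepsilon}$, hence the scalar multiplier $\widehat{j_{\varepsilon}}(\xi)$. Since a scalar commutes with any matrix, we have for a.e. $\xi$
$$\widehat{j_{\varepsilon}}(\xi)\Big(I-\frac{\xi\otimes\xi}{|\xi|^{2}}\Big)=\Big(I-\frac{\xi\otimes\xi}{|\xi|^{2}}\Big)\widehat{j_{\varepsilon}}(\xi),$$
and taking the inverse Fourier transform yields the claimed identity on $L^{2}(\mathbb{R}^{d})$. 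Equivalently, without computing symbols, one may note that $\mathbb{P}$ is translation invariant, that $J_{\varepsilon}\textbf{w}=\int_{\mathbb{R}^{d}}j_{\varepsilon}(y)\,\tau_{y}\textbf{w}\,dy$ is a superposition of the translations $\tau_{y}$, and that the $L^{2}$-boundedness of $\mathbb{P}$ lets one exchange $\mathbb{P}$ with the integral, giving $\mathbb{P}J_{\varepsilon}=J_{\varepsilon}\mathbb{P}$.

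There is no serious obstacle here: the statement is essentially a repackaging of the commutation of two convolution (Fourier-multiplier) operators. The only point demanding a little care is that the symbol of $\mathbb{P}$ is singular at $\xi=0$, so the manipulations above should be read as identities of bounded multipliers on $L^{2}(\mathbb{R}^{d})$ via Plancherel, and the interchange of $\mathbb{P}$ with the mollification integral is justified by the continuity of $\mathbb{P}$ on $L^{2}$. Once $\mathbb{P}J_{\varepsilon}=J_{\varepsilon}\mathbb{P}$ is established, the block computation above delivers $\textbf{P}J_{\varepsilon}=J_{\varepsilon}\textbf{P}$, which is the assertion.
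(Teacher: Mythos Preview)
Your proof is correct and follows essentially the same approach as the paper: both arguments observe that $J_{\varepsilon}$ and $\textbf{P}$ are Fourier multipliers (with symbols $\widehat{j_{\varepsilon}}(\xi)$ and $\textbf{P}(\xi)$ respectively) and therefore commute. Your version is more detailed---you make the block reduction to $\mathbb{P}J_{\varepsilon}=J_{\varepsilon}\mathbb{P}$ explicit, note that the scalar symbol commutes with the matrix symbol, and flag the care needed at $\xi=0$---whereas the paper simply invokes the general fact that Fourier multipliers commute.
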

\begin{proof}
By definition (\ref{def_mollifiers}) and Fourier transform property, it follows that
\begin{equation}
J_{\varepsilon}\textbf{w}(x)=(j_{\varepsilon}*\textbf{w})(x)=\mathcal{F}^{-1}(\hat{{j}_{\varepsilon}}(\xi) \hat{\textbf{w}}(\xi)).
\end{equation}
This implies that $\hat{{j}_{\varepsilon}}(\xi)$, the symbol of the mollifier operator $J_{\varepsilon}$, only depends on $\xi$, as also the projector $\textbf{P}$ in (\ref{Projector}). Then, we are dealing with two different Fourier multipliers, which commute (see \cite{Metivier}).
\end{proof}

Now, recalling equations (\ref{Goal_System}) and since the divergence free condition for the velocity field $v$ holds, we can reduce the first equation of system (\ref{Goal_System}) to a transport equation, obtaining the following equations:

\begin{equation}
\label{Goal_System_Reduced}
\begin{cases}
& \partial_{t}\rho + v \cdot \nabla \rho=0, \\
& \partial_{t}{v}+{v} \cdot \nabla {v} + f(\rho, v) \nabla \rho+ \nabla {P}=0, \\
& \nabla \cdot v = 0.
\end{cases}
\end{equation}

This observation leads to a slight modification, compared to (\ref{A11}) and (\ref{Ajj}), in the expressions of the matrices $A_{j}(\textbf{u})$ associated to the compact formulation $$\displaystyle \partial_{t}\textbf{u}+\sum_{j=1}^{d}A_{j}(\textbf{u})\partial_{x_{j}}\textbf{u}=0$$ of system (\ref{Goal_System_Reduced}). Then, we have
\begin{equation}
\label{A111}
A_{1}(\textbf{u}) =  \left( \begin{array}{ccc}
v_{1} & 0 & 0 \\
f(\textbf{u}) & v_{1} & 0 \\
0 & 0 & v_{1} 
\end{array} \right), \ 
A_{2}(\textbf{u}) =  \left( \begin{array}{ccc}
v_{2} & 0 & 0 \\
0 & v_{2} & 0 \\
f(\textbf{u})  & 0 & v_{2} 
\end{array} \right),
\end{equation}

in the $2$-dimensional case and, in the general $d$-case, for $j=1, \cdots, d,$

\begin{equation}
\label{Ajjj}
A_{j}(\textbf{u}) =  \left( \begin{array}{ccccc}
v_{j} &0  & 0 & \cdots & 0\\
\delta_{j1} f(\textbf{u}) & v_{j} & 0 & \cdots & 0 \\
\delta_{j2} f(\textbf{u}) & 0 & v_{j} & \cdots & 0 \\
\cdots &  \cdots & \cdots &  v_{j} & \cdots \\
\delta_{jd} f(\textbf{u}) & 0 & 0 & \cdots & v_{j} \\
\end{array} \right).
\end{equation}

Taking into account the translation and the related definition of the new variable $\tilde{\textbf{u}}$ that we have made in the previous section, we are ready to define our approximating system of equations (\ref{Goal_System_Translated}):

\begin{equation}
\label{P_approx}
\begin{cases}
& \partial_{t} \tilde{\textbf{u}}^{\varepsilon} + \sum_{j=1}^{d} J_{\varepsilon}A_{j}(J_{\varepsilon}(\tilde{\textbf{u}}^{\varepsilon}+\bar{\textbf{u}}))\partial_{x_{j}}J_{\varepsilon}\tilde{\textbf{u}}^{\varepsilon} + (0, \nabla P^{\varepsilon})^{T}=0, \\
& \nabla \cdot \tilde{v}^{\varepsilon}=0,
\end{cases}
\end{equation}

with initial data $\tilde{\textbf{u}}^{\varepsilon}_{0}(x) = \tilde{\textbf{u}}_{0}(x)=(\tilde{\rho}_{0}(x), \tilde{v}_{0}(x))^{T}$ in (\ref{translated_initial_data}). Notice that this approximation explicitly contains the unknown pressure $P^{\varepsilon}$. Now, the idea is to eliminate that pressure term, by applying to (\ref{P_approx}) the modified version (\ref{Projector}) of the Leray projector operator. Recalling that $\textbf{P}$ in (\ref{Projector}) projects any vector $\tilde{\textbf{u}}=(\tilde{\rho}, \tilde{v})$ onto the space 

\begin{equation}
\label{Vs}
V^{s}:=\{(\tilde{\rho}, \tilde{v}) \in H^{s}(\mathbb{R}^{d}) : \nabla \cdot \tilde{v}=0\},
\end{equation}
and, since $\nabla \cdot v^{\varepsilon} = 0$, i.e. $\tilde{\textbf{u}}^{\varepsilon} \in V^{s},$  it follows that $$\textbf{P}\tilde{\textbf{u}}^{\varepsilon}=(\tilde{\rho}^{\varepsilon}, \mathbb{P}\tilde{v}^{\varepsilon})^{T} = (\tilde{\rho}^{\varepsilon}, \tilde{v}^{\varepsilon})^{T}=\tilde{\textbf{u}}^{\varepsilon}.$$ Equalities $\textbf{P}\partial_{t}\tilde{\textbf{u}}^{\varepsilon}=\partial_{t}\textbf{P}\tilde{\textbf{u}}^{\varepsilon}=\partial_{t}\tilde{\textbf{u}}^{\varepsilon}$ lead to another version of system (\ref{P_approx}):

\begin{equation}
\label{P_approx_compact}
\partial_{t} \tilde{\textbf{u}}^{\varepsilon} + \sum_{j=1}^{d} \textbf{P} J_{\varepsilon}A_{j}(J_{\varepsilon}(\tilde{\textbf{u}}^{\varepsilon}+\bar{\textbf{u}}))\partial_{x_{j}}J_{\varepsilon}\tilde{\textbf{u}}^{\varepsilon}=0,
\end{equation}

with, again, initial data $\tilde{\textbf{u}}^{\varepsilon}_{0}(x) = \tilde{\textbf{u}}_{0}(x)$. Now, since the mollifiers properties hold, we are able to use the Picard theorem on Banach spaces to get local solutions to system (\ref{P_approx_compact}).

\begin{theorem} (Picard Theorem on a Banach Space, \cite{Bertozzi}).
\label{Picard}
Let $\mathcal{U} \subseteq \textbf{B}$ be an open subset of a Banach space $\textbf{B}$ and let $F: \mathcal{U} \rightarrow \textbf{B}$ be a mapping that satisfies the following hypothesis:

\begin{itemize}
\item $F(x)$ maps $\mathcal{U}$ to $\textbf{B}$. 
\item $F$ is locally Lipschitz continuous, i.e., for any $x \in \mathcal{U}$ there exist $L>0$ and an open neighbourhood $\mathcal{U}_{x}$ of $x$ s. t.
\end{itemize}

\begin{equation}
||F(x_{1})-F(x_{2})||_{\textbf{B}} \le L ||x_{1}-x_{2}||_{\textbf{B}}
\end{equation}

for all $x_{1}, x_{2} \in \mathcal{U}_{x}$. Then, for any $x_{0} \in \mathcal{U}$, there exists a time $T$ s.t. the ODE

\begin{equation}
\frac{d x}{dt} = F(x), ~~~~~ x|_{t=0} = x_{0} \in \mathcal{U},
\end{equation}

has a unique local solution $x \in C^{1}((-T, T), \mathcal{U})$.
\end{theorem}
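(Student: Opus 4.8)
The plan is to recast the initial value problem as a fixed-point equation and to apply the contraction mapping (Banach fixed-point) theorem. First I would observe that, by the fundamental theorem of calculus, a function $x \in C^{1}((-T,T), \mathcal{U})$ solves $\frac{dx}{dt}=F(x)$ with $x(0)=x_{0}$ if and only if it solves the integral equation
\[
x(t) = x_{0} + \int_{0}^{t} F(x(s))\, ds .
\]
Thus it suffices to produce a unique fixed point of the map $\Phi(x)(t) := x_{0} + \int_{0}^{t} F(x(s))\, ds$ on a suitable complete metric space.

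To build that space, I would exploit that $\mathcal{U}$ is open and $F$ is locally Lipschitz at $x_{0}$: fix $r>0$ and $L>0$ such that the closed ball $\overline{B}(x_{0}, r) \subseteq \mathcal{U}_{x_{0}} \subseteq \mathcal{U}$ and $F$ is $L$-Lipschitz on $\overline{B}(x_{0}, r)$. Local Lipschitz continuity together with the single value $F(x_{0})$ yields a uniform bound $M := \sup_{y \in \overline{B}(x_{0},r)} \|F(y)\|_{\textbf{B}} \le \|F(x_{0})\|_{\textbf{B}} + Lr < \infty$. I would then work in
\[
X := \{ x \in C([-T,T], \textbf{B}) : \sup_{|t|\le T}\|x(t) - x_{0}\|_{\textbf{B}} \le r \},
\]
a closed subset of the Banach space $C([-T,T], \textbf{B})$ endowed with the uniform norm, hence itself a complete metric space.

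The argument then rests on two standard estimates. For invariance, for $x \in X$ one has $\|\Phi(x)(t) - x_{0}\|_{\textbf{B}} \le M|t| \le MT$, so choosing $T \le r/M$ gives $\Phi(X) \subseteq X$. For contraction, for $x_{1}, x_{2} \in X$ the Lipschitz bound gives $\|\Phi(x_{1})(t) - \Phi(x_{2})(t)\|_{\textbf{B}} \le L|t|\sup_{|s|\le T}\|x_{1}(s)-x_{2}(s)\|_{\textbf{B}} \le LT \, \|x_{1}-x_{2}\|_{X}$, so shrinking $T$ further to ensure $LT < 1$ makes $\Phi$ a contraction on $X$. The Banach fixed-point theorem then produces a unique $x \in X$ with $\Phi(x)=x$.

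Finally I would bootstrap regularity and settle uniqueness. The fixed point $x$ is continuous, so $s \mapsto F(x(s))$ is continuous and the integral equation shows $x \in C^{1}$ with $x'(t)=F(x(t))$, $x(0)=x_{0}$, and $x(t) \in \overline{B}(x_{0},r) \subseteq \mathcal{U}$. Uniqueness within $X$ is immediate from the fixed-point theorem; to rule out other $C^{1}$ solutions I would apply a Gr\"onwall estimate to the difference of two solutions, both of which remain near $x_{0}$ for small time. The step demanding the most care is the invariance bookkeeping: since the Lipschitz hypothesis is only local, one must guarantee that the iterates never exit the ball $\overline{B}(x_{0},r)$ on which $L$ and $M$ are valid. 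Once $T$ is taken to satisfy both $T \le r/M$ and $LT<1$, the remaining steps are routine.
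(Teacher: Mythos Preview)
Your argument is correct and is the standard contraction-mapping proof of the Picard theorem in Banach spaces. Note, however, that the paper does not give its own proof of this statement: it is quoted verbatim as a known result from \cite{Bertozzi} and is only \emph{applied} (in Theorems~\ref{Local Existence of Approximating Solutions_P} and~\ref{Local Existence of Approximating Solutions}) rather than proved. So there is nothing to compare against on the paper's side beyond the citation; your proposal simply supplies the classical proof that the cited reference would contain, and it does so without gaps.
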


System (\ref{P_approx_compact}) reduces to an ordinary differential equation:

\begin{equation}
\label{ODE_P}
\partial_{t}\tilde{\textbf{u}}^{\varepsilon} = F^{\varepsilon}(\tilde{\textbf{u}}^{\varepsilon}),  ~~~~~~~~~~\tilde{\textbf{u}}^{\varepsilon}|_{t=0}=\tilde{\textbf{u}}^{\varepsilon}_{0}(x),
\end{equation}

where 

\begin{equation}
\label{Function_ODE_P}
F^{\varepsilon}(\tilde{\textbf{u}}^{\varepsilon}) = -\sum_{j=1}^{d}\textbf{P} J_{\varepsilon}A_{j}(J_{\varepsilon}(\tilde{\textbf{u}^{\varepsilon}}+\bar{\textbf{u}}))\partial_{x_{j}}J_{\varepsilon}\tilde{\textbf{u}}^{\varepsilon}.
\end{equation}

We prove the following theorem:

\begin{theorem}(Local existence of approximating solutions for the first type of approximation)
\label{Local Existence of Approximating Solutions_P}
Let $\tilde{\textbf{u}}^{\varepsilon}_{0}=(\tilde{\rho}_{0}^{\varepsilon}, \tilde{v}_{0}^{\varepsilon})^{T}$ as in (\ref{translated_initial_data}) belonging to $V^{s}$ defined in (\ref{Vs}), with $s >  d/2+1$. Then, for any $\varepsilon > 0$, there exists a time $T$, independent of $\varepsilon$, such that system (\ref{P_approx_compact}) has a unique solution $\tilde{\textbf{u}}^{\varepsilon}=(\tilde{\rho}^{\varepsilon}, \tilde{v}^{\varepsilon})^{T} \in C^{1}([0, T], V^{s})$.
\end{theorem}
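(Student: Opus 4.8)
The plan is to apply the Picard theorem on Banach spaces (Theorem \ref{Picard}) with $\textbf{B} = V^{s}$ to obtain, for each fixed $\varepsilon > 0$, a local solution of the ODE (\ref{ODE_P}), and then to close a uniform-in-$\varepsilon$ energy estimate that bounds the existence time from below independently of $\varepsilon$. First I would verify the two hypotheses of Theorem \ref{Picard} for the map $F^{\varepsilon}$ in (\ref{Function_ODE_P}). That $F^{\varepsilon}$ sends $V^{s}$ into $V^{s}$ follows because the outer projector $\textbf{P}$ lands in the divergence-free space by construction, while the inner structure remains in $H^{s}$: the mollifier $J_{\varepsilon}$ gains arbitrary regularity and commutes with $\partial_{x_{j}}$, the coefficient $A_{j}(J_{\varepsilon}(\tilde{\textbf{u}}^{\varepsilon}+\bar{\textbf{u}}))$ in (\ref{Ajjj}) is a smooth function of a smooth argument (smoothness of $f$), and for $s > d/2+1$ the space $H^{s}$ is a Banach algebra, so products of the coefficient with $\partial_{x_{j}}J_{\varepsilon}\tilde{\textbf{u}}^{\varepsilon}$ stay in $H^{s}$. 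Local Lipschitz continuity of $F^{\varepsilon}$ on $V^{s}$ is then a direct consequence of the mollifier bounds $\|J_{\varepsilon}w\|_{H^{s+1}} \le C(\varepsilon)\|w\|_{H^{s}}$, the algebra property, and the local Lipschitz dependence of $A_{j}$ on its argument; the resulting constant is allowed to depend on $\varepsilon$ at this stage. Theorem \ref{Picard} then yields a unique local solution $\tilde{\textbf{u}}^{\varepsilon} \in C^{1}((-T_{\varepsilon}, T_{\varepsilon}), V^{s})$.

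The heart of the proof is the $\varepsilon$-independent a priori bound. I would symmetrize the mollified equation by $A_{0}(J_{\varepsilon}\tilde{\textbf{u}}^{\varepsilon}+\bar{\textbf{u}})$ from (\ref{Symmetrizer1}) and work with the weighted energy $E_{s}(t) = \int_{\mathbb{R}^{d}} A_{0}\,\Lambda^{s}\tilde{\textbf{u}}^{\varepsilon}\cdot\Lambda^{s}\tilde{\textbf{u}}^{\varepsilon}\,dx$, where $\Lambda^{s} := (I-\Delta)^{s/2}$. This $E_{s}$ is equivalent to $\|\tilde{\textbf{u}}^{\varepsilon}\|_{H^{s}}^{2}$ as long as $\rho^{\varepsilon}$ stays bounded away from zero and from above, a positivity propagated for short times by the transport structure of the density equation, exactly as in Remark \ref{Translation}. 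Differentiating $E_{s}$ in time and applying $\Lambda^{s}$ to the equation, the decisive simplifications are that $\textbf{P}$ is self-adjoint, commutes with $J_{\varepsilon}$ by Lemma \ref{commutation} and with $\Lambda^{s}$, and satisfies $[A_{0},\textbf{P}]=0$ as shown in the Proposition above, so $\textbf{P}$ can be moved freely onto the test function without generating uncontrolled terms; and $A_{0}A_{j}$ is symmetric, see (\ref{Symm_Aj}), so the top-order term $\int A_{0}A_{j}\,\partial_{x_{j}}\Lambda^{s}\tilde{\textbf{u}}^{\varepsilon}\cdot\Lambda^{s}\tilde{\textbf{u}}^{\varepsilon}\,dx$ integrates by parts into a harmless term involving $\partial_{x_{j}}(A_{0}A_{j})$. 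Crucially, every mollifier that appears is bounded on $H^{s}$ uniformly in $\varepsilon$ and is self-adjoint, so no negative power of $\varepsilon$ survives. Using Kato--Ponce commutator and Moser product estimates to bound $[\Lambda^{s}, A_{j}(J_{\varepsilon}(\cdot))]$ and the remaining lower-order contributions, together with the embedding $H^{s}\hookrightarrow W^{1,\infty}$ valid for $s > d/2+1$, I expect to reach a differential inequality $\frac{d}{dt}E_{s}(t) \le C(E_{s}(t))\,E_{s}(t)$ with $C$ independent of $\varepsilon$.

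The main obstacle is precisely this uniform estimate, and within it the subtle point flagged in the introduction: because the incompressible pressure does not gain a derivative in (\ref{Goal_System}), one cannot treat $\nabla P^{\varepsilon}$ as a controllable source and must rely entirely on the projected formulation (\ref{P_approx_compact}), in which the pressure has been eliminated. The delicate book-keeping is to check that inserting mollifiers inside the nonlinearity $A_{j}(J_{\varepsilon}(\cdot))$ and commuting them past $\Lambda^{s}$ and $\textbf{P}$ produces only $\varepsilon$-uniform commutators; this is exactly where the uniform boundedness, self-adjointness and derivative-commutation of $J_{\varepsilon}$, combined with $[A_{0},\textbf{P}]=0$, are indispensable.

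Once the differential inequality is in hand, a standard Gronwall argument bounds $E_{s}$, hence $\|\tilde{\textbf{u}}^{\varepsilon}\|_{H^{s}}$, on a time interval $[0,T]$ whose length depends only on $\|\tilde{\textbf{u}}_{0}\|_{H^{s}}$ and not on $\varepsilon$. A continuation argument then upgrades the local solutions from Theorem \ref{Picard}: each can be extended as long as its $H^{s}$ norm stays finite, and the a priori bound keeps it finite on $[0,T]$, so all the $\tilde{\textbf{u}}^{\varepsilon}$ exist on the common interval $[0,T]$. Since $F^{\varepsilon}$ maps $V^{s}$ into $V^{s}$, the solution remains divergence-free for all time, and we conclude $\tilde{\textbf{u}}^{\varepsilon}=(\tilde{\rho}^{\varepsilon}, \tilde{v}^{\varepsilon})^{T} \in C^{1}([0,T], V^{s})$ with $T$ independent of $\varepsilon$, as claimed.
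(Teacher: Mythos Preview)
Your overall strategy---Picard for local existence, then a uniform-in-$\varepsilon$ energy estimate to bound the lifespan from below---matches the paper. However, the mechanism you propose for the uniform estimate is genuinely different from what the paper does, and there is one point where your citation is inconsistent that you should repair.

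The paper does \emph{not} use the symmetrizer $A_{0}$ at all in this proof. Instead it passes to a paradifferential formulation (equation (\ref{Paradifferential_system})) and observes that the principal symbol of the composition $\textbf{P}\circ T_{iA}$ is the matrix $\textbf{P}(\xi)A(\xi,\cdot)$ in (\ref{PA_symbol}), which is \emph{diagonal}: the Leray block of $\textbf{P}$ kills the column $(f(\textbf{u})\xi_{1},\dots,f(\textbf{u})\xi_{d})^{T}$ because that vector is parallel to $\xi$. Since the diagonal entries are real, $\mathrm{Re}(i\,\textbf{P}A)=0$, and the $H^{s}$ estimate follows directly from paradifferential symbolic calculus without any weighting by $A_{0}$. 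So the paper buys the estimate from the algebraic structure of the projected symbol, not from Friedrichs symmetrization.

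Your symmetrizer route is viable---indeed it is essentially what the paper does in Section~4 for the second approximation---but be careful: the matrices $A_{j}$ appearing in (\ref{P_approx_compact}) are the \emph{reduced} ones (\ref{Ajjj}), and $A_{0}A_{j}^{\mathrm{red}}$ is \emph{not} symmetric, so your appeal to (\ref{Symm_Aj}) is a mismatch. The fix is to note that since $\nabla\cdot(J_{\varepsilon}\tilde v^{\varepsilon})=J_{\varepsilon}(\nabla\cdot\tilde v^{\varepsilon})=0$, one has $A_{j}^{\mathrm{red}}\partial_{x_{j}}J_{\varepsilon}\tilde{\textbf u}^{\varepsilon}=A_{j}^{\mathrm{orig}}\partial_{x_{j}}J_{\varepsilon}\tilde{\textbf u}^{\varepsilon}$, after which $A_{0}A_{j}^{\mathrm{orig}}$ is symmetric and your integration by parts goes through. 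You will also need to absorb the commutator $[J_{\varepsilon},A_{0}]$ that arises when you pass $J_{\varepsilon}$ across the weight; this is $O(\varepsilon)$ in $L^{2}$ and balances the $O(\varepsilon^{-1})$ cost of the extra derivative it sits next to, so the bound remains uniform, but it deserves an explicit sentence.
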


\begin{proof}
First of all, we show that existence and uniqueness follow from the Picard theorem. Then, we prove that the time of local existence $ T_{\varepsilon}$ can be bounded from below by any time $T>0$, which is independent of $\varepsilon$. We point out that $J_{\varepsilon}\tilde{\textbf{u}}^{\varepsilon}$ and $J_{\varepsilon}(\tilde{\textbf{u}}^{\varepsilon}+\bar{\textbf{u}})$ are $C^{\infty}$ functions and, recalling \cite{Metivier}, $\mathbb{P}$ is associated to an analytic pseudodifferential operator of order 0, modulo an infinitely smooth remainder, therefore $$F^{\varepsilon}: V^{s} \rightarrow V^{s},$$ because of the fact that $\nabla \cdot \tilde{v}^{\varepsilon}=0$ in (\ref{P_approx_compact}) and, explicitly, in (\ref{P_approx}). In order to apply the Picard theorem, we have to prove that $F^{\varepsilon}(\tilde{\textbf{u}}^{\varepsilon})$ in (\ref{Function_ODE_P}) is Lispchitz continuous. To do this, we take two vectors $\tilde{\textbf{u}}_{1}, \tilde{\textbf{u}}_{2}$ in the unknowns' space. In the following, we omit the index $\varepsilon$ in the unknown functions, where there is no ambiguity. Let $c_{S}$ be the Sobolev embedding constant. Therefore

\begin{equation*}
||F^{\varepsilon}(\tilde{\textbf{u}}_{1})-F^{ \varepsilon}(\tilde{\textbf{u}}_{2})||_{0} 
\end{equation*}

\begin{equation*}
\le \sum_{j=1}^{d}||\textbf{P} J_{\varepsilon}A_{j}(J_{\varepsilon}(\tilde{\textbf{u}}_{1}+\bar{\textbf{u}}))\partial_{x_{j}}J_{\varepsilon}\tilde{\textbf{u}}_{1} - \textbf{P}J_{\varepsilon}A_{j}(J_{\varepsilon}(\tilde{\textbf{u}}_{2}+\bar{\textbf{u}}))\partial_{x_{j}}J_{\varepsilon}\tilde{\textbf{u}}_{2}||_{0}
\end{equation*}

\begin{equation*}
\le  \sum_{j=1}^{d}\{||[\textbf{P}J_{\varepsilon}A_{j}(J_{\varepsilon}(\tilde{\textbf{u}}_{1}+\bar{\textbf{u}})) - \textbf{P}J_{\varepsilon}A_{j}(J_{\varepsilon}(\tilde{\textbf{u}}_{2}+\bar{\textbf{u}}))]\partial_{x_{j}}J_{\varepsilon}\tilde{\textbf{u}}_{1}||_{0}
\end{equation*}

\begin{equation*}
+ ||\textbf{P}J_{\varepsilon}A_{j}(J_{\varepsilon}(\tilde{\textbf{u}}_{2}+\bar{\textbf{u}}))\partial_{x_{j}}J_{\varepsilon}(\tilde{\textbf{u}}_{1}-\tilde{\textbf{u}}_{2})||_{0}\}
\end{equation*}

\begin{equation*}
= \sum_{j=1}^{d} \Bigg\{\Bigg|\Bigg|  \textbf{P}\Bigg[ \int_{0}^{1}\frac{d}{dr}(J_{\varepsilon}A_{j}(r J_{\varepsilon}(\tilde{\textbf{u}}_{1}+\bar{\textbf{u}}) + (1-r) J_{\varepsilon} (\tilde{\textbf{u}}_{2}+\bar{\textbf{u}}))) ~ dr \Bigg] \partial_{x_{j}}J_{\varepsilon}\tilde{\textbf{u}}_{1} \Bigg|\Bigg|_{0}
\end{equation*}

\begin{equation*}
+ c(|J_{\varepsilon}(\tilde{\textbf{u}}_{2}+\bar{\textbf{u}})|_{\infty}) ||\partial_{x_{j}}J_{\varepsilon}(\tilde{\textbf{u}}_{1}-\tilde{\textbf{u}}_{2})||_{0} \Bigg\}
\end{equation*}

\begin{equation*}
= \sum_{j=1}^{d} \Bigg\{\Bigg|\Bigg| \textbf{P}\Bigg[ \int_{0}^{1} (J_{\varepsilon}A_{j}(r J_{\varepsilon}(\tilde{\textbf{u}}_{1}+\bar{\textbf{u}}) + (1-r) J_{\varepsilon} (\tilde{\textbf{u}}_{2}+\bar{\textbf{u}})))' dr \Bigg]J_{\varepsilon}(\tilde{\textbf{u}}_{1}-\tilde{\textbf{u}}_{2}) ~\partial_{x_{j}}J_{\varepsilon}\tilde{\textbf{u}}_{1} \Bigg|\Bigg|_{0}
\end{equation*}

\begin{equation*}
+ c(|J_{\varepsilon}(\tilde{\textbf{u}}_{2}+\bar{\textbf{u}})|_{\infty}) ||\partial_{x_{j}}J_{\varepsilon}(\tilde{\textbf{u}}_{1}-\tilde{\textbf{u}}_{2})||_{0} \Bigg\}
\end{equation*}

\begin{equation*}
\le  \sum_{j=1}^{d} \{c(|J_{\varepsilon}(\tilde{\textbf{u}}_{1}+\bar{\textbf{u}})|_{\infty}, |J_{\varepsilon}(\tilde{\textbf{u}}_{2}+\bar{\textbf{u}})|_{\infty}, |\partial_{x_{j}}J_{\varepsilon}\tilde{\textbf{u}}_{1}|_{\infty}) (||J_{\varepsilon}(\tilde{\textbf{u}}_{1}-\tilde{\textbf{u}}_{2})||_{0} +||\partial_{x_{j}}J_{\varepsilon}(\tilde{\textbf{u}}_{1}-\tilde{\textbf{u}}_{2})||_{0})\}
\end{equation*}

\begin{equation*}
\le c(|J_{\varepsilon}(\tilde{\textbf{u}}_{1}+\bar{\textbf{u}})|_{\infty}, |J_{\varepsilon}(\tilde{\textbf{u}}_{2}+\bar{\textbf{u}})|_{\infty}, |\nabla J_{\varepsilon}\tilde{\textbf{u}}_{1}|_{\infty}) ||J_{\varepsilon}(\tilde{\textbf{u}}_{1}-\tilde{\textbf{u}}_{2})||_{1}
\end{equation*}

\begin{equation}
\le c(c_{S}, ||\tilde{\textbf{u}}_{1}||_{s}, ||\tilde{\textbf{u}}_{2}||_{s}, \bar{\rho}) ||\tilde{\textbf{u}}_{1}-\tilde{\textbf{u}}_{2}||_{1},
\end{equation}

with $\bar{\rho}$ in (\ref{translation_rho}) and where last inequality follows from Moser estimates and properties of mollifiers. Taking the $\alpha$ ($|\alpha| \le s$) derivative and the using the commutation property of $J_{\varepsilon}$ and $\textbf{P}$ in  Lemma \ref{commutation}, we have 

\begin{equation*}
||D^{\alpha}(F^{\varepsilon}(\tilde{\textbf{u}}_{1})-F^{\varepsilon}(\tilde{\textbf{u}}_{2}))||_{0} 
\end{equation*}

\begin{equation*}
\le \sum_{j=1}^{d}||D^{\alpha}(\textbf{P} J_{\varepsilon}A_{j}(J_{\varepsilon}(\tilde{\textbf{u}}_{1}+\bar{\textbf{u}}))\partial_{x_{j}}J_{\varepsilon}\tilde{\textbf{u}}_{1} - \textbf{P} J_{\varepsilon}A_{j}(J_{\varepsilon}(\tilde{\textbf{u}}_{2}+\bar{\textbf{u}}))\partial_{x_{j}}J_{\varepsilon}\tilde{\textbf{u}}_{2})||_{0}
\end{equation*}

\begin{equation*}
\le \sum_{j=1}^{d}\{|| \textbf{P}D^{\alpha}[(J_{\varepsilon}A_{j}(J_{\varepsilon}(\tilde{\textbf{u}}_{1}+\bar{\textbf{u}})) -J_{\varepsilon}A_{j}(J_{\varepsilon}(\tilde{\textbf{u}}_{2}+\bar{\textbf{u}})))\partial_{x_{j}}J_{\varepsilon}\tilde{\textbf{u}}_{1}]||_{0}
\end{equation*}

\begin{equation*}
+ ||\textbf{P}D^{\alpha}(J_{\varepsilon}A_{j}(J_{\varepsilon}(\tilde{\textbf{u}}_{2}+\bar{\textbf{u}}))\partial_{x_{j}}J_{\varepsilon}(\tilde{\textbf{u}}_{1}-\tilde{\textbf{u}}_{2}))||_{0}\}
\end{equation*}

\begin{equation*}
\le c_{S} \sum_{j=1}^{d}\{||\textbf{P}D^{s}(J_{\varepsilon}A_{j}(J_{\varepsilon}(\tilde{\textbf{u}}_{1}+\bar{\textbf{u}})) - J_{\varepsilon}A_{j}(J_{\varepsilon}(\tilde{\textbf{u}}_{2}+\bar{\textbf{u}})))||_{0} |\partial_{x_{j}}J_{\varepsilon}\tilde{\textbf{u}}_{1}|_{\infty}
\end{equation*}

\begin{equation*}
+ |\textbf{P}[J_{\varepsilon}A_{j}(J_{\varepsilon}(\tilde{\textbf{u}}_{1}+\bar{\textbf{u}})) - J_{\varepsilon}A_{j}(J_{\varepsilon}(\tilde{\textbf{u}}_{2}+\bar{\textbf{u}}))]|_{\infty} ||D^{s}\partial_{x_{j}}J_{\varepsilon}\tilde{\textbf{u}}_{1}||_{0}
\end{equation*}

\begin{equation*}
+||\textbf{P}D^{s}(J_{\varepsilon}A_{j}(J_{\varepsilon}(\tilde{\textbf{u}}_{2}+\bar{\textbf{u}})))||_{0} |\partial_{x_{j}}J_{\varepsilon}(\tilde{\textbf{u}}_{1}-\tilde{\textbf{u}}_{2})|_{\infty}
\end{equation*}

\begin{equation*}
+|\textbf{P}J_{\varepsilon}A_{j}(J_{\varepsilon}(\tilde{\textbf{u}}_{2}+\bar{\textbf{u}}))|_{\infty} ||D^{s}\partial_{x_{j}}J_{\varepsilon}(\tilde{\textbf{u}}_{1}-\tilde{\textbf{u}}_{2})||_{0}\}
\end{equation*}

\begin{equation*}
= c_{S}\sum_{j=1}^{d}\Bigg\{\Bigg|\Bigg|\textbf{P} D^{s}\Bigg[\int_{0}^{1} \frac{d}{dr}(J_{\varepsilon}A_{j}(r J_{\varepsilon}(\tilde{\textbf{u}}_{1}+\bar{\textbf{u}}) + (1-r) J_{\varepsilon} (\tilde{\textbf{u}}_{2}+\bar{\textbf{u}}))) ~ dr\Bigg]\Bigg|\Bigg|_{0} |\partial_{x_{j}}J_{\varepsilon}\tilde{\textbf{u}}_{1}|_{\infty}
\end{equation*}

\begin{equation*}
+ \Bigg|\textbf{P}\int_{0}^{1} \frac{d}{dr}(J_{\varepsilon}A_{j}(r J_{\varepsilon}(\tilde{\textbf{u}}_{1}+\bar{\textbf{u}}) + (1-r) J_{\varepsilon} (\tilde{\textbf{u}}_{2}+\bar{\textbf{u}}))) ~ dr\Bigg|_{\infty}  ||D^{s}\partial_{x_{j}}J_{\varepsilon}\tilde{\textbf{u}}_{1}||_{0}\Bigg\}
\end{equation*}

\begin{equation*}
+||\textbf{P}D^{s}(J_{\varepsilon}A_{j}(J_{\varepsilon}(\tilde{\textbf{u}}_{2}+\bar{\textbf{u}})))||_{0} |\partial_{x_{j}}J_{\varepsilon}(\tilde{\textbf{u}}_{1}-\tilde{\textbf{u}}_{2})|_{\infty}
\end{equation*}

\begin{equation*}
+|\textbf{P}J_{\varepsilon}A_{j}(J_{\varepsilon}(\tilde{\textbf{u}}_{2}+\bar{\textbf{u}}))|_{\infty} ||D^{s}\partial_{x_{j}}J_{\varepsilon}(\tilde{\textbf{u}}_{1}-\tilde{\textbf{u}}_{2})||_{0}\}
\end{equation*}

\begin{equation*}
=c_{S}\sum_{j=1}^{d}\Bigg\{\Bigg|\Bigg|\textbf{P}D^{s}\Bigg[\int_{0}^{1} dr (J_{\varepsilon}A_{j}(r J_{\varepsilon}(\tilde{\textbf{u}}_{1}+\bar{\textbf{u}}) + (1-r) J_{\varepsilon} (\tilde{\textbf{u}}_{2}+\bar{\textbf{u}})))'   J_{\varepsilon}(\tilde{\textbf{u}}_{1}-\tilde{\textbf{u}}_{2})\Bigg]\Bigg|\Bigg|_{0} 
\end{equation*}

\begin{equation*}
|\partial_{x_{j}}J_{\varepsilon}\tilde{\textbf{u}}_{1}|_{\infty}
\end{equation*}

\begin{equation*}
+ \Bigg|\textbf{P} \int_{0}^{1} dr (J_{\varepsilon}A_{j}(r J_{\varepsilon}(\tilde{\textbf{u}}_{1}+\bar{\textbf{u}}) + (1-r) J_{\varepsilon} (\tilde{\textbf{u}}_{2}+\bar{\textbf{u}})))'  J_{\varepsilon}(\tilde{\textbf{u}}_{1}-\tilde{\textbf{u}}_{2}) \Bigg|_{\infty} 
\end{equation*}

\begin{equation*}
||D^{s}\partial_{x_{j}}J_{\varepsilon}\tilde{\textbf{u}}_{1}||_{0}\Bigg\}
\end{equation*}

\begin{equation*}
+||\textbf{P}D^{s}(J_{\varepsilon}A_{j}(J_{\varepsilon}(\tilde{\textbf{u}}_{2}+\bar{\textbf{u}})))||_{0} |\partial_{x_{j}}J_{\varepsilon}(\tilde{\textbf{u}}_{1}-\tilde{\textbf{u}}_{2})|_{\infty}
\end{equation*}

\begin{equation*}
+|\textbf{P}J_{\varepsilon}A_{j}(J_{\varepsilon}(\tilde{\textbf{u}}_{2}+\bar{\textbf{u}}))|_{\infty} ||D^{s}\partial_{x_{j}}J_{\varepsilon}(\tilde{\textbf{u}}_{1}-\tilde{\textbf{u}}_{2})||_{0}\}
\end{equation*}

and, then,

\begin{equation*}
\le c(c_{S}, ||\tilde{\textbf{u}}_{1}||_{m}, ||\tilde{\textbf{u}}_{2}||_{m}, \bar{\rho}, \varepsilon^{-1}) ||D^{s}(\tilde{\textbf{u}}_{1}-\tilde{\textbf{u}}_{2})||_{0},
\end{equation*}

\begin{equation}
\label{last_first_appr}
= c(c_{S}, ||\tilde{\textbf{u}}_{1}||_{m}, ||\tilde{\textbf{u}}_{2}||_{m}, \bar{\rho}, \varepsilon^{-1}) ||\tilde{\textbf{u}}_{1}-\tilde{\textbf{u}}_{2}||_{s},
\end{equation}

where, once again, last inequality follows from Moser estimates and properties of mollifiers, as we can see in the following remark.

\begin{remark}
\label{m_estimate}
Define $G(\tilde{\textbf{u}}_{2}):=\textbf{P}J_{\varepsilon} [A_{j}(J_{\varepsilon}(\tilde{\textbf{u}}_{2}+\bar{\textbf{u}}))-A_{j}(\bar{\textbf{u}})]$. It follows that $G(\underline{0})=0$. Applying Theorem C.12 in \cite{Benzoni}, we have that $$||G(\tilde{\textbf{u}}_{2})||_{s} = ||\textbf{P}J_{\varepsilon} [A_{j}(J_{\varepsilon}(\tilde{\textbf{u}}_{2}+\bar{\textbf{u}}))-A_{j}(\bar{\textbf{u}})]||_{s} \le  ||A_{j}(J_{\varepsilon}(\tilde{\textbf{u}}_{2}+\bar{\textbf{u}}))-A_{j}(\bar{\textbf{u}})||_{s}$$ $$\le c(|\tilde{\textbf{u}}_{2}|_{\infty})||\tilde{\textbf{u}}_{2}||_{s}.$$ Therefore
$$ ||\textbf{P} D^{s}(J_{\varepsilon}A_{j}(J_{\varepsilon}(\tilde{\textbf{u}}_{2}+\bar{\textbf{u}})))||_{0}$$

$$ =||\textbf{P} D^{s}(J_{\varepsilon}A_{j}(J_{\varepsilon}(\tilde{\textbf{u}}_{2}+\bar{\textbf{u}}))-\ J_{\varepsilon}A_{j}(\bar{\textbf{u}})) + \textbf{P} D^{s}( J_{\varepsilon}A_{j}(\bar{\textbf{u}}))||_{0} $$

$$ = ||\textbf{P} [D^{s}(J_{\varepsilon}A_{j}(J_{\varepsilon}(\tilde{\textbf{u}}_{2}+\bar{\textbf{u}}))- J_{\varepsilon}A_{j}(\bar{\textbf{u}}))]||_{0} $$

$$ \le c(|\tilde{\textbf{u}}_{2}|_{\infty})||\tilde{\textbf{u}}_{2}||_{s} \le c(c_{S}||\tilde{\textbf{u}}_{2}||_{s})||\tilde{\textbf{u}}_{2}||_{s}. $$
\end{remark}

Last inequality (\ref{last_first_appr}) implies that, for fixed $\varepsilon$, $F^{\varepsilon}$ is locally Lipschitz continuous on any open set

\begin{equation}
\mathcal{U}^{M}=\{\tilde{\textbf{u}}^{\varepsilon} \in V^{s} : ||\tilde{\textbf{u}}^{\varepsilon}||_{s} \le M \}.
\end{equation}

By using the Picard theorem, there exists a unique solution $\tilde{\textbf{u}}^{ \varepsilon} \in C^{1}([0, T_{\varepsilon}), \mathcal{U}^{M})$ for any $T_{\varepsilon} > 0$.

\par\bigskip

Now, we want to show that the time of existence $T_{\varepsilon}$ is bounded from below by any strictly positive time $T$ that is independent of $\varepsilon$. Roughly speaking, we need a uniform - in $\varepsilon$ - bound on $\tilde{\textbf{u}}^{\varepsilon}$ in the higher $H^{s}$-norm. First of all, system (\ref{P_approx_compact}), i.e.

$$\partial_{t}\tilde{\textbf{u}}^{\varepsilon}+\sum_{j=1}^{d}\textbf{P}J_{\varepsilon}A_{j}(J_{\varepsilon}(\tilde{\textbf{u}}^{\varepsilon}+\bar{\textbf{u}}))\partial_{x_{j}}J_{\varepsilon}\tilde{\textbf{u}}^{\varepsilon}=0,$$ by the commutation property of $J_{\varepsilon}$ and $\textbf{P}$ in Lemma \ref{commutation}, can be written as

\begin{equation}
\label{P_last}
\partial_{t}\tilde{\textbf{u}}^{\varepsilon}+\sum_{j=1}^{d}J_{\varepsilon}\textbf{P}A_{j}(J_{\varepsilon}(\tilde{\textbf{u}}^{\varepsilon}+\bar{\textbf{u}}))\partial_{x_{j}}\tilde{\textbf{u}}^{\varepsilon}=0.
\end{equation}

Let $\textbf{P}$ and $T_{iA}$ be respectively the pseudodifferential and the paradifferential operators associated to the regularized symbol $\textbf{P}(\xi)$ in (\ref{Projector}) and 

$$A:=A(\xi, J_{\varepsilon}(\tilde{\textbf{u}}^{\varepsilon}+\bar{\textbf{u}}))=\sum_{j=1}^{d}  A_{j}(J_{\varepsilon}(\tilde{\textbf{u}}^{\varepsilon}+\bar{\textbf{u}}))\xi_{j}$$

\begin{equation}
\label{A_symbol}
= \left( \begin{array}{ccccc}
\sum_{j=1}^{d} J_{\varepsilon}v_{j}\xi_{j} &0  & 0 & \cdots & 0\\
\xi_{1} f(J_{\varepsilon}(\tilde{\textbf{u}}^{\varepsilon}+\bar{\textbf{u}})) & \sum_{j=1}^{d} J_{\varepsilon}v_{j}\xi_{j} & 0 & \cdots & 0 \\
\xi_{2} f(J_{\varepsilon}(\tilde{\textbf{u}}^{\varepsilon}+\bar{\textbf{u}})) & 0 & \sum_{j=1}^{d}J_{\varepsilon}v_{j}\xi_{j} & \cdots & 0 \\
\cdots &  \cdots & \cdots &  \sum_{j=1}^{d}v_{j}\xi_{j} & \cdots \\
\xi_{d} f(J_{\varepsilon}(\tilde{\textbf{u}}^{\varepsilon}+\bar{\textbf{u}})) & 0 & 0 & \cdots & \sum_{j=1}^{d}J_{\varepsilon}v_{j}\xi_{j} \\
\end{array} \right),
\end{equation}
where matrices $A_{j}$ have been written in (\ref{A111}) and (\ref{Ajjj}). The paradifferential version of system (\ref{P_last}) is the following:
 
\begin{equation}
\label{Paradifferential_system}
\partial_{t}\tilde{\textbf{u}}^{\varepsilon}+J_{\varepsilon}\textbf{P} T_{iA} J_{\varepsilon}\tilde{\textbf{u}}^{\varepsilon} = - \Bigg[\sum_{j=1}^{d}J_{\varepsilon}\textbf{P}A_{j}(J_{\varepsilon}(\tilde{\textbf{u}}^{\varepsilon}+\bar{\textbf{u}}))\partial_{x_{j}}J_{\varepsilon} \tilde{\textbf{u}}^{\varepsilon} - J_{\varepsilon}\textbf{P} T_{iA}J_{\varepsilon} \tilde{\textbf{u}}^{\varepsilon} \Bigg].
\end{equation}

Following \emph{G. M\'etivier}, Lemma 7.2.3 in \cite{Metivier}, let $T_{iA_{j}}$ be the paradifferential operator associated to the symbol $A_{j}(J_{\varepsilon}(\tilde{\textbf{u}}^{\varepsilon}+\bar{\textbf{u}}))\xi_{j}$ for $j=1, \cdots, d$. Then

\begin{equation}
||[T_{iA_{j}} - A_{j}(J_{\varepsilon}(\tilde{\textbf{u}}^{\varepsilon}+\bar{\textbf{u}}))\partial_{x_{j}}]J_{\varepsilon}\tilde{\textbf{u}}^{\varepsilon}||_{s} \le c(||\tilde{\textbf{u}}^{\varepsilon}||_{s}, \bar{\rho}) ||J_{\varepsilon}\tilde{\textbf{u}}^{\varepsilon}||_{s}.
\end{equation}

Therefore, we can focus on the left hand side of equation (\ref{Paradifferential_system}), which is the following paradifferential equation:

\begin{equation}
\label{Paradifferential_reduced}
\partial_{t}\tilde{\textbf{u}}^{\varepsilon}+J_{\varepsilon}\textbf{P} T_{iA} J_{\varepsilon}\tilde{\textbf{u}}^{\varepsilon} = 0.
\end{equation}

At this point, we have to deal with the composition $\textbf{P} \circ T_{iA}$. Following \cite{Metivier} and, to be precise, referring to \emph{E. Grenier}, Proposition 1.10 in  \cite{Grenier}, it is known that the symbol associated to the composition is made by a sum, in the $\alpha$ multi-index, of terms like

$$D_{\xi}^{\alpha}\textbf{P}(\xi) D_{x}^{\alpha}A(\xi, J_{\varepsilon}(\tilde{\textbf{u}}^{\varepsilon}+\bar{\textbf{u}})),$$

where $D_{j}=\frac{1}{i}\partial_{j}$. Apart from $|\alpha|=0$, the others are terms of order less than or equal to 0. This means that the symbol related to the operator $\textbf{P} \circ T_{iA}$, can be written as

\begin{equation}
\label{symbol_composition}
\textbf{P}(\xi)A(\xi, J_{\varepsilon}(\tilde{\textbf{u}}^{\varepsilon}+\bar{\textbf{u}}))+R_{\alpha},
\end{equation}
 
where $R_{\alpha}$ is the remainder of order less than or equal to 0. By (\ref{A_symbol}) and (\ref{Projector}),

$$\textbf{P}A = \textbf{P}(\xi) A(\xi, J_{\varepsilon}(\tilde{\textbf{u}}^{\varepsilon}+\bar{\textbf{u}}))$$

\begin{equation}
\label{PA_symbol}
= \left( \begin{array}{ccccc}
\sum_{j=1}^{d}J_{\varepsilon}v_{j}\xi_{j} &0  & 0 & \cdots & 0\\
0 & \sum_{j=1}^{d}J_{\varepsilon}v_{j}\xi_{j} & 0 & \cdots & 0 \\
0 & 0 & \sum_{j=1}^{d}J_{\varepsilon}v_{j}\xi_{j} & \cdots & 0 \\
\cdots &  \cdots & \cdots &  \sum_{j=1}^{d}J_{\varepsilon}v_{j}\xi_{j} & \cdots \\
0 & 0 & 0 & \cdots & \sum_{j=1}^{d}J_{\varepsilon}v_{j}\xi_{j} \\
\end{array} \right).
\end{equation}
 
Now, let $\Lambda=(1-\Delta)^{\frac{1}{2}}$, where $\Delta$ is the Laplace operator. We are ready to estimate $||\tilde{\textbf{u}}^{\varepsilon}||_{s}$. Then, 

$$\frac{1}{2}\frac{d}{dt}||\tilde{\textbf{u}}^{\varepsilon}||_{s}^{2} = (\Lambda^{s} \partial_{t} \tilde{\textbf{u}}^{\varepsilon}, \Lambda^{s} \tilde{\textbf{u}}^{\varepsilon})_{0}=-Re(\Lambda^{s} J_{\varepsilon}\textbf{P}T_{iA}J_{\varepsilon} \tilde{\textbf{u}}^{\varepsilon}, \Lambda^{s} \tilde{\textbf{u}}^{\varepsilon})_{0}$$

\begin{equation}
=-Re(\Lambda^{s}\textbf{P}T_{iA}J_{\varepsilon} \tilde{\textbf{u}}^{\varepsilon}, \Lambda^{s} J_{\varepsilon}\tilde{\textbf{u}}^{\varepsilon})_{0},
\end{equation}

where last equality follows from the commutation of the mollifiers $J_{\varepsilon}$ with the Fourier multiplier $\Lambda^{s}$. Next, 

\begin{equation}
\label{Paradifferential_2}
(\Lambda^{s}\textbf{P}T_{iA}J_{\varepsilon} \tilde{\textbf{u}}^{\varepsilon}, \Lambda^{s} J_{\varepsilon}\tilde{\textbf{u}}^{\varepsilon})_{0}=(\textbf{P}T_{iA} \Lambda^{s}J_{\varepsilon}\tilde{\textbf{u}}^{\varepsilon}, \Lambda^{s} J_{\varepsilon}\tilde{\textbf{u}}^{\varepsilon})_{0}+([\Lambda^{s}, \textbf{P}T_{iA}] J_{\varepsilon}\tilde{\textbf{u}}^{\varepsilon}, \Lambda^{s} J_{\varepsilon}\tilde{\textbf{u}}^{\varepsilon})_{0}.
\end{equation}

By the fact that both symbols $\Lambda^{s}(\xi)$ and $\textbf{P}T_{iA}(\xi, J_{\varepsilon}(\tilde{\textbf{u}}^{\varepsilon}+\bar{\textbf{u}}))$ are diagonal, from the properties of the commutator operator (see \cite{Metivier}, \cite{Grenier}, \cite{Benzoni}, \cite{Taylor}) it follows that
$[\Lambda^{s}, \textbf{P}T_{iA}]$ is an operator of order less than or equal to $s$, therefore 

$$([\Lambda^{s}, \textbf{P}T_{iA}] J_{\varepsilon}\tilde{\textbf{u}}^{\varepsilon}, \Lambda^{s} J_{\varepsilon}\tilde{\textbf{u}}^{\varepsilon})_{0} \le ||[\Lambda^{s}, \textbf{P}T_{iA}] J_{\varepsilon}\tilde{\textbf{u}}^{\varepsilon}||_{0} || \Lambda^{s} J_{\varepsilon}\tilde{\textbf{u}}^{\varepsilon}||_{0} \le c(||\tilde{\textbf{u}}^{\varepsilon}||_{s}) ||\tilde{\textbf{u}}^{\varepsilon}||_{s}^{2}.$$

It remains to deal with the first term of the right hand side of (\ref{Paradifferential_2}), namely

\begin{equation}
Re(\textbf{P}T_{iA} \Lambda^{s}J_{\varepsilon}\tilde{\textbf{u}}^{\varepsilon}, \Lambda^{s} J_{\varepsilon}\tilde{\textbf{u}}^{\varepsilon})_{0}.
\end{equation}

Looking at the operator $\textbf{P}T_{iA}$, first we neglect the remainder terms of order less than or equal to 0 which, as we have discussed before, do not influence our estimates. Now, we notice that

$$Re \left(i{\textbf{P}}A \right)=0,$$ since the symbol (\ref{PA_symbol}) is a diagonal matrix symbol of first order. Following \cite{Metivier} and using the Cauchy-Schwartz inequality, this implies that

$$Re(\textbf{P}T_{iA} \Lambda^{s}J_{\varepsilon}\tilde{\textbf{u}}^{\varepsilon}, \Lambda^{s} J_{\varepsilon}\tilde{\textbf{u}}^{\varepsilon})_{0} \le c(||\tilde{\textbf{u}}^{\varepsilon}||_{s}, \bar{\rho})||\tilde{\textbf{u}}^{\varepsilon}||_{s}^{2}.$$

Putting it all together and returning to (\ref{Paradifferential_2}), we obtain the desired estimate

\begin{equation}
\label{last_estimate}
\frac{d}{dt}||\tilde{\textbf{u}}^{\varepsilon}||_{s}^{2} \le c(||\tilde{\textbf{u}}^{\varepsilon}||_{s}, \bar{\rho}) ||\tilde{\textbf{u}}^{\varepsilon}||_{s}^{2}.
\end{equation}

Let $T_{\varepsilon}$ be the maximum time of existence of the solution to system (\ref{P_approx_compact}). We want to show that there exists a time $T>0$, which is independent of $\varepsilon$, such that $T \le T_{\varepsilon}$ for every $\varepsilon>0$. From the statement of Theorem \ref{Local Existence of Approximating Solutions_P}, there exists a constant $M$ such that $\displaystyle||\tilde{\textbf{u}}^{\varepsilon}_{0}||_{s} \le M$. Fixed a constant value $\tilde{M}>M,$ let $T_{0}^{\varepsilon} \le T_{\varepsilon}$ be a positive time such that the smooth solution $\tilde{\textbf{u}}^{\varepsilon}$ verifies

\begin{equation}
\label{infinity_bound1}
sup_{~ 0 \le \tau \le T_{0}^{\varepsilon} ~} ||\tilde{\textbf{u}}^{\varepsilon}(\tau)||_{s} \le \tilde{M}.
\end{equation}

By (\ref{last_estimate}), we get
\begin{equation}
\label{prebound1}
||\tilde{\textbf{u}}^{\varepsilon}(t)||_{s} \le ||\tilde{\textbf{u}}^{\varepsilon}_{0}||_{s} e^{c(\tilde{M}, \bar{\rho})t}
\end{equation}

for $t \in [0, T_{0}^{\varepsilon}]$. Let $T,$ with $0 < T \le T_{0}^{\varepsilon},$ be such that

\begin{equation}
M e^{c(\tilde{M}, \bar{\rho})T} \le \tilde{M},
\end{equation}

namely

\begin{equation}
\label{time_bound1}
T \le \frac{log(\frac{\tilde{M}}{M})}{c(\tilde{M}, \bar{\rho})}.
\end{equation}

Since $M, \tilde{M}, \bar{\rho}$ are independent of the parameter $\varepsilon$, estimate (\ref{time_bound1}) implies that  $T$ is independent of $\varepsilon$ and the $\varepsilon$-dependent sequence $(\tilde{\textbf{u}}^{\varepsilon})_{\varepsilon \ge 0}$ is uniformly bounded provided that \eqref{time_bound1} holds.
\end{proof}

We also need a uniform bound for the time derivatives $(\partial_{t}\tilde{\textbf{u}}^{\varepsilon}(t))_{\varepsilon \ge 0}$, at least in the low norm $L^{2}$. This is immediately given by the uniform estimate 

\begin{equation}
\label{uniform_estimate}
||\tilde{\textbf{u}}^{\varepsilon}(t)||_{s} \le c(M, \tilde{M}) ~~~~~ \text{for} ~~ t \in [0,T]
\end{equation}

just proved, which, using equation (\ref{P_approx_compact}), implies

\begin{equation}
\label{time_estimate}
||\partial_{t}\tilde{\textbf{u}}^{\varepsilon}(t)||_{s} \le c(M, \tilde{M}) ||\tilde{\textbf{u}}^{\varepsilon}(t)||_{s} \le \tilde{c}(M, \tilde{M}) ~~~~~ \text{for} ~~ t \in [0,T].
\end{equation}

\subsection{Convergence to the solution to the Compressible-Incompressible System}
This subsection is devoted to the proof of the following theorem:

\begin{theorem}
\label{Convergence}
Let $\tilde{\textbf{u}}_{0}=(\tilde{\rho}_{0}, \tilde{v}_{0})^{T}$ be the translated initial data in (\ref{translated_initial_data}), $\tilde{\textbf{u}}_{0} \in H^{m}(\mathbb{R}^{d})$ with $m > [d/2]+1$ integer. There is a positive time $T$, such that there exists the unique solution $\tilde{\textbf{u}} \in C([0,T], H^{m}(\mathbb{R}^{d})) \cap C^{1}([0,T], H^{m-1}(\mathbb{R}^{d}))$ to the compressible-incompressible system (\ref{Goal_System_Translated}). The solution $\tilde{\textbf{u}}$ to equations (\ref{Goal_System_Translated}) is the limit of a subsequence of solutions to the approximating system (\ref{P_approx_compact}) with initial data (\ref{translated_initial_data}). The incompressible pressure term $P$ satisfies (\ref{Goal_System_Translated}), namely

\begin{equation}
\label{incompressible_pressure}
\partial_{t}{\textbf{u}}+\sum_{j=1}^{d} A_{j}(\textbf{u}+\bar{\textbf{u}})\partial_{x_{j}}\textbf{u} + (0, \nabla P)^{T} = 0.
\end{equation}
\end{theorem}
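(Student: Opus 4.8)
The plan is to run a compactness argument in the spirit of \cite{Bertozzi}, exploiting the two uniform-in-$\varepsilon$ bounds \eqref{uniform_estimate} and \eqref{time_estimate} already established for the approximating family $\tilde{\textbf{u}}^{\varepsilon}$ (taking $m=s$). These give that $(\tilde{\textbf{u}}^{\varepsilon})_{\varepsilon>0}$ is bounded in $L^{\infty}([0,T], H^{s})$ and equi-Lipschitz in $C([0,T], H^{s-1})$, so after extracting a subsequence we get a weak-$*$ limit $\tilde{\textbf{u}} \in L^{\infty}([0,T], H^{s})$. The heart of the matter is to promote this to strong convergence and then pass to the limit in the nonlinear terms.

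First I would show the family is Cauchy in the low norm. Taking two parameters $\varepsilon,\varepsilon'$, setting $w:=\tilde{\textbf{u}}^{\varepsilon}-\tilde{\textbf{u}}^{\varepsilon'}$, subtracting the two copies of \eqref{P_approx_compact} and running the $A_{0}$-weighted energy estimate exactly as in the uniqueness computation \eqref{uniqueness} (the symmetrizer is what tames the first-order transport term after integration by parts), the contributions in which only the argument of $A_{j}$ differs are controlled by $\|w\|_{0}$ through the Lipschitz dependence and the embedding $s>d/2+1$, whereas the terms carrying the mollifier difference $J_{\varepsilon}-J_{\varepsilon'}$ are $O(\max(\varepsilon,\varepsilon'))$ by the standard approximation properties of $J_{\varepsilon}$ applied to the uniformly $H^{s}$-bounded factors. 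A Gronwall argument then yields
\begin{equation*}
\sup_{0\le t\le T}\|\tilde{\textbf{u}}^{\varepsilon}(t)-\tilde{\textbf{u}}^{\varepsilon'}(t)\|_{0}^{2}\le C\,\max(\varepsilon,\varepsilon')\,e^{CT},
\end{equation*}
so $\tilde{\textbf{u}}^{\varepsilon}\to\tilde{\textbf{u}}$ strongly in $C([0,T], L^{2})$. Interpolating with the uniform $H^{s}$ bound, $\|w\|_{s'}\le\|w\|_{0}^{1-s'/s}\|w\|_{s}^{s'/s}$, upgrades this to $C([0,T], H^{s'})$ for every $s'<s$; choosing $d/2+1<s'<s$ keeps all products and the $C^{1}$ embedding under control.

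With strong $H^{s'}$-convergence in hand, I would pass to the limit in \eqref{P_approx_compact}: the coefficients $A_{j}(J_{\varepsilon}(\tilde{\textbf{u}}^{\varepsilon}+\bar{\textbf{u}}))$ converge to $A_{j}(\tilde{\textbf{u}}+\bar{\textbf{u}})$, the mollifiers converge strongly to the identity, and the Fourier multiplier $\textbf{P}$ is bounded on each $H^{k}$, so the limit solves the projected equation \eqref{Proj_Formulation_no_Symm}. To reach the claimed regularity $\tilde{\textbf{u}}\in C([0,T], H^{m})\cap C^{1}([0,T], H^{m-1})$, I would combine the weak-in-time continuity inherited from the weak-$*$ limit with the energy estimate \eqref{last_estimate} holding in the limit, so that $t\mapsto\|\tilde{\textbf{u}}(t)\|_{s}$ is both lower and upper semicontinuous and hence continuous; this gives strong continuity at the top level, and the $C^{1}([0,T], H^{m-1})$ regularity follows by reading $\partial_{t}\tilde{\textbf{u}}$ off the equation and bounding its right-hand side by the Moser product estimates.

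It remains to reconstruct the pressure and check uniqueness. Since $\tilde{\textbf{u}}$ satisfies \eqref{Proj_Formulation_no_Symm}, the field $\partial_{t}\tilde{\textbf{u}}+\sum_{j}A_{j}(\tilde{\textbf{u}}+\bar{\textbf{u}})\partial_{x_{j}}\tilde{\textbf{u}}$ lies in the kernel of $\textbf{P}=\mathrm{diag}(Id,\mathbb{P})$; by the block structure its density component vanishes and its velocity component is a pure gradient, which I define to be $-\nabla P$, equivalently recovering $P$ as the solution of the elliptic equation \eqref{Pressure_elliptic}. This furnishes a solution of the full system \eqref{incompressible_pressure}, and uniqueness is already supplied by the estimate \eqref{uniqueness}. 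I expect the main obstacle to be the pressure step together with the top-norm continuity: because the map $\rho\mapsto P$ defined through \eqref{Pressure_elliptic} does \emph{not} regularize (precisely the loss emphasized in the Introduction), one must verify carefully that the kernel-of-$\textbf{P}$ component is a genuine gradient of an $H^{m}$ function — not merely an $H^{m-1}$ distribution — which is exactly why only $H^{m}$, rather than the usual $H^{m+1}$, can be asserted for $P$.
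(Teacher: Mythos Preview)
Your proof is correct and reaches the same destination as the paper, but by a genuinely different route. The paper does not run a Cauchy-in-$L^{2}$ argument; instead it invokes the Lions--Aubin lemma on the pair of bounds \eqref{1bound1}--\eqref{2bound1} to extract a subsequence converging in $C([0,T],H^{m'}_{loc})$, then separately argues (via Banach--Alaoglu and a local-to-global patching) that the limit lies in $L^{\infty}([0,T],H^{m})$ and in $C([0,T],H^{m'})$, and finally passes to the limit in a weak formulation against divergence-free test functions. Your approach, modeled on \cite{Bertozzi}, has the advantage of giving convergence of the \emph{full} sequence directly (no subsequences until uniqueness is invoked) and of bypassing the local-to-global step entirely, at the cost of having to carry out the $A_{0}$-weighted difference estimate with the mollifiers and the projector present; here it is worth noting explicitly that the projector drops out of the energy pairing because $[A_{0},\textbf{P}]=0$ and $\textbf{P}w=w$, and that the reduced matrices \eqref{Ajjj} may be replaced by the original symmetrizable ones \eqref{Ajj} since $\nabla\cdot J_{\varepsilon}\tilde v^{\varepsilon}=J_{\varepsilon}\nabla\cdot\tilde v^{\varepsilon}=0$. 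The paper's compactness route is quicker to write but yields only subsequential convergence at first. For the top-norm continuity and the pressure reconstruction your sketch agrees with the paper: weak continuity in $H^{m}$ plus continuity of $t\mapsto\|\tilde{\textbf{u}}(t)\|_{m}$ (obtained from the energy inequality and time-reversibility) gives strong continuity, and the Hodge decomposition of the velocity component furnishes $\nabla P\in C([0,T],H^{m-1})$, matching the statement of the theorem.
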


\begin{proof}
In order to prove the convergence of a subsequence $(\tilde{\textbf{u}}^{\varepsilon})_{\varepsilon \ge 0}$ of solutions to system (\ref{P_approx_compact}) to solutions to (\ref{Goal_System_Translated}), let us consider the following uniform bounds that we have just proved in (\ref{uniform_estimate}) and (\ref{time_estimate}):
\begin{equation}
\label{1bound1}
sup_{~ 0 \le t \le T}||\tilde{\textbf{u}}^{\varepsilon}||_{m} \le M_{1}, 
\end{equation}
and
\begin{equation}
\label{2bound1}
sup_{~ 0 \le t \le T}||\partial_{t}\tilde{\textbf{u}}^{\varepsilon}||_{0} \le M_{2}.
\end{equation}
Since (\ref{1bound1}) and (\ref{2bound1}), the Lions-Aubin lemma (see \cite{Temam}) implies that there exists a subsequence - still denoted with $\tilde{\textbf{u}}^{\varepsilon}$- and a limit function $\tilde{\textbf{u}}^{\star}=(\tilde{\rho}^{\star}, \tilde{v}^{\star}),$ such that

\begin{equation}
\label{uniform_convergence1}
\tilde{\textbf{u}}^{\varepsilon} \rightarrow \tilde{\textbf{u}}^{\star}
\end{equation}

as $\varepsilon \rightarrow 0$ in $C([0,T], H^{m'}_{loc}(\mathbb{R}^{d}))$ with $m'<m$ and $\nabla \cdot \tilde{v}^{\star}=0$. Now, we are going to show that $\tilde{\textbf{u}}^{\star} \in L^{\infty}([0,T], H^{m}(\mathbb{R}^{d}))$. Since (\ref{1bound1}), the sequence $(\tilde{\textbf{u}}^{\varepsilon})_{\varepsilon \ge 0}$ is bounded in the reflexive Hilbert space $L^{2}([0,T], H^{m}(\mathbb{R}^{d}))$, therefore the Banach-Alaoglu theorem implies that there exists a subsequence $\tilde{\textbf{u}}^{\varepsilon'}$ and a function $\tilde{\textbf{u}}^{1}$ such that the subsequence weakly converges to $\tilde{\textbf{u}}^{1}$ in $L^{2}([0,T], H^{m}(\mathbb{R}^{d}))$. By using the uniform convergence of $\tilde{\textbf{u}}^{\varepsilon}$ to $\tilde{\textbf{u}}^{\star}$ in $C([0,T], H^{m'}_{loc}(\mathbb{R}^{d}))$, it can be easily seen that $\tilde{\textbf{u}}^{1} = \tilde{\textbf{u}}^{\star}$, i.e. $\tilde{\textbf{u}}^{\star} \in L^{2}([0,T], H^{m}(\mathbb{R}^{d}))$. 





Fix $t \in [0,T]$. Since the sequence $\tilde{\textbf{u}}^{\varepsilon}(\cdot, t)$ is uniformly bounded in $H^{m}$, it follows that there exists a subsequence $\tilde{\textbf{u}}^{\varepsilon''}(\cdot, t)$ and a limit distribution $\tilde{\textbf{u}}^{2}(\cdot, t)$ which is the weak limit of this subsequence in $H^{m}(\mathbb{R}^{d})$. Once again, the uniform convergence (\ref{uniform_convergence1}) of $\tilde{\textbf{u}}^{\varepsilon}$ implies that $\tilde{\textbf{u}}^{2}(\cdot, t)=\tilde{\textbf{u}}^{\star}(\cdot, t)$. Thus, for each $t \in [0,T]$, the limit function's norm $||\tilde{\textbf{u}}^{\star}(\cdot, t)||_{m}$ is bounded and then $\tilde{\textbf{u}}^{\star} \in L^{\infty}([0,T], H^{m}(\mathbb{R}^{d}))$.

\par\bigskip

Now, by using (\ref{1bound1}) and since $\tilde{\textbf{u}}^{\star} \in C([0,T], H^{m'}_{loc}(\mathbb{R}^{d}))$, we show that $\tilde{\textbf{u}}^{\star} \in C([0,T], H^{m'}(\mathbb{R}^{d}))$. In other words, we want to prove that, for all $\varepsilon > 0,$ there exists $\delta > 0$ such that, for any $h \le \delta$,

\begin{equation*}
||\tilde{\textbf{u}}^{\star}(\cdot, t+h)-\tilde{\textbf{u}}^{\star}(\cdot, t)||_{m'} \le \varepsilon.
\end{equation*}

To prove that, since $\tilde{\textbf{u}}^{\star} \in L^{\infty}([0,T],H^{m}(\mathbb{R}^{d}))$, it follows that for all $ \varepsilon > 0$ there exists $R=R(T)>0$ such that

\begin{equation*}
sup_{~ 0 \le t \le T ~} ||\tilde{\textbf{u}}^{\star}(\cdot, t)||_{H^{m}(\mathbb{R}^{d} / B_{R}(x_{0}))} < \frac{\varepsilon}{4},
\end{equation*}

where $B_{R}(x_{0})$ is the ball of center $x_{0} \in \mathbb{R}^{d}$ and radius $R>0$. Therefore, we have

\begin{equation*}
||\tilde{\textbf{u}}^{\star}(\cdot, t+h) - \tilde{\textbf{u}}^{\star}(\cdot, t)||_{H^{m}(\mathbb{R}^{d} / B_{R}(x_{0}))} \le \frac{\varepsilon}{2}.
\end{equation*}

Hence, we get

\begin{equation*}
||\tilde{\textbf{u}}^{\star}(\cdot, t+h)-\tilde{\textbf{u}}^{\star}(\cdot, t)||_{m'} 
\end{equation*}

\begin{equation*}
\le ||\tilde{\textbf{u}}^{\star}(\cdot, t+h)-\tilde{\textbf{u}}^{\star}(\cdot, t)||_{H^{m'}(B_{R}(x_{0}))} + ||\tilde{\textbf{u}}^{\star}(\cdot, t+h) - \tilde{\textbf{u}}^{\star}(\cdot, t)||_{H^{m'}(\mathbb{R}^{d} / B_{R}(x_{0}))} < \frac{\varepsilon}{2} + \frac{\varepsilon}{2}=\varepsilon,
\end{equation*}

where inequality 
\begin{equation*}
||\tilde{\textbf{u}}^{\star}(\cdot, t+h)-\tilde{\textbf{u}}^{\star}(\cdot, t)||_{H^{m'}(B_{R}(x_{0})))} \le \frac{\varepsilon}{2}
\end{equation*}

holds since $\tilde{\textbf{u}}^{\star} \in C([0,T], H^{m'}_{loc}(\mathbb{R}^{d}))$. Then, $\tilde{\textbf{u}}^{\star} \in C([0,T], H^{m'}(\mathbb{R}^{d}))$. Next, let $\psi \in C_{c}^{\infty}((0,T))$ and $\phi=(\rho, v)^{T}$ so that  $v \in \mathcal{V}^{0}=\{ v \in L^{2}(\mathbb{R}^{d}) ~|~ \nabla \cdot v = 0\}$ rapidly decreasing. Writing a weak formulation of system (\ref{P_approx_compact}), we have

\begin{equation}
\label{weak_solution1}
\int_{0}^{T} - \psi'(t) (\tilde{\textbf{u}}^{\varepsilon}, \phi)_{0}  ~ dt  + \sum_{j=1}^{d}\int_{0}^{T} \psi(t) (\textbf{P} J_{\varepsilon}A_{j}(J_{\varepsilon}(\tilde{\textbf{u}}^{\varepsilon}+\bar{\textbf{u}}))\partial_{x_{j}}J_{\varepsilon}\tilde{\textbf{u}}^{\varepsilon},  \phi)_{0}  ~ dt = 0.
\end{equation}

By using the uniform convergence of $\tilde{\textbf{u}}^{\varepsilon} \rightarrow \tilde{\textbf{u}}^{\star}$ in $C([0,T], H^{m'}_{loc}(\mathbb{R}^{d}))$ and recalling that $\tilde{\textbf{u}}^{\star} \in L^{\infty}([0,T], H^{m}(\mathbb{R}^{d}))$, we have   

\begin{equation}
(\tilde{\textbf{u}}^{\varepsilon}, \phi) \rightarrow (\tilde{\textbf{u}}^{\star}, \phi)_{0}.
\end{equation}

Now, for $j=1, \cdots, d$, it holds 
$$|(\textbf{P}J_{\varepsilon}A_{j}(J_{\varepsilon}(\tilde{\textbf{u}}^{\varepsilon}+\bar{\textbf{u}}))\partial_{x_{j}}J_{\varepsilon}\tilde{\textbf{u}}^{\varepsilon}-\textbf{P}A_{j}(\tilde{\textbf{u}}^{\star}+\bar{\textbf{u}})\partial_{x_{j}}\tilde{\textbf{u}}^{\star}, \phi)_{0}|$$

$$\le |(\textbf{P}J_{\varepsilon}A_{j}(J_{\varepsilon}(\tilde{\textbf{u}}^{\varepsilon}+\bar{\textbf{u}}))\partial_{x_{j}}(J_{\varepsilon}\tilde{\textbf{u}}^{\varepsilon}-\tilde{\textbf{u}}^{\star}), \phi)_{0}|$$

$$+|((\textbf{P}J_{\varepsilon}A_{j}(J_{\varepsilon}(\tilde{\textbf{u}}^{\varepsilon}+\bar{\textbf{u}}))-\textbf{P}A_{j}(\tilde{\textbf{u}}^{\star}+\bar{\textbf{u}}))\partial_{x_{j}}\tilde{\textbf{u}}^{\star}, \phi)_{0}|$$

\begin{equation}
\le M_{1} |(\partial_{x_{j}}(J_{\varepsilon}\tilde{\textbf{u}}^{\varepsilon}-\tilde{\textbf{u}}^{\star}), \phi)_{0}| + |(\textbf{P}(J_{\varepsilon} A_{j}(J_{\varepsilon}(\tilde{\textbf{u}}^{\varepsilon}+\bar{\textbf{u}}))-A_{j}(\tilde{\textbf{u}}^{\varepsilon}+\bar{\textbf{u}}))\partial_{x_{j}}\tilde{\textbf{u}}^{\star}, \phi)_{0}|.
\end{equation}

Therefore, again, since
$\tilde{\textbf{u}}^{\varepsilon} \rightarrow \tilde{\textbf{u}}^{\star}$ in $C([0,T], H^{m'}_{loc}(\mathbb{R}^{d}))$ and using that $\tilde{\textbf{u}}^{\star} \in L^{\infty}([0,T], H^{m}(\mathbb{R}^{d}))$, it follows the convergence

\begin{equation}
\label{L2convergence}
\int_{0}^{T} \psi(t) (\textbf{P} J_{\varepsilon}A_{j}(J_{\varepsilon}(\tilde{\textbf{u}}^{\varepsilon}+\bar{\textbf{u}}))\partial_{x_{j}}J_{\varepsilon}\tilde{\textbf{u}}^{\varepsilon},  \phi)_{0} \rightarrow\int_{0}^{T} \psi(t) (\textbf{P} A_{j}(\tilde{\textbf{u}}^{\star}+\bar{\textbf{u}})\partial_{x_{j}}\tilde{\textbf{u}}^{\star},  \phi)_{0}
\end{equation}

uniformly on $[0,T]$. This way, passing to the limit in (\ref{weak_solution1}), we obtain 

\begin{equation}
\label{weak_solution_limit1}
\int_{0}^{T} - \psi'(t) (\tilde{\textbf{u}}^{\star}, \phi)_{0}  ~ dt  + \sum_{j=1}^{d}\int_{0}^{T} \psi(t) (\textbf{P} A_{j}(\tilde{\textbf{u}}^{\star}+\bar{\textbf{u}})\partial_{x_{j}}\tilde{\textbf{u}}^{\star},  \phi)_{0}  ~ dt = 0.
\end{equation}

Convergence of (\ref{weak_solution1}) to (\ref{weak_solution_limit1}) and (\ref{2bound1}) imply, through a standard density argument and passing to a subsequence, the weak$^{*}$ convergence $\partial_{t}\tilde{\textbf{u}}^{\varepsilon} \rightharpoonup^{*}\partial_{t}\tilde{\textbf{u}}^{\star}$ in $L^{\infty}([0,T], L^{2}(\mathbb{R}^{d}))$. Now, the weak formulation (\ref{weak_solution_limit1}), in accordance with (\ref{Proj_Formulation_no_Symm}), yields

\begin{equation}
\label{distribution_solution_limit}
\partial_{t}\tilde{\textbf{u}}^{\star}+\sum_{j=1}^{d}\textbf{P}(A_{j}(\tilde{\textbf{u}}^{\star}+\bar{\textbf{u}})\partial_{x_{j}}\tilde{\textbf{u}}^{\star})=0
\end{equation}

in the sense of distributions. Thus, recalling that $\tilde{\textbf{u}}^{\star} \in L^{\infty}([0,T], H^{m}(\mathbb{R}^{d}))$ and using (\ref{distribution_solution_limit}), we obtain also that $\tilde{\textbf{u}}^{\star} \in Lip([0,T], H^{m-1}(\mathbb{R}^{d}))$. Moreover, from (\ref{distribution_solution_limit}) and the Hodge decomposition theorem, it follows that there exists $\nabla P^{\star} \in L^{\infty}([0,T], H^{m-1}(\mathbb{R}^{d}))$ such that

\begin{equation}
\label{distribution_solution_limit2}
\partial_{t}\tilde{\textbf{u}}^{\star} + \sum_{j=1}^{d}A_{j}(\tilde{\textbf{u}}^{\star}+\bar{\textbf{u}})\partial_{x_{j}}\tilde{\textbf{u}}^{\star}=(0, -\nabla P^{\star})^{T}.
\end{equation}

Equation (\ref{distribution_solution_limit2}) means that $\tilde{\textbf{u}}^{\star} \in L^{\infty}([0,T], H^{m}(\mathbb{R}^{d})) \cap  Lip([0,T], H^{m-1}(\mathbb{R}^{d}))$ is a weak solution to the compressible-incompressible system (\ref{Goal_System}).

Now, since $\tilde{\textbf{u}}^{\star} \in C([0,T], H^{m'}(\mathbb{R}^{d}))$, it follows that  $\tilde{\textbf{u}}^{\star} \in C_{w}([0,T], H^{m'}(\mathbb{R}^{d})),$ i.e. for all $ \varepsilon > 0,$ for all $\phi' \in H^{-m'}(\mathbb{R}^{d})$ there exists $\delta > 0$ such that 

\begin{equation*}
|(\tilde{\textbf{u}}^{\star}(\cdot, t+h)-\tilde{\textbf{u}}^{\star}(\cdot, t), \phi')_{-m', m'}| \le \frac{\varepsilon}{2}.
\end{equation*}

Furthermore, by the density of $H^{-m'} \subset H^{-m}$ ($m'<m$), it follows that for all $\varepsilon > 0$ and for all $\phi \in H^{-m}(\mathbb{R}^{d})$, there exists $\phi' \in H^{-m'}(\mathbb{R}^{d})$ such that 

\begin{equation*}
||\phi - \phi'||_{-m} \le \frac{\varepsilon}{4M_{1}}, 
\end{equation*}

where $M_{1}$ is the uniform bound in (\ref{1bound1}). Putting it all together, we get

\begin{equation*}
|(\tilde{\textbf{u}}^{\star}(\cdot, t+h)-\tilde{\textbf{u}}^{\star}(\cdot, t), \phi)_{-m, m}| 
\end{equation*}

\begin{equation*}
\le |(\tilde{\textbf{u}}^{\star}(\cdot, t+h)-\tilde{\textbf{u}}^{\star}(\cdot, t), \phi-\phi')_{-m, m}| + |(\tilde{\textbf{u}}^{\star}(\cdot, t+h)-\tilde{\textbf{u}}^{\star}(\cdot, t), \phi')_{-m', m'}|
\end{equation*}

\begin{equation*}
\le 2M_{1} ||\phi -\phi'||_{-m} +  |(\tilde{\textbf{u}}^{\star}(\cdot, t+h)-\tilde{\textbf{u}}^{\star}(\cdot, t), \phi')_{-m', m'}| \le \varepsilon,
\end{equation*}

which implies $\tilde{\textbf{u}}^{\star} \in C_{w}([0,T], H^{m}(\mathbb{R}^{d})).$ Finally, we have obtained that $\tilde{\textbf{u}}^{\star} \in L^{\infty}([0,T], H^{m}(\mathbb{R}^{d})) \cap Lip([0,T], H^{m-1}(\mathbb{R}^{d})) \cap C_{w}([0,T], H^{m}(\mathbb{R}^{d}))$.

\bigskip

Passing to a subsequence and recalling 

\begin{equation}
\partial_{t}\tilde{\textbf{u}}^{\star} \rightharpoonup^{*} \partial_{t}\tilde{\textbf{u}}^{\star} ~~ in ~~ L^{\infty}([0,T],L^{2}(\mathbb{R}^{d})),
\end{equation}

and (\ref{weak_solution1})-(\ref{weak_solution_limit1}), we get

\begin{equation}
\label{convergence}
\partial_{t}\tilde{\textbf{u}}^{\varepsilon} + \sum_{j=1}^{d}\textbf{P} J_{\varepsilon}A_{j}(\tilde{\textbf{u}}^{\varepsilon}+\bar{\textbf{u}})\partial_{x_{j}}J_{\varepsilon}\tilde{\textbf{u}}^{\varepsilon} \rightharpoonup^{*} \partial_{t}\tilde{\textbf{u}}^{\star} + \sum_{j=1}^{d}\textbf{P} A_{j}(\tilde{\textbf{u}}^{\star}+\bar{\textbf{u}})\partial_{x_{j}}\tilde{\textbf{u}}^{\star}.
\end{equation}

in  $\displaystyle L^{\infty}([0,T],L^{2}(\mathbb{R}^{d}))$, where $ \partial_{t}\tilde{\textbf{u}}^{\star}  \in C_{w}([0,T], H^{m-1}(\mathbb{R}^{d}))$ since (\ref{distribution_solution_limit}) holds and $ \tilde{\textbf{u}}^{\star}  \in C_{w}([0,T], H^{m}(\mathbb{R}^{d}))$. Recalling (\ref{1bound1}), we have

\begin{equation}
sup_{~0\le t \le T~} ||\nabla P^{\varepsilon}||_{0} \le c(M_{1}, M_{2}).
\end{equation}

Therefore, by using the Banach-Alaoglu theorem in $L^{\infty}([0,T], L^{2}(\mathbb{R}^{d}))$ and recalling (\ref{distribution_solution_limit2}), we get

\begin{equation}
\label{PressureConvergence}
{\nabla P^{\varepsilon}} \rightharpoonup^{*} \nabla P^{\star} ~~ in ~~ L^{\infty}([0,T], L^{2}(\mathbb{R}^{d})),
\end{equation}

with $P^{*} \in C_{w}([0,T], H^{m-1}(\mathbb{R}^{d}))$ since (\ref{distribution_solution_limit2}) and the regularity of $\partial_{t}\tilde{\textbf{u}}^{*}$ and $\tilde{\textbf{u}}^{\star}$.

The additional regularity $C([0,T], H^{m}(\mathbb{R}^{d})) \cap C^{1}([0,T], H^{m-1}(\mathbb{R}^{d}))$ can be achieved in a standard way, following \cite{Bertozzi}, from $\tilde{\textbf{u}}^{\star} \in  C_{w}([0,T], H^{m}(\mathbb{R}^{d}))$. We sketch the proof. First, it is sufficient to prove that $\tilde{\textbf{u}}^{\star} \in C([0,T], H^{m}(\mathbb{R}^{d}))$ since the regularity $C^{1}([0,T], H^{m-1}(\mathbb{R}^{d}))$ follows directly from equations (\ref{distribution_solution_limit2}). Moreover, we only need to prove  the continuity of $\tilde{\textbf{u}}^{\star}$ in the strong norm $||\cdot||_{m}$ at time $t=0$, in fact the same argument can be adapted to any other time $\tilde{T}$, $0 \le \tilde{T} \le T$. Furthermore, since system (\ref{distribution_solution_limit2}) is time reversible, we only need to prove the right continuity at time $t=0$ in the strong norm $||\cdot||_{m}$. Recalling (\ref{1bound1}) and passing to a subsequence, we have that

\begin{equation}
limsup_{\varepsilon \rightarrow 0} ||\tilde{\textbf{u}}^{\varepsilon}||_{m} \ge ||\tilde{\textbf{u}}^{\star}||_{m}.
\end{equation}

Moreover, from (\ref{prebound1}), it holds that

\begin{equation}
||\tilde{\textbf{u}}^{\varepsilon}||_{m} \le e^{c(\tilde{M}, \bar{\rho})t} ||\tilde{\textbf{u}}^{\varepsilon}_{0}||_{m}.
\end{equation}

This implies 

\begin{equation}
sup_{~ 0 \le t \le T ~} ||\tilde{\textbf{u}}^{\varepsilon}||_{m} - ||\tilde{\textbf{u}}^{\varepsilon}_{0}||_{m} \le e^{c(\tilde{M}, \bar{\rho})T} ||\tilde{\textbf{u}}^{\varepsilon}_{0}||_{m} - ||\tilde{\textbf{u}}^{\varepsilon}_{0}||_{m}.
\end{equation}

Because of that, from Theorem \ref{Local Existence of Approximating Solutions_P}, system (\ref{P_approx_compact}) is associated to the original initial data in (\ref{translated_initial_data}), for every $\varepsilon$ we have:

$$||\tilde{\textbf{u}}^{\varepsilon}_{0}||_{m} = ||\tilde{\textbf{u}}_{0}||_{m}.$$

Last estimates give

\begin{equation}
limsup_{~t \rightarrow 0^{+}~}  ||\tilde{\textbf{u}}^{\star}||_{m} \le ||\tilde{\textbf{u}}_{0}||_{m}.
\end{equation}

Now, since $\tilde{\textbf{u}}^{\star} \in C_{w}([0,T], H^{m}(\mathbb{R}^{d}))$, it follows that

\begin{equation}
liminf_{~t \rightarrow 0^{+}~} ||\tilde{\textbf{u}}^{\star}||_{m} \ge ||\tilde{\textbf{u}}_{0}||_{m}.
\end{equation}

In particular, 

\begin{equation}
lim_{~t \rightarrow 0^{+}~}||\tilde{\textbf{u}}^{\star}(t)||_{m} = ||\tilde{\textbf{u}}_{0}||_{m}.
\end{equation}

Then, the strong right continuity at $t=0$ is proved. 
\end{proof}

\begin{remark}
\label{remark1}
We point out that this kind of approximation does not work on system (\ref{density_dependent}), because of the fact that, we cannot eliminate the term $\frac{\nabla P}{\rho}$, which is not a gradient, by applying the projector operator.
\end{remark}

\section{The continuous projection approximation}
First of all, we want to point out that, although we use again the Leray projector, the idea inside this other kind of approximation is quite different from that discussed before. Roughly speaking, the main feature is that we will apply the projector operator in a completely different way, somehow treating it like a source term. Obviously, the first order symbol of this new approximation has to satisfies the hyperbolicity property. This is the reason why we go back to the original formulation (\ref{Goal_Compact}) of our problem and its matrices $A_{j}$ in (\ref{A11}) and (\ref{Ajjj}). Recalling the general setting of system (\ref{Goal_Compact}) presented in Section 2, notice that, in order to have real and semisimple eigenvalues, we have to make some assumptions on the function $f(\textbf{u})$. We are led to give a definition of \emph{admissible} scalar functions $f(\textbf{u})$.

\begin{definition}
\label{f}
The scalar function $f(\textbf{u})$ in (\ref{Goal_Compact}) is an \emph{admissible} function if 
\begin{itemize}
\item $f(\textbf{u})$ is strictly positive;
\item $\nabla_{v} f(\textbf{u}) = \alpha (\rho, |v|) v$, where $\alpha$ is a positive and continuous scalar function, only depending on the density $\rho$ and the norm $|v|$ of the velocity field.
\end{itemize}
An example of an admissible function is given by
\begin{equation}
f(\textbf{u})=\bar{f}+\beta(\rho, v^{2}), ~~~~ \beta(\rho_{0}, v_{0}^{2}) \ge 0,
\end{equation}
where $\bar{f}$ is a constant value and $\nabla_{v}\beta(\rho, v^{2})=2\partial_{v^{2}}\beta(\rho, v^{2}) v$, with $\partial_{v^{2}}\beta(\rho, v^{2})\ge 0$.
Last condition of this definition is essential to have uniformly bounded energy estimates for the approximation that we are going to define, as we will see.
\end{definition}
Now, we have the right framework for our problem and so, following \cite{Bertozzi} and \cite{Temam}, we look for a suitable approximation  to the compressible-incompressible system (\ref{Goal_System}), which is:

\begin{equation}
\label{Mollifiers_system1}
A_{0}(J_{\varepsilon}\textbf{u}^{\varepsilon})\partial_{t}\textbf{u}^{\varepsilon}+\sum_{j=1}^{d}J_{\varepsilon}A_{0}A_{j}(J_{\varepsilon}\textbf{u}^{\varepsilon})\partial_{x_{j}}J_{\varepsilon}\textbf{u}^{\varepsilon}+\left( \begin{array}{c}
0 \\
 \nabla P^{\varepsilon} \\
\end{array} \right)=0,
\end{equation}

where $\textbf{u}^{\varepsilon}:=(\rho^{\varepsilon}, v^{\varepsilon})$ and $v^{\varepsilon}$ is no more divergence free. Now, we choose the approximating sequence $\nabla P^{\varepsilon}$ so that, for each fixed $\varepsilon$, $\nabla P^{\varepsilon}$ is proportional to the gradient part of $v^{\varepsilon}$. Namely, using the Hodge decomposition theorem, we can set

\begin{equation}
\label{Hodge}
v^{\varepsilon}=\mathbb{P}v^{\varepsilon}+\varepsilon \nabla P^{\varepsilon}.
\end{equation}

This way, $$\nabla P^{\varepsilon}=\frac{(\mathbb{I}-\mathbb{P})v^{\varepsilon}}{\varepsilon}.$$ Then, the approximate equation becomes 

\begin{equation}
\label{Mollifiers_system}
A_{0}(J_{\varepsilon}\textbf{u}^{\varepsilon})\partial_{t}\textbf{u}^{\varepsilon}+\sum_{j=1}^{d}J_{\varepsilon}A_{0}A_{j}(J_{\varepsilon}\textbf{u}^{\varepsilon})\partial_{x_{j}}J_{\varepsilon}\textbf{u}^{\varepsilon}= - \left( \begin{array}{c}
0 \\
 \frac{(\mathbb{I}-\mathbb{P})v^{\varepsilon}}{\varepsilon} \\
\end{array} \right),
\end{equation}

with initial data 

\begin{equation}
\label{approximating_initial}
\rho_{0}^{\varepsilon}(x)=\rho_{0}(0,x), ~~~~  v_{0}^{\varepsilon}(0, x)=v_{0}(x)+\varepsilon v_{0}^{1}(x),
\end{equation}

where $\rho_{0}, v_{0}$ are the initial data in (\ref{initial}) - $v_{0} \in \mathcal{V}=\{ v \in H^{m}(\mathbb{R}^{d}) : \nabla \cdot v=0\}$ - and $v_{0}^{1}(x)$ is  chosen in $H^{m}(\mathbb{R}^{d})$. 

\begin{remark}
Similarly to the incompressible limit of the incompressible Euler equations in \cite{Majda}, \cite{Klainerman}, the "slightly compressible" form of the initial data in (\ref{approximating_initial}) guarantees the uniform - in $\varepsilon$- bound of the time derivative of $v^{\varepsilon}$ in the $L^{2}$-norm, as we will see later.
\end{remark}

\begin{remark}
\label{Transalted_system}
Taking into account the translation we have made in system (\ref{Goal_System_Translated}), we are going to translate also our approximating system (\ref{Mollifiers_system}) and the related initial data (\ref{approximating_initial}). Setting $\tilde{\textbf{u}}^{\varepsilon}=(\tilde{\rho}^{\varepsilon}, \tilde{v}^{\varepsilon})=(\rho^{\varepsilon}-\bar{\rho}, v^{\varepsilon}),$ the formulation of the approximation that we want to consider is the following:

\begin{equation}
\label{Goal_System_Translated_Approx1}
A_{0}(J_{\varepsilon}(\tilde{\textbf{u}}^{\varepsilon}+\bar{\textbf{u}})) \partial_{t}\tilde{\textbf{u}}^{\varepsilon}+\sum_{j=1}^{d}J_{\varepsilon} A_{0}A_{j}(J_{\varepsilon}(\tilde{\textbf{u}}^{\varepsilon}+\bar{\textbf{u}}))\partial_{x_{j}}J_{\varepsilon}\tilde{\textbf{u}}^{\varepsilon}= - \left( \begin{array}{c}
0 \\
\frac{(\mathbb{I}-\mathbb{P})v^{\varepsilon}}{\varepsilon} \\
\end{array} \right),
\end{equation}

with translated initial data

\begin{equation}
\label{Compact_translated_initial_data}
\tilde{\textbf{u}}^{\varepsilon}_{0}=(\tilde{\rho}^{\varepsilon}_{0}, \tilde{{v}}^{\varepsilon}_{0})^{T},
\end{equation}

where

\begin{equation}
\label{translated_initial_data_original}
\tilde{\rho}^{\varepsilon}_{0}(x)=\rho^{\varepsilon}_{0}(x)-\bar{\rho}, ~~~~~ \tilde{v}^{\varepsilon}_{0}(x)=v_{0}^{\varepsilon}(x),
\end{equation}
and $\rho^{\varepsilon}_{0}$ and $v^{\varepsilon}_{0}$ the approximating initial data in (\ref{approximating_initial}).
\end{remark}

Again, we use the Picard theorem on Banach spaces to get local solutions to the approximation (\ref{Goal_System_Translated_Approx1}) with initial data (\ref{Compact_translated_initial_data}). System (\ref{Goal_System_Translated_Approx1}) reduces to an ODE:

\begin{equation}
\label{ODE}
\partial_{t}\tilde{\textbf{u}}^{\varepsilon} = F^{\varepsilon}(\tilde{\textbf{u}}^{\varepsilon}),  ~~~~~~~~~~\tilde{\textbf{u}}^{\varepsilon}|_{t=0}=\tilde{\textbf{u}}^{\varepsilon}_{0}(x),
\end{equation}

where 

\begin{equation}
\label{Function_ODE}
F^{\varepsilon}(\tilde{\textbf{u}}^{\varepsilon}) = -\sum_{j=1}^{d}A_{0}^{-1}J_{\varepsilon}A_{0}A_{j}(J_{\varepsilon}(\tilde{\textbf{u}^{\varepsilon}}+\bar{\textbf{u}}))\partial_{x_{j}}J_{\varepsilon}\tilde{\textbf{u}}^{\varepsilon} - \left( \begin{array}{c}
0 \\
 \frac{(\mathbb{I}-\mathbb{P})v^{\varepsilon}}{\varepsilon} \\
\end{array} \right) =:F_{1}^{\varepsilon}(\tilde{\textbf{u}}^{\varepsilon}) - F_{2}^{\varepsilon}(\tilde{\textbf{u}}^{\varepsilon}).
\end{equation}

We want to prove the following theorem;

\begin{theorem}(Local existence of approximating solutions for the second type of approximation)
\label{Local Existence of Approximating Solutions}
Let $\tilde{\textbf{u}}^{\varepsilon}_{0}=(\tilde{\rho}_{0}^{\varepsilon}, \tilde{v}_{0}^{\varepsilon})^{T} \in H^{m}(\mathbb{R}^{d})$ as in (\ref{Compact_translated_initial_data}) and $m \in \mathbb{N}$, with $m > [d/2]+1$. Then, for any $\varepsilon > 0$, there exists a time $T$, independent of $\varepsilon$, such that system (\ref{Goal_System_Translated_Approx1}) has a unique solution $\tilde{\textbf{u}}^{\varepsilon}=(\tilde{\rho}^{\varepsilon}, \tilde{v}^{\varepsilon})^{T} \in C^{1}([0, T], H^{m}(\mathbb{R}^{d}))$.
\end{theorem}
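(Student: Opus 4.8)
The plan is to mirror the structure of the proof of Theorem \ref{Local Existence of Approximating Solutions_P}: first obtain, for each fixed $\varepsilon$, a local-in-time solution by the Picard Theorem \ref{Picard} on $H^{m}(\mathbb{R}^{d})$, and then upgrade the $\varepsilon$-dependent existence time $T_{\varepsilon}$ to a uniform one by an energy estimate in the high norm $\|\cdot\|_{m}$ whose constant does not depend on $\varepsilon$. Writing the reduced ODE (\ref{ODE}) as $\partial_{t}\tilde{\textbf{u}}^{\varepsilon}=F^{\varepsilon}(\tilde{\textbf{u}}^{\varepsilon})=F_{1}^{\varepsilon}(\tilde{\textbf{u}}^{\varepsilon})-F_{2}^{\varepsilon}(\tilde{\textbf{u}}^{\varepsilon})$ as in (\ref{Function_ODE}), I would first check that $F^{\varepsilon}\colon H^{m}\to H^{m}$ is locally Lipschitz. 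For $F_{1}^{\varepsilon}$ this is analogous to the computation leading to (\ref{last_first_appr}); the only new feature is the factor $A_{0}^{-1}=\mathrm{diag}(\rho/f,1,\dots,1)$, a smooth function of the mollified, strictly positive argument $J_{\varepsilon}(\tilde{\textbf{u}}^{\varepsilon}+\bar{\textbf{u}})$ and hence harmless for fixed $\varepsilon$ by the Moser estimates. The new term $F_{2}^{\varepsilon}=(0,\varepsilon^{-1}(\mathbb{I}-\mathbb{P})v^{\varepsilon})^{T}$ is linear, and since $\mathbb{I}-\mathbb{P}$ is an order-zero Fourier multiplier bounded on every $H^{m}$, it is globally Lipschitz with constant $\le \varepsilon^{-1}$, finite for fixed $\varepsilon$. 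Thus Picard yields a unique $\tilde{\textbf{u}}^{\varepsilon}\in C^{1}([0,T_{\varepsilon}),H^{m})$, with $\rho^{\varepsilon}$ staying positive so that $A_{0},A_{0}^{-1}$ remain well defined.

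The heart of the matter is the uniform bound. Setting $\Lambda=(1-\Delta)^{1/2}$ and using the symmetrizer (\ref{Symmetrizer1}), I would differentiate the weighted energy $E_{m}(t)=(A_{0}(J_{\varepsilon}(\tilde{\textbf{u}}^{\varepsilon}+\bar{\textbf{u}}))\Lambda^{m}\tilde{\textbf{u}}^{\varepsilon},\Lambda^{m}\tilde{\textbf{u}}^{\varepsilon})_{0}$, which is equivalent to $\|\tilde{\textbf{u}}^{\varepsilon}\|_{m}^{2}$ by Remarks \ref{assumptions} and \ref{Translation}. The convective part $\sum_{j}J_{\varepsilon}A_{0}A_{j}J_{\varepsilon}\partial_{x_{j}}$ is treated by the standard symmetric-hyperbolic machinery: $J_{\varepsilon}$ is self-adjoint and commutes with $\Lambda^{m}$, each $A_{0}A_{j}$ is symmetric (see (\ref{Symm_Aj})), so the top-order contribution is an exact $x$-derivative that integrates to zero, leaving Moser/commutator remainders bounded by $c(\|\tilde{\textbf{u}}^{\varepsilon}\|_{m},\bar{\rho})\|\tilde{\textbf{u}}^{\varepsilon}\|_{m}^{2}$. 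The crucial gain comes from the source: since $\mathbb{P}$ is the orthogonal Leray projector and commutes with $\Lambda^{m}$, its contribution to $\tfrac{d}{dt}E_{m}$ is $-\tfrac{2}{\varepsilon}\|\Lambda^{m}(\mathbb{I}-\mathbb{P})v^{\varepsilon}\|_{0}^{2}\le 0$, i.e. the singular factor $\varepsilon^{-1}$ produces a favourable (dissipative) sign and may simply be discarded.

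The step I expect to be the main obstacle is the term coming from the time derivative of the symmetrizer, $\int_{\mathbb{R}^{d}}\partial_{t}(f/\rho)\,|\Lambda^{m}\tilde{\rho}^{\varepsilon}|^{2}\,dx$: through $\partial_{t}f=\partial_{\rho}f\,\partial_{t}\rho+\nabla_{v}f\cdot\partial_{t}v^{\varepsilon}$ and the velocity equation, $\partial_{t}v^{\varepsilon}$ carries the singular summand $-\varepsilon^{-1}(\mathbb{I}-\mathbb{P})v^{\varepsilon}$, so this term threatens a residual $\varepsilon^{-1}$. This is exactly where the admissibility of $f$ in Definition \ref{f} enters: with $\nabla_{v}f=\alpha(\rho,|v|)v$, $\alpha\ge 0$, and the splitting $v^{\varepsilon}\cdot(\mathbb{I}-\mathbb{P})v^{\varepsilon}=|(\mathbb{I}-\mathbb{P})v^{\varepsilon}|^{2}+\mathbb{P}v^{\varepsilon}\cdot(\mathbb{I}-\mathbb{P})v^{\varepsilon}$, the quadratic part contributes $-\tfrac{1}{\varepsilon}\int\tfrac{\alpha}{\rho}|(\mathbb{I}-\mathbb{P})v^{\varepsilon}|^{2}|\Lambda^{m}\tilde{\rho}^{\varepsilon}|^{2}\le 0$ and is again dissipative. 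The remaining cross term is controlled by a uniform bound on $\|\partial_{t}v^{\varepsilon}\|$, equivalently on $\|\nabla P^{\varepsilon}\|=\varepsilon^{-1}\|(\mathbb{I}-\mathbb{P})v^{\varepsilon}\|$; to produce it I would run the same energy estimate on the time-differentiated system, whose source $-\varepsilon^{-1}(\mathbb{I}-\mathbb{P})\partial_{t}v^{\varepsilon}$ is once more dissipative, coupling $\|\tilde{\textbf{u}}^{\varepsilon}\|_{m}$ and $\|\partial_{t}\tilde{\textbf{u}}^{\varepsilon}\|_{m-1}$. The slightly compressible data (\ref{approximating_initial}), for which $\varepsilon^{-1}(\mathbb{I}-\mathbb{P})v_{0}^{\varepsilon}=(\mathbb{I}-\mathbb{P})v_{0}^{1}$ is bounded independently of $\varepsilon$ by (\ref{Hodge}), furnishes the uniform seed for this coupled Gronwall argument; guaranteeing that no factor $\varepsilon^{-1}$ survives is precisely the delicate point.

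Putting these estimates together yields $\tfrac{d}{dt}\|\tilde{\textbf{u}}^{\varepsilon}\|_{m}^{2}\le c(\|\tilde{\textbf{u}}^{\varepsilon}\|_{m},\bar{\rho})\|\tilde{\textbf{u}}^{\varepsilon}\|_{m}^{2}$ with $c$ independent of $\varepsilon$, the exact analogue of (\ref{last_estimate}). The continuation argument of (\ref{infinity_bound1})--(\ref{time_bound1}) then bounds $T_{\varepsilon}$ from below by a time $T>0$ depending only on $M,\tilde{M},\bar{\rho}$, hence independent of $\varepsilon$, and on $[0,T]$ the solution belongs to $C^{1}([0,T],H^{m}(\mathbb{R}^{d}))$, as claimed.
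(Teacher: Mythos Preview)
Your strategy coincides with the paper's: Picard on $H^{m}$ for fixed $\varepsilon$ (with $F_{2}^{\varepsilon}$ linear and bounded by $\varepsilon^{-1}$ on every $H^{m}$), followed by a symmetrizer--weighted $H^{m}$ energy estimate in which the singular source contributes $\varepsilon^{-1}\|(\mathbb{I}-\mathbb{P})D^{\alpha}v^{\varepsilon}\|_{0}^{2}\ge 0$ via Hodge orthogonality and is simply discarded, yielding a Gronwall inequality with $\varepsilon$--independent constant and hence a uniform lower bound on the existence time as in (\ref{infinity_bound})--(\ref{time_bound}). The paper uses multi-index derivatives $D^{\alpha}$, $|\alpha|\le m$, rather than $\Lambda^{m}$, but for integer $m$ this is cosmetic.

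The one substantive difference is your treatment of the $\partial_{t}A_{0}$ term. The paper simply asserts that $(\partial_{t}A_{0}\,D^{\alpha}J_{\varepsilon}\tilde{\textbf{u}}^{\varepsilon},D^{\alpha}J_{\varepsilon}\tilde{\textbf{u}}^{\varepsilon})_{0}$ is bounded by $c(|\tilde{\textbf{u}}^{\varepsilon}|_{\infty},|\nabla\tilde{\textbf{u}}^{\varepsilon}|_{\infty},\bar{\rho})\|D^{\alpha}\tilde{\textbf{u}}^{\varepsilon}\|_{0}^{2}$ and proceeds; the $L^{2}$ estimate on $\partial_{t}\tilde{\textbf{u}}^{\varepsilon}$ is carried out only \emph{after} the theorem, decoupled from the $H^{m}$ bound. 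You, by contrast, correctly observe that $\partial_{t}(f/\rho)$ contains $\rho^{-1}\nabla_{v}f\cdot\partial_{t}v^{\varepsilon}$, which through the velocity equation picks up the singular piece $-\varepsilon^{-1}\rho^{-1}\nabla_{v}f\cdot(\mathbb{I}-\mathbb{P})v^{\varepsilon}$, and you explain how the admissibility condition $\nabla_{v}f=\alpha\,v$ with $\alpha\ge 0$ in Definition~\ref{f} renders the quadratic part sign--definite, proposing a coupled Gronwall on $(\|\tilde{\textbf{u}}^{\varepsilon}\|_{m},\|\partial_{t}\tilde{\textbf{u}}^{\varepsilon}\|_{m-1})$ to absorb the residual cross term. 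This is precisely the role Definition~\ref{f} advertises (``essential to have uniformly bounded energy estimates\dots as we will see''), but the paper's proof leaves it implicit; your account is more complete on this point, at the cost of a slightly more involved closure argument.
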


\begin{proof}
First, we prove existence and uniqueness using again the Picard theorem. Then, we verify that the time of local existence $ T_{\varepsilon}$ can be bounded from below by a positive time $T$ independent of $\varepsilon$. In this context, the preliminary considerations that assure the applicability of the Picard Theorem  have been already discussed in Section 2, then we omit them. Regarding the Lipschitzianity of $F_{1}^{\varepsilon}$, compared to Section 2, here the only difference is represented by the multiplication by the symmetrizer $A_{0}$ and its inverse matrix $A_{0}^{-1}$ in (\ref{Function_ODE}). Looking at $F_{2}^{\varepsilon}$ in (\ref{Function_ODE}), from the properties of the pseudodifferential operators in \cite{Metivier}, \cite{Taylor} and \cite{Benzoni}, we have

\begin{equation*}
||F_{2}^{\varepsilon}(\tilde{\textbf{u}}_{1})-F_{2}^{\varepsilon}(\tilde{\textbf{u}}_{2})||_{m} = \left( \begin{array}{c}
0 \\
 \frac{||(\mathbb{I}-\mathbb{P})(\tilde{v}_{1}-\tilde{v}_{2})||_{m}}{\varepsilon} \\
\end{array} \right)
\le \left( \begin{array}{c}
0 \\
 \frac{1}{\varepsilon}||\tilde{v}_{1}-\tilde{v}_{2}||_{m} \\
\end{array} \right)
\le \frac{1}{\varepsilon}||\tilde{\textbf{u}}_{1}-\tilde{\textbf{u}}_{2}||_{m}.
\end{equation*}

Putting it all together, we get 
\begin{equation}
\label{Diff_Picard}
||F^{\varepsilon}(\tilde{\textbf{u}}_{1})-F^{\varepsilon}(\tilde{\textbf{u}}_{2})||_{m} \le c(||\tilde{\textbf{u}}_{1}||_{m}, ||\tilde{\textbf{u}}_{2}||_{m}, \bar{\rho}, \varepsilon^{-1}) ||\tilde{\textbf{u}}_{1}-\tilde{\textbf{u}}_{2}||_{m}.
\end{equation}

Thus, for fixed $\varepsilon$, $F^{\varepsilon}$ is locally Lipschitz continuous on any open set

\begin{equation}
\mathcal{U}^{M}=\{\tilde{\textbf{u}}^{\varepsilon} \in H^{m}(\mathbb{R}^{d}) : ||\tilde{\textbf{u}}^{\varepsilon}||_{m} \le M \}.
\end{equation}

Therefore, the Picard theorem implies that there exists a unique solution $\tilde{\textbf{u}}^{ \varepsilon} \in C^{1}([0, T_{\varepsilon}), \mathcal{U}^{M})$ for any $T_{\varepsilon} > 0$.

\par\bigskip
Following the path of the proof given in Section 2, we need a uniform bound on $\tilde{\textbf{u}}^{\varepsilon}$ in the higher $H^{m}$-norm. Since (\ref{Goal_System_Translated_Approx1}), we have

\begin{equation*}
\partial_{t}\tilde{\textbf{u}}^{\varepsilon} = -\sum_{j=1}^{d}A_{0}^{-1}J_{\varepsilon}A_{0}A_{j}(J_{\varepsilon}(\tilde{\textbf{u}^{\varepsilon}}+\bar{\textbf{u}}))\partial_{x_{j}}J_{\varepsilon}\tilde{\textbf{u}}^{\varepsilon} - \left( \begin{array}{c}
0 \\
 \frac{(\mathbb{I}-\mathbb{P})v^{\varepsilon}}{\varepsilon} \\
\end{array} \right)
\end{equation*}

Taking the $\alpha$-derivative for $|\alpha| \le m$, we get 
\begin{equation}
\partial_{t} D^{\alpha} \tilde{\textbf{u}}^{\varepsilon} + \sum_{j=1}^{d}A_{0}^{-1}J_{\varepsilon}A_{0}A_{j}(J_{\varepsilon}(\tilde{\textbf{u}^{\varepsilon}}+\bar{\textbf{u}}))\partial_{x_{j}}D^{\alpha}J_{\varepsilon}\tilde{\textbf{u}}^{\varepsilon}+\left( \begin{array}{c}
0 \\
 \frac{(\mathbb{I}-\mathbb{P})D^{\alpha}v^{\varepsilon}}{\varepsilon} \\
\end{array} \right)=F_{\alpha},
\end{equation}

where

\begin{equation}
F_{\alpha}=-\sum_{j=1}^{d}[D^{\alpha}(A_{0}^{-1}J_{\varepsilon}A_{0}A_{j}(J_{\varepsilon}(\tilde{\textbf{u}^{\varepsilon}}+\bar{\textbf{u}}))\partial_{x_{j}}J_{\varepsilon}\tilde{\textbf{u}}^{\varepsilon})-A_{0}^{-1}J_{\varepsilon}A_{0}A_{j}(J_{\varepsilon}(\tilde{\textbf{u}^{\varepsilon}}+\bar{\textbf{u}}))\partial_{x_{j}}D^{\alpha}J_{\varepsilon}\tilde{\textbf{u}}^{\varepsilon}].
\end{equation}

Multiplying by $D^{\alpha}\tilde{\textbf{u}}^{ \varepsilon}$ through the $A_{0}$ inner product $(A_{0} \cdot, \cdot)_{0}$, where $A_{0}$ is the symmetrizer in (\ref{Symmetrizer1}), and using the symmetric property of mollifiers, we obtain

\begin{equation*}
\frac{1}{2}\frac{d}{dt}(A_{0}(J_{\varepsilon}(\tilde{\textbf{u}}^{\varepsilon}+\bar{\textbf{u}}))D^{\alpha}J_{\varepsilon}\tilde{\textbf{u}}^{\varepsilon}, D^{\alpha}J_{\varepsilon}\tilde{\textbf{u}}^{\varepsilon})_{0} + \frac{1}{\varepsilon}((\mathbb{I}-\mathbb{P})D^{\alpha} v^{\varepsilon}, D^{\alpha} v^{\varepsilon})_{0}
\end{equation*}

\begin{equation*}
=\frac{1}{2}(\partial_{t}A_{0}(J_{\varepsilon}(\tilde{\textbf{u}}^{\varepsilon}+\bar{\textbf{u}}))D^{\alpha}J_{\varepsilon}\tilde{\textbf{u}}^{\varepsilon}, D^{\alpha}J_{\varepsilon}\tilde{\textbf{u}}^{\varepsilon})_{0} + \sum_{j=1}^{d}(\partial_{x_{j}}(A_{0}A_{j}(J_{\varepsilon}(\tilde{\textbf{u}}^{\varepsilon}+\bar{\textbf{u}}))D^{\alpha}J_{\varepsilon}\tilde{\textbf{u}}^{\varepsilon}, D^{\alpha}J_{\varepsilon}\tilde{\textbf{u}}^{\varepsilon})_{0}
\end{equation*}

\begin{equation}
+(A_{0}(J_{\varepsilon}(\tilde{\textbf{u}}^{\varepsilon}+\bar{\textbf{u}}))F_{\alpha}, D^{\alpha}\tilde{\textbf{u}}^{\varepsilon})_{0}.
\end{equation}

This implies that

\begin{equation*}
\frac{1}{2}\frac{d}{dt}(A_{0}(J_{\varepsilon}(\tilde{\textbf{u}}^{\varepsilon}+\bar{\textbf{u}}))D^{\alpha}J_{\varepsilon}\tilde{\textbf{u}}^{\varepsilon}, D^{\alpha}J_{\varepsilon}\tilde{\textbf{u}}^{\varepsilon})_{0} + \frac{1}{\varepsilon}((\mathbb{I}-\mathbb{P})D^{\alpha} v^{\varepsilon}, D^{\alpha} v^{\varepsilon})_{0}
\end{equation*}

\begin{equation}
\le c(|\tilde{\textbf{u}}^{\varepsilon}|_{\infty}, |\nabla \tilde{\textbf{u}}^{\varepsilon}|_{\infty}, \bar{\rho})||D^{\alpha}\tilde{\textbf{u}}^{\varepsilon}||_{0}^{2} + c(|\tilde{\textbf{u}}^{\varepsilon}|_{\infty})||F_{\alpha}||_{0}||D^{\alpha}\tilde{\textbf{u}}^{\varepsilon}||_{0}.
\end{equation}

Now

\begin{equation*}
||F_{\alpha}||_{0}=\Bigg|\Bigg|\sum_{j=1}^{d}[D^{\alpha}(A_{0}^{-1}J_{\varepsilon}A_{0}A_{j}(J_{\varepsilon}(\tilde{\textbf{u}^{\varepsilon}}+\bar{\textbf{u}}))\partial_{x_{j}}J_{\varepsilon}\tilde{\textbf{u}}^{\varepsilon})-A_{0}^{-1}J_{\varepsilon}A_{0}A_{j}(J_{\varepsilon}(\tilde{\textbf{u}^{\varepsilon}}+\bar{\textbf{u}}))\partial_{x_{j}}D^{\alpha}J_{\varepsilon}\tilde{\textbf{u}}^{\varepsilon}]\Bigg|\Bigg|_{0}
\end{equation*}

\begin{equation*}
\le \sum_{j=1}^{d}\{|D(A_{0}^{-1}J_{\varepsilon}A_{0}A_{j}(\tilde{\textbf{u}^{\varepsilon}}+\bar{\textbf{u}}))|_{\infty} ||D^{m-1}\partial_{x_{j}}J_{\varepsilon}\tilde{\textbf{u}}^{\varepsilon}||_{0} 
\end{equation*}

\begin{equation}
\label{quasi_last}
+ ||D^{m}(A_{0}^{-1}J_{\varepsilon}A_{0}A_{j}(\tilde{\textbf{u}^{\varepsilon}}+\bar{\textbf{u}}))||_{0}   |\partial_{x_{j}}J_{\varepsilon}\tilde{\textbf{u}}^{\varepsilon}|_{\infty}\}
\end{equation}

and then, using Remark \ref{m_estimate} in (\ref{quasi_last}), we get the inequality

\begin{equation}
\le c(|\tilde{\textbf{u}}^{\varepsilon}|_{\infty}, |\nabla \tilde{\textbf{u}}^{\varepsilon}|_{\infty}, \bar{\rho})||D^{m}\tilde{\textbf{u}}^{\varepsilon}||_{0}^{2}.
\end{equation}

Thus, we have

\begin{equation*}
\frac{1}{2}\frac{d}{dt}(A_{0}(J_{\varepsilon}(\tilde{\textbf{u}}^{\varepsilon}+\bar{\textbf{u}}))D^{\alpha}J_{\varepsilon}\tilde{\textbf{u}}^{\varepsilon}, D^{\alpha}J_{\varepsilon}\tilde{\textbf{u}}^{\varepsilon})_{0} + \frac{1}{\varepsilon}((\mathbb{I}-\mathbb{P})D^{\alpha} v^{\varepsilon}, D^{\alpha} v^{\varepsilon})_{0}
\end{equation*}

\begin{equation}
\le c(|\tilde{\textbf{u}}^{\varepsilon}|_{\infty}, |\nabla \tilde{\textbf{u}}^{\varepsilon}|_{\infty}, \bar{\rho})||D^{m}\tilde{\textbf{u}}^{\varepsilon}||_{0}^{2}.
\end{equation}

Using the Hodge decomposition theorem, we can obtain the main feature of our approximation, namely
\begin{equation}
\label{Hodge1}
((\mathbb{I}-\mathbb{P})D^{\alpha} v^{\varepsilon}, D^{\alpha} v^{\varepsilon})_{0} = ((\mathbb{I}-\mathbb{P})D^{\alpha} v^{\varepsilon}, D^{\alpha} ((\mathbb{I}-\mathbb{P})v^{\varepsilon}+\mathbb{P}v^{\varepsilon}))_{0}= ||(\mathbb{I}-\mathbb{P})v^{\varepsilon}||_{0}^{2}.
\end{equation}

Equalities in (\ref{Hodge1}) imply that the singular term $$\displaystyle \frac{1}{\varepsilon}((\mathbb{I}-\mathbb{P})D^{\alpha} v^{\varepsilon}, D^{\alpha} v^{\varepsilon})_{0}=\frac{||(\mathbb{I}-\mathbb{P})v^{\varepsilon}||_{0}^{2}}{\varepsilon}$$ is positive. Summing up to $|\alpha| \le m$, we obtain

\begin{equation}
\frac{d}{dt}\sum_{|\alpha| \le m} (A_{0}(J_{\varepsilon}(\tilde{\textbf{u}}^{\varepsilon}+\bar{\textbf{u}}))D^{\alpha}J_{\varepsilon}\tilde{\textbf{u}}^{\varepsilon}, D^{\alpha}J_{\varepsilon}\tilde{\textbf{u}}^{\varepsilon})_{0} \le c(|\tilde{\textbf{u}}^{\varepsilon}|_{\infty}, |\nabla \tilde{\textbf{u}}^{\varepsilon}|_{\infty}, \bar{\rho})||\tilde{\textbf{u}}^{\varepsilon}||_{m}^{2}.
\end{equation}

Since $\displaystyle A_{0}$ is positive definite and using the properties of mollifiers, last estimate gives

\begin{equation}
\frac{d}{dt}||\tilde{\textbf{u}}^{\varepsilon}||_{m}^{2} \le c(\tilde{\textbf{u}}^{\varepsilon}|_{\infty}, |\nabla \tilde{\textbf{u}}^{\varepsilon}|_{\infty}, \bar{\rho}) ||\tilde{\textbf{u}}^{\varepsilon}||_{m}^{2}.
\end{equation}

Using the Sobolev embedding theorem and defining  $\displaystyle y(t):=||\tilde{\textbf{u}}^{\varepsilon}||^{2}_{m},$ it yields

\begin{equation}
\label{Pre_Estimate}
y'(t) \le c(|\tilde{\textbf{u}}^{\varepsilon}|_{\infty}, |\nabla \tilde{\textbf{u}}^{\varepsilon}|_{\infty} , \bar{\rho}) y(t).
\end{equation}

Now, we want to show that there exists $T$ independent of $\varepsilon$, such that $ 0 < T \le T_{\varepsilon}$ for every $\varepsilon > 0$. It is known that there exists a constant $M$ such that  $\displaystyle y(0)=||\tilde{\textbf{u}}^{\varepsilon}_{0}||_{m} \le M$. Fix a positive constant $\tilde{M} > c_{S}M$ where, again, $c_{S}$ is the constant of the Sobolev embedding, and let $T_{0}^{\varepsilon}$ be such that $0 \le T_{0}^{\varepsilon} \le T_{\varepsilon}$ and

\begin{equation}
\label{infinity_bound}
sup_{~ 0 \le \tau \le T_{0}^{\varepsilon} ~} max \{|\tilde{\textbf{u}}^{\varepsilon}(\tau)|_{\infty}, |\nabla \tilde{\textbf{u}}^{\varepsilon}(\tau)|_{\infty} \} \le c_{S}~sup_{~ 0 \le \tau \le T_{0}^{\varepsilon} ~} ||\tilde{\textbf{u}}^{\varepsilon}(\tau)||_{m} \le \tilde{M}.
\end{equation}

Thus, estimate (\ref{Pre_Estimate}) yields 

\begin{equation}
y'(t) \le c(\tilde{M}, \bar{\rho}) y(t) ~~~~ \text{for} ~ t \in [0, T_{0}^{\varepsilon}],
\end{equation}
i.e. 
\begin{equation}
\label{prebound}
||\tilde{\textbf{u}}^{\varepsilon}(t)||_{m} \le ||\tilde{\textbf{u}}^{\varepsilon}_{0}||_{m} e^{c(\tilde{M}, \bar{\rho})t} \le c_{S}M e^{c(\tilde{M}, \bar{\rho})t} ~~~~ \text{for} ~ t \in [0, T_{0}^{\varepsilon}].
\end{equation}

Now, we find $T,$ with $0 < T \le T_{0}^{\varepsilon},$ such that

\begin{equation}
c_{S}M e^{c(\tilde{M}, \bar{\rho})T} \le \tilde{M}.
\end{equation}

This way, we get

\begin{equation}
\label{time_bound}
T \le \frac{log(\frac{\tilde{M}}{Mc_{S}})}{c(\tilde{M}, \bar{\rho})}.
\end{equation}

Now, the constants $M, \tilde{M}, c_{S}, \bar{\rho}$ are independent of the parameter $\varepsilon$, therefore, estimate (\ref{time_bound}) implies that the time $T$ of existence of solutions to problem (\ref{Goal_System_Translated_Approx1}) is independent of $\varepsilon$ and

\begin{equation}
\label{1bound}
||\tilde{\textbf{u}}^{\varepsilon}(t)||_{m} \le c(M, \tilde{M}) ~~~~ \text{for} ~ t \in [0, T],
\end{equation}

provided that $\displaystyle T \le \frac{log(\frac{\tilde{M}}{Mc_{S}})}{c(\tilde{M}, \bar{\rho})}$.
\end{proof}

To obtain a uniform bound for the time derivatives $(\partial_{t}\tilde{\textbf{u}}^{\varepsilon})(t)$, in the low norm $L^{2}$, we proceed in a similar way. We take the time derivative of equation (\ref{Goal_System_Translated_Approx1}) and let $$\textbf{w}^{\varepsilon}=\partial_{t}\tilde{\textbf{u}}^{\varepsilon}=(\partial_{t}\rho^{\varepsilon}, \partial_{t}v^{\varepsilon}).$$ Then, we have

\begin{equation*}
\partial_{t}\textbf{w}^{\varepsilon}+\sum_{j=1}^{d} A_{0}^{-1}J_{\varepsilon}A_{0}A_{j}(J_{\varepsilon}(\tilde{\textbf{u}}^{\varepsilon}+\bar{\textbf{u}}))\partial_{x_{j}}J_{\varepsilon}\textbf{w}^{\varepsilon}+\sum_{j=1}^{d} (A_{0}^{-1}J_{\varepsilon}A_{0}A_{j}(J_{\varepsilon}(\tilde{\textbf{u}}^{\varepsilon}+\bar{\textbf{u}})))'J_{\varepsilon}\textbf{w}^{\varepsilon} \partial_{x_{j}}J_{\varepsilon}\tilde{\textbf{u}}^{\varepsilon}
\end{equation*}

\begin{equation}
=- \left( \begin{array}{c}
0 \\
 \frac{(\mathbb{I}-\mathbb{P})\partial_{t} v^{\varepsilon}}{\varepsilon} \\
\end{array} \right).
\end{equation}

Taking the $(A_{0} \cdot, \cdot)_{0}$ inner product with ${\textbf{w}}^{\varepsilon}$, we have

\begin{equation*}
\frac{1}{2}\frac{d}{dt}(A_{0}(J_{\varepsilon}(\tilde{\textbf{u}}^{\varepsilon}+\bar{\textbf{u}}))\textbf{w}^{\varepsilon}, \textbf{w}^{\varepsilon})_{0} + \frac{||(\mathbb{I}-\mathbb{P})\partial_{t}v^{\varepsilon}||_{0}^{2}}{\varepsilon}
\end{equation*}

\begin{equation*}
=\frac{1}{2}((A_{0}(J_{\varepsilon}(\tilde{\textbf{u}}^{\varepsilon}+\bar{\textbf{u}})))' J_{\varepsilon}\textbf{w}^{\varepsilon} \cdot \textbf{w}^{\varepsilon}, \textbf{w}^{\varepsilon})_{0}+
\frac{1}{2}\sum_{j=1}^{d} ((A_{0}A_{j}(J_{\varepsilon}(\tilde{\textbf{u}}^{\varepsilon}+\bar{\textbf{u}})))'  \partial_{x_{j}}J_{\varepsilon}\tilde{\textbf{u}}^{\varepsilon} \cdot J_{\varepsilon}\textbf{w}^{\varepsilon}, J_{\varepsilon}\textbf{w}^{\varepsilon})_{0} 
\end{equation*}

\begin{equation}
+ \sum_{j=1}^{d} (A_{0}(A_{0}^{-1}J_{\varepsilon} A_{j}(J_{\varepsilon}(\tilde{\textbf{u}}^{\varepsilon}+\bar{\textbf{u}})))' J_{\varepsilon}\textbf{w}^{\varepsilon} \partial_{x_{j}}J_{\varepsilon}\tilde{\textbf{u}}^{\varepsilon}, \textbf{w}^{\varepsilon})_{0}.
\end{equation}

Therefore, we have

\begin{equation*}
\frac{d}{dt}(A_{0}(J_{\varepsilon}(\tilde{\textbf{u}}^{\varepsilon}+\bar{\textbf{u}}))\textbf{w}^{\varepsilon}, \textbf{w}^{\varepsilon})_{0} \le c(|\tilde{\textbf{u}}^{\varepsilon}|_{\infty}, |\nabla \tilde{\textbf{u}}^{\varepsilon}|_{\infty}, \bar{\rho})||\textbf{w}^{\varepsilon}||_{0}^{2}
\end{equation*}

Since (\ref{infinity_bound}) and (\ref{1bound}), it holds

\begin{equation*}
\frac{d}{dt}(A_{0}(J_{\varepsilon}(\tilde{\textbf{u}}^{\varepsilon}+\bar{\textbf{u}}))\textbf{w}^{\varepsilon}, \textbf{w}^{\varepsilon})_{0} \le c(M, \tilde{M}, \bar{\rho})||\textbf{w}^{\varepsilon}||_{0}^{2}.
\end{equation*}

Then, since the positivity of matrix $A_{0}$ and by using the Gronwall inequality and properties of mollifiers, it follows that

\begin{equation}
||\textbf{w}^{\varepsilon}(t)||_{0}^{2} \le ||\textbf{w}^{\varepsilon}(0)||_{0}^{2} e^{c(M, \tilde{M}, \bar{\rho})t},
\end{equation}

i.e.

\begin{equation}
\label{time_derivative_bound1}
||\partial_{t}\tilde{\textbf{u}}^{\varepsilon}(t)||_{0}^{2} \le ||\partial_{t}\tilde{\textbf{u}}^{\varepsilon}(0)||_{0}^{2} e^{c(M, \tilde{M}, \bar{\rho})t}.
\end{equation}

Therefore, $(\partial_{t}\tilde{\textbf{u}}^{\varepsilon})_{\varepsilon \ge 0}$ is uniformly bounded in $L^{2}(\mathbb{R}^{d})$ for each $t \in [0,T]$, provided that $||\textbf{w}^{\varepsilon}(0)||_{0}^{2} = ||\partial_{t}\tilde{\textbf{u}}^{\varepsilon}(0)||_{0}^{2}$ is uniformly bounded in $\varepsilon$. This is guaranteed by the structural conditions on the initial data in (\ref{approximating_initial}). In fact, since (\ref{Goal_System_Translated_Approx1}), we have

\begin{equation*}
\partial_{t}^{\varepsilon}\tilde{\textbf{u}}^{\varepsilon}(0)=-\sum_{j=1}^{d}A_{0}^{-1}J_{\varepsilon}A_{0}A_{j}(J_{\varepsilon}(\tilde{\textbf{u}}_{0}^{\varepsilon}+\bar{\textbf{u}}))\partial_{x_{j}}J_{\varepsilon}\tilde{\textbf{u}}_{0}^{\varepsilon} - \left( \begin{array}{c}
0 \\
 \frac{(\mathbb{I}-\mathbb{P})v^{\varepsilon}_{0}}{\varepsilon} \\
\end{array} \right).
\end{equation*}

Recalling the structural conditions on the initial data in (\ref{approximating_initial}), we notice that

\begin{equation*}
v^{\varepsilon}_{0}(x)=v_{0}(x)+\varepsilon v_{0}^{1}(x),
\end{equation*}

where $\nabla \cdot v_{0}(x) = 0$. This means that $\mathbb{P}v_{0} = v_{0}$, and then 

\begin{equation*}
\frac{1}{\varepsilon}(\mathbb{I}-\mathbb{P})v_{0} = 0. 
\end{equation*}

Therefore

\begin{equation*}
\partial_{t}\tilde{\textbf{u}}^{\varepsilon}(0)=-\sum_{j=1}^{d}A_{0}^{-1}J_{\varepsilon}A_{0}A_{j}(J_{\varepsilon}(\tilde{\textbf{u}}_{0}^{\varepsilon}+\bar{\textbf{u}}))\partial_{x_{j}}J_{\varepsilon}\tilde{\textbf{u}}_{0}^{\varepsilon} -(\mathbb{I}-\mathbb{P})v_{0}^{1}(x),
\end{equation*}

and, by the properties of pseudodifferential operators and the Sobolev embedding theorem, we have

\begin{equation}
\label{2bound}
||\partial_{t}\tilde{\textbf{u}}^{\varepsilon}(0)||_{0} \le c(||\tilde{\textbf{u}}_{0}^{\varepsilon}||_{m}) \le c(M, \tilde{M}).
\end{equation}

which is the desired bound.

\subsection{Convergence to the solution to the Compressible-Incompressible System}
We want to prove the following theorem:

\begin{theorem}
\label{Convergence1}
Let $\tilde{\textbf{u}}_{0}=(\tilde{\rho}_{0}, \tilde{v}_{0})^{T}$ be the translated initial data in (\ref{translated_initial_data}), $\tilde{\textbf{u}}_{0} \in H^{m}(\mathbb{R}^{d})$ with $m > [d/2]+1$. There is a positive time $T$, such that there exists the unique solution $\tilde{\textbf{u}} \in C([0,T], H^{m}(\mathbb{R}^{d})) \cap C^{1}([0,T], H^{m-1}(\mathbb{R}^{d}))$ to the compressible-incompressible system (\ref{Goal_System_Translated}). The solution $\tilde{\textbf{u}}$ to equations (\ref{Goal_System_Translated}) is the limit of a subsequence of solutions to the approximating system (\ref{Goal_System_Translated_Approx1}) with initial data (\ref{Compact_translated_initial_data}). The incompressible pressure term $P$ satisfies (\ref{Goal_System_Translated}), namely

\begin{equation}
\label{incompressible_pressure}
\partial_{t}{\textbf{u}}+\sum_{j=1}^{d} A_{j}(\textbf{u}+\bar{\textbf{u}})\partial_{x_{j}}\textbf{u} + (0, \nabla P)^{T} = 0.
\end{equation}
\end{theorem}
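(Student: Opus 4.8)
The plan is to reproduce, step by step, the compactness scheme already carried out in the proof of Theorem~\ref{Convergence}, the only genuinely new point being the treatment of the singular penalization term $\frac{1}{\varepsilon}(\mathbb{I}-\mathbb{P})v^{\varepsilon}$, which is what enforces incompressibility in the limit. First I would collect the two uniform bounds already proved for this approximation, namely $\sup_{0\le t\le T}\|\tilde{\textbf{u}}^{\varepsilon}\|_{m}\le M_{1}$ from (\ref{1bound}) and $\sup_{0\le t\le T}\|\partial_{t}\tilde{\textbf{u}}^{\varepsilon}\|_{0}\le M_{2}$ from (\ref{time_derivative_bound1})--(\ref{2bound}). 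As in Theorem~\ref{Convergence}, the Lions--Aubin lemma then extracts a subsequence with $\tilde{\textbf{u}}^{\varepsilon}\to\tilde{\textbf{u}}^{\star}=(\tilde{\rho}^{\star},\tilde{v}^{\star})$ in $C([0,T],H^{m'}_{loc}(\mathbb{R}^{d}))$ for $m'<m$, while the Banach--Alaoglu theorem together with the identification of the weak and strong-local limits places $\tilde{\textbf{u}}^{\star}$ in $L^{\infty}([0,T],H^{m}(\mathbb{R}^{d}))$ and upgrades its time regularity to $C([0,T],H^{m'}(\mathbb{R}^{d}))$ by the same localization-on-balls argument.

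The decisive new step is to recover $\nabla\cdot\tilde{v}^{\star}=0$, since here $v^{\varepsilon}$ is \emph{not} divergence free. I would exploit the fact that, by the computation leading to (\ref{Hodge1}), the energy identity carries on its left-hand side the \emph{positive} singular contribution $\frac{1}{\varepsilon}\|(\mathbb{I}-\mathbb{P})v^{\varepsilon}\|_{0}^{2}$. Integrating that differential inequality over $[0,T]$ and bounding the right-hand side by the uniform $H^{m}$ estimate yields $\int_{0}^{T}\|(\mathbb{I}-\mathbb{P})v^{\varepsilon}\|_{0}^{2}\,dt\le C\varepsilon$, so that $(\mathbb{I}-\mathbb{P})v^{\varepsilon}\to 0$ strongly in $L^{2}([0,T],L^{2}(\mathbb{R}^{d}))$. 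Combining this with the local strong convergence gives $(\mathbb{I}-\mathbb{P})\tilde{v}^{\star}=0$, i.e. $\nabla\cdot\tilde{v}^{\star}=0$, exactly the constraint required by Definition~\ref{classical_solution}.

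Next I would pass to the limit in the weak formulation of (\ref{Goal_System_Translated_Approx1}). The convective contributions are handled precisely as in Theorem~\ref{Convergence}: the mollifiers are removed by their consistency properties and Lemma~\ref{commutation}, the coefficient $A_{0}(J_{\varepsilon}(\tilde{\textbf{u}}^{\varepsilon}+\bar{\textbf{u}}))$ converges by the Moser-type estimate of Remark~\ref{m_estimate} and the local strong convergence, and the $H^{m}$ bound controls the remaining factors. For the penalization term I would write $\frac{1}{\varepsilon}(\mathbb{I}-\mathbb{P})v^{\varepsilon}=\nabla P^{\varepsilon}$ through the Hodge decomposition (\ref{Hodge}) and extract a uniform $L^{\infty}([0,T],L^{2}(\mathbb{R}^{d}))$ bound on $\nabla P^{\varepsilon}$ directly from the velocity equation, since $\partial_{t}\tilde{\textbf{u}}^{\varepsilon}$ and the convective terms are bounded in $L^{2}$; Banach--Alaoglu then produces $\nabla P^{\varepsilon}\rightharpoonup^{*}\nabla P^{\star}$. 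After multiplying through by $A_{0}^{-1}$ (well defined and positive by Remark~\ref{assumptions} and Remark~\ref{Translation}, and leaving the pressure term invariant as in (\ref{Reasonable})), the limit equation becomes exactly (\ref{distribution_solution_limit2}), so that $\tilde{\textbf{u}}^{\star}$ is a weak solution of (\ref{Goal_System_Translated}) with incompressible pressure $P^{\star}$.

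Finally, the additional regularity $\tilde{\textbf{u}}^{\star}\in C([0,T],H^{m}(\mathbb{R}^{d}))\cap C^{1}([0,T],H^{m-1}(\mathbb{R}^{d}))$ follows verbatim from the end of Theorem~\ref{Convergence}: one first obtains $\tilde{\textbf{u}}^{\star}\in C_{w}([0,T],H^{m}(\mathbb{R}^{d}))$ via the density $H^{-m'}\subset H^{-m}$, and then promotes weak to strong continuity at $t=0$, hence everywhere by the time reversibility of (\ref{distribution_solution_limit2}), through the norm sandwich $\limsup_{t\to0^{+}}\|\tilde{\textbf{u}}^{\star}\|_{m}\le\|\tilde{\textbf{u}}_{0}\|_{m}\le\liminf_{t\to0^{+}}\|\tilde{\textbf{u}}^{\star}\|_{m}$, with the $C^{1}$ regularity read off from the equation. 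Uniqueness is not reproved here: it is already furnished by the a priori estimate (\ref{uniqueness}) of the uniqueness subsection of Section~2. I expect the main obstacle to be the rigorous justification that the positive singular term is genuinely $O(\varepsilon)$ in $L^{2}_{t}L^{2}_{x}$ uniformly in $\varepsilon$, and hence forces $\nabla\cdot\tilde{v}^{\star}=0$ in the limit, since every other step is a faithful repetition of the first convergence proof.
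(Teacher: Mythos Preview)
Your proposal is correct and follows essentially the same compactness scheme as the paper, with one minor deviation worth noting. For the incompressibility constraint $\nabla\cdot\tilde v^{\star}=0$, you propose to integrate the energy inequality in time to obtain $\int_{0}^{T}\|(\mathbb{I}-\mathbb{P})v^{\varepsilon}\|_{0}^{2}\,dt\le C\varepsilon$. The paper instead reads the pointwise-in-time bound $\frac{1}{\varepsilon}\|(\mathbb{I}-\mathbb{P})v^{\varepsilon}\|_{0}\le c(M,\tilde M)$ directly from the approximating equation (\ref{Goal_System_Translated_Approx1}) using the already established bounds (\ref{1bound}) and (\ref{2bound}) on $\tilde{\textbf u}^{\varepsilon}$ and $\partial_{t}\tilde{\textbf u}^{\varepsilon}$; this is precisely the computation you yourself perform a few lines later to bound $\nabla P^{\varepsilon}$ in $L^{\infty}_{t}L^{2}_{x}$. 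In other words, the ``main obstacle'' you flag is a non-issue: the bound you derive for the pressure already gives $\|(\mathbb{I}-\mathbb{P})v^{\varepsilon}\|_{0}\le C\varepsilon$ uniformly in $t$, which is stronger than the $L^{2}_{t}L^{2}_{x}$ statement from integrating the energy and makes your two-step argument redundant. Apart from this streamlining, and the paper's device of testing the weak formulation against divergence-free $\phi$ so that the penalization term drops out by orthogonality (rather than carrying $\nabla P^{\varepsilon}$ through the limit as you do), the two arguments coincide.
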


\begin{proof}
The first part is exactly what we have done in the proof of convergence of the previous approximation. We omit that, then, taking a subsequence $(\tilde{\textbf{u}}^{\varepsilon})_{\varepsilon \ge 0}$ and its limit function $\tilde{\textbf{u}}^{\star}$, we start from some facts:

\item \begin{equation}
\label{uniform_convergence}
\tilde{\textbf{u}}^{\varepsilon} \rightarrow \tilde{\textbf{u}}^{\star} ~~ \text{as} ~~  \varepsilon \rightarrow 0 ~~\text{in} ~~C([0,T], H^{m'}_{loc}(\mathbb{R}^{d})) ~~ {m}' <  {m};
\end{equation}

 \begin{equation}
\label{L_2_weakly}
\tilde{\textbf{u}}^{\varepsilon} \rightharpoonup \tilde{\textbf{u}}^{\star} ~~ \text{as} ~~  \varepsilon \rightarrow 0 ~~\text{in} ~~L^{2}([0,T], H^{m}(\mathbb{R}^{d}));
\end{equation}

\begin{equation}
\label{L_infty}
\tilde{\textbf{u}}^{\star} \in L^{\infty}([0,T], H^{m}(\mathbb{R}^{d}));
\end{equation}

\begin{equation}
\label{uniform}
\tilde{\textbf{u}}^{\star} \in C([0,T], H^{m'}(\mathbb{R}^{d})) ~~ {m}' <  {m};
\end{equation}

\begin{equation}
\label{all_uniform}
\tilde{\textbf{u}}^{\star} \in C_{w}([0,T], H^{m}(\mathbb{R}^{d})).
\end{equation}

\par\bigskip

Now, recalling that

\begin{equation}
A_{0}(J_{\varepsilon}(\tilde{\textbf{u}}^{\varepsilon}+\bar{\textbf{u}}))\partial_{t}\tilde{\textbf{u}}^{\varepsilon}+\sum_{j=1}^{d}A_{0}A_{j}(J_{\varepsilon}(\tilde{\textbf{u}}^{\varepsilon}+\bar{\textbf{u}}))\partial_{x_{j}}J_{\varepsilon}\tilde{\textbf{u}}^{\varepsilon} = - \left( \begin{array}{c}
0 \\
 \frac{(\mathbb{I}-\mathbb{P})v^{\varepsilon}}{\varepsilon} \\
\end{array} \right),
\end{equation}

and looking at (\ref{1bound}) and (\ref{2bound}), we have

\begin{equation}
\label{PressureBound}
sup_{~0\le t \le T~}\frac{1}{\varepsilon} ||(\mathbb{I}-\mathbb{P})v^{\varepsilon}||_{0} \le c(M, \tilde{M}).
\end{equation}

This means that $||(\mathbb{I}-\mathbb{P})v^{\varepsilon}||_{0}\rightarrow 0 $ as $\varepsilon \rightarrow 0$ and, since $v^{\varepsilon} \rightarrow \tilde{v}^{\star}$ in $C([0,T], H^{m'}_{loc}(\mathbb{R}^{d}))$, it follows that $\mathbb{P}\tilde{v}^{\star} = \tilde{v}^{\star}$, namely

\begin{equation}
\label{DivNull}
\nabla \cdot \tilde{v}^{\star} = 0. 
\end{equation}

Next, let $\psi \in C_{c}^{\infty}((0,T))$ and $\phi=(\rho, v)^{T}$ so that  $v \in \mathcal{V}^{0}=\{ v \in L^{2}(\mathbb{R}^{d}) ~|~ \nabla \cdot v = 0\}$ rapidly decreasing. Writing a weak formulation of system (\ref{Goal_System_Translated_Approx1}), we have

\begin{equation*}
\int_{0}^{T} - \psi'(t) (\tilde{\textbf{u}}^{\varepsilon}, \phi)_{0}  ~ dt  + \sum_{j=1}^{d}\int_{0}^{T} \psi(t) (A_{0}^{-1}J_{\varepsilon}A_{0}A_{j}(J_{\varepsilon}(\tilde{\textbf{u}}^{\varepsilon}+\bar{\textbf{u}}))\partial_{x_{j}}J_{\varepsilon}\tilde{\textbf{u}}^{\varepsilon},  \phi)_{0}  ~ dt 
\end{equation*}

\begin{equation}
\label{weak_solution}
= - \int_{0}^{T} {\psi(t) }\Bigg(\frac{(\mathbb{I}-\mathbb{P})v^{\varepsilon}}{\varepsilon}, v\Bigg)_{0} ~ dt.
\end{equation}

Since $(\mathbb{I}-\mathbb{P})v^{\varepsilon}$ is a gradient for every $\varepsilon$, the right hand side of last equality vanishes.
Then, equation (\ref{weak_solution}) becomes

\begin{equation}
\label{weak_solution_projection}
\int_{0}^{T} - \psi'(t) (\tilde{\textbf{u}}^{\varepsilon}, \phi)_{0}  ~ dt  + \sum_{j=1}^{d}\int_{0}^{T} \psi(t) (A_{0}^{-1}J_{\varepsilon}A_{0}A_{j}(J_{\varepsilon}(\tilde{\textbf{u}}^{\varepsilon}+\bar{\textbf{u}}))\partial_{x_{j}}J_{\varepsilon}\tilde{\textbf{u}}^{\varepsilon},  \phi)_{0}  ~ dt = 0.
\end{equation}

By using the uniform convergence of $\tilde{\textbf{u}}^{\varepsilon} \rightarrow \tilde{\textbf{u}}^{\star}$ in $C([0,T], H^{m'}_{loc}(\mathbb{R}^{d}))$ and recalling that $\tilde{\textbf{u}}^{\star} \in L^{\infty}([0,T], H^{m}(\mathbb{R}^{d}))$, we have that  

\begin{equation}
(\tilde{\textbf{u}}^{\varepsilon}, \phi) \rightarrow (\tilde{\textbf{u}}^{\star}, \phi)_{0}.
\end{equation}

Now, for $j=1, \cdots, d$, it holds 
$$|(A_{0}^{-1}J_{\varepsilon}A_{0}A_{j}(J_{\varepsilon}(\tilde{\textbf{u}}^{\varepsilon}+\bar{\textbf{u}}))\partial_{x_{j}}J_{\varepsilon}\tilde{\textbf{u}}^{\varepsilon}-A_{j}(\tilde{\textbf{u}}^{\star}+\bar{\textbf{u}})\partial_{x_{j}}\tilde{\textbf{u}}^{\star}, \phi)_{0}|$$

$$\le |(A_{0}^{-1}J_{\varepsilon}A_{0}A_{j}(J_{\varepsilon}(\tilde{\textbf{u}}^{\varepsilon}+\bar{\textbf{u}}))\partial_{x_{j}}(J_{\varepsilon}\tilde{\textbf{u}}^{\varepsilon}-\tilde{\textbf{u}}^{\star}), \phi)_{0}|$$

$$+|((A_{0}^{-1}J_{\varepsilon}A_{0}A_{j}(J_{\varepsilon}(\tilde{\textbf{u}}^{\varepsilon}+\bar{\textbf{u}}))-A_{j}(\tilde{\textbf{u}}^{\star}+\bar{\textbf{u}}))\partial_{x_{j}}\tilde{\textbf{u}}^{\star}, \phi)_{0}|$$

\begin{equation}
\le M_{1} |(\partial_{x_{j}}(J_{\varepsilon}\tilde{\textbf{u}}^{\varepsilon}-\tilde{\textbf{u}}^{\star}), \phi)_{0}| + |((J_{\varepsilon} A_{j}(J_{\varepsilon}(\tilde{\textbf{u}}^{\varepsilon}+\bar{\textbf{u}}))-A_{j}(\tilde{\textbf{u}}^{\varepsilon}+\bar{\textbf{u}}))\partial_{x_{j}}\tilde{\textbf{u}}^{\star}, \phi)_{0}|.
\end{equation}

Therefore, again, since
$\tilde{\textbf{u}}^{\varepsilon} \rightarrow \tilde{\textbf{u}}^{\star}$ in $C([0,T], H^{m'}_{loc}(\mathbb{R}^{d}))$ and using that $\tilde{\textbf{u}}^{\star} \in L^{\infty}([0,T], H^{m}(\mathbb{R}^{d}))$, it follows the convergence

\begin{equation}
\label{L2convergence}
\int_{0}^{T} \psi(t) (A_{0}^{-1}J_{\varepsilon}A_{0}A_{j}(J_{\varepsilon}(\tilde{\textbf{u}}^{\varepsilon}+\bar{\textbf{u}}))\partial_{x_{j}}J_{\varepsilon}\tilde{\textbf{u}}^{\varepsilon},  \phi)_{0} \rightarrow\int_{0}^{T} \psi(t) (A_{j}(\tilde{\textbf{u}}^{\star}+\bar{\textbf{u}})\partial_{x_{j}}\tilde{\textbf{u}}^{\star},  \phi)_{0}
\end{equation}

uniformly on $[0,T]$. This way, passing to the limit in (\ref{weak_solution_projection}), we obtain 

\begin{equation}
\label{weak_solution_limit}
\int_{0}^{T} - \psi'(t) (\tilde{\textbf{u}}^{\star}, \phi)_{0}  ~ dt  + \sum_{j=1}^{d}\int_{0}^{T} \psi(t) (A_{j}(\tilde{\textbf{u}}^{\star}+\bar{\textbf{u}})\partial_{x_{j}}\tilde{\textbf{u}}^{\star},  \phi)_{0}  ~ dt = 0.
\end{equation}

Again, convergence of (\ref{weak_solution_projection}) to (\ref{weak_solution_limit}) and (\ref{2bound}) yields $\partial_{t}\tilde{\textbf{u}}^{\varepsilon} \rightharpoonup^{*}\partial_{t}\tilde{\textbf{u}}^{\star}$ in $L^{\infty}([0,T], L^{2}(\mathbb{R}^{d}))$ and equation (\ref{distribution_solution_limit}), i.e.

\begin{equation}
\partial_{t}\tilde{\textbf{u}}^{\star}+\sum_{j=1}^{d}\textbf{P}(A_{j}(\tilde{\textbf{u}}^{\star}+\bar{\textbf{u}})\partial_{x_{j}}\tilde{\textbf{u}}^{\star})=0.
\end{equation}

This way, we recover the adding regularity $\tilde{\textbf{u}}^{\star} \in Lip([0,T], H^{m-1}(\mathbb{R}^{d}))$ and the existence of $\nabla P^{\star} \in L^{\infty}([0,T], H^{m-1}(\mathbb{R}^{d}))$ such that

\begin{equation}
\label{distribution_solution_limit21}
\partial_{t}\tilde{\textbf{u}}^{\star} + \sum_{j=1}^{d}A_{j}(\tilde{\textbf{u}}^{\star}+\bar{\textbf{u}})\partial_{x_{j}}\tilde{\textbf{u}}^{\star}=(0, -\nabla P^{\star})^{T}.
\end{equation}

Thus, $\tilde{\textbf{u}}^{\star} \in L^{\infty}([0,T], H^{m}(\mathbb{R}^{d})) \cap Lip([0,T], H^{m-1}(\mathbb{R}^{d})) \cap C_{w}([0,T], H^{m}(\mathbb{R}^{d}))$ is a weak solution to the compressible-incompressible system (\ref{Goal_System}). Passing to a subsequence and recalling

\begin{equation}
\partial_{t}\tilde{\textbf{u}}^{\star} \rightharpoonup^{*} \partial_{t}\tilde{\textbf{u}}^{\star} ~~ in ~~ L^{\infty}([0,T],L^{2}(\mathbb{R}^{d})),
\end{equation}

and (\ref{weak_solution_projection})-(\ref{weak_solution_limit}), we get

\begin{equation}
\label{convergence1}
\partial_{t}\tilde{\textbf{u}}^{\varepsilon} + \sum_{j=1}^{d} A_{0}^{-1}J_{\varepsilon}A_{0}A_{j}(\tilde{\textbf{u}}^{\varepsilon}+\bar{\textbf{u}})\partial_{x_{j}}J_{\varepsilon}\tilde{\textbf{u}}^{\varepsilon} \rightharpoonup^{*} \partial_{t}\tilde{\textbf{u}}^{\star} + \sum_{j=1}^{d} A_{j}(\tilde{\textbf{u}}^{\star}+\bar{\textbf{u}})\partial_{x_{j}}\tilde{\textbf{u}}^{\star}.
\end{equation}

in  $\displaystyle L^{\infty}([0,T],L^{2}(\mathbb{R}^{d}))$, where $ \partial_{t}\tilde{\textbf{u}}^{\star}  \in C_{w}([0,T], H^{m-1}(\mathbb{R}^{d}))$ since (\ref{distribution_solution_limit}) holds and $ \tilde{\textbf{u}}^{\star}  \in C_{w}([0,T], H^{m}(\mathbb{R}^{d}))$. Recalling
\begin{equation}
-\frac{1}{\varepsilon}(\mathbb{I}-\mathbb{P})v^{\varepsilon} = - {\nabla P^{\varepsilon}},
\end{equation}

and by (\ref{PressureBound}), we have

\begin{equation}
sup_{~0\le t \le T~} ||\nabla P^{\varepsilon}||_{0} \le c(M_{1}, M_{2}).
\end{equation}

Therefore, by using the Banach-Alaoglu theorem in $L^{\infty}([0,T], L^{2}(\mathbb{R}^{d}))$ and recalling (\ref{distribution_solution_limit2}), we get

\begin{equation}
\label{PressureConvergence}
{\nabla P^{\varepsilon}} \rightharpoonup^{*} \nabla P^{\star} ~~ in ~~ L^{\infty}([0,T], L^{2}(\mathbb{R}^{d})),
\end{equation}

with $P^{*} \in C_{w}([0,T], H^{m-1}(\mathbb{R}^{d}))$ since (\ref{distribution_solution_limit}) and the regularity of $\partial_{t}\tilde{\textbf{u}}^{*}$ and $\tilde{\textbf{u}}^{\star}$.

Applying exaclty - apart from a slight modification related to the different approximating initial data - the same proof of last part of Section 2, we get $ \tilde{\textbf{u}}^{\star} \in C([0,T], H^{m}(\mathbb{R}^{d})) \cap C^{1}([0,T], H^{m-1}(\mathbb{R}^{d}))$. 
\end{proof}

\begin{remark}
\label{remark3}
If we approximate the density-dependent incompressible Euler equations (\ref{density_dependent}) by this method, for instance, in the two-dimensional case, we have the following $\varepsilon$-system:
\begin{equation}
\partial_{t}\textbf{u}^{\varepsilon}+
\left( \begin{array}{ccc}
v^{\varepsilon}_{1} & 0 & 0 \\
0 & v^{\varepsilon}_{1} & 0 \\
0 & 0 & v^{\varepsilon}_{1} 
\end{array} \right)\partial_{x}\textbf{u}^{\varepsilon} + \left( \begin{array}{ccc}
v^{\varepsilon}_{2} & 0 & 0 \\
0 & v^{\varepsilon}_{2} & 0 \\
0 & 0 & v^{\varepsilon}_{2} 
\end{array} \right)\partial_{y}\textbf{u}^{\varepsilon}+ \Bigg(0, \frac{(\mathbb{I}-\mathbb{P})v^{\varepsilon}}{\varepsilon \rho^{\varepsilon}}\Bigg)^{T}=0,
\end{equation}
where $\textbf{u}^{\varepsilon}:=(\rho^{\varepsilon}, v^{\varepsilon})$. Taking the $\alpha$-derivative in order to get energy estimates, we notice that we have no more positivity of the singular term in $\varepsilon$, therefore this method does not work in a simple way on (\ref{density_dependent}).
\end{remark}

\section{Artificial Compressibility Method}
Following \cite{Temam}, we consider an other kind of approximation of system (\ref{Goal_System}), based on a family of $\varepsilon$-dependent perturbed system, which, in order to approximate the divergence constraint $\nabla \cdot v = 0$, contains the following artificial equation for the pressure term $P^{\varepsilon}$:

\begin{equation}
\label{Articial_state_equation1}
\varepsilon^{2} \partial_{t}P^{\varepsilon} + \nabla \cdot v^{\varepsilon} = 0.
\end{equation}

We consider the artificial state equation 

\begin{equation}
\label{P_expansion}
P^{\varepsilon}=P_{0}+\varepsilon \tilde{P}^{\varepsilon},
\end{equation}

where $P_{0}$ is constant. Without loss of generality, we take $P_{0}=1$. Setting $\textbf{u}^{\varepsilon}:=(\rho^{\varepsilon}, \tilde{P}^{\varepsilon}, v^{\varepsilon})$, the approximating system becomes:

\begin{equation}
\label{Goal_System_Incompressible}
\begin{cases}
& \partial_{t}\rho^{\varepsilon} + \nabla \cdot (\rho^{\varepsilon} v^{\varepsilon}) = 0, \\
& \partial_{t}\tilde{P}^{\varepsilon} + \frac{\nabla \cdot v^{\varepsilon}}{\varepsilon} = 0,\\
& \partial_{t}{v^{\varepsilon}}+{v^{\varepsilon}} \cdot \nabla {v^{\varepsilon}} + f(\rho^{\varepsilon}, v^{\varepsilon}) \nabla \rho^{\varepsilon} + \frac{\nabla {\tilde{P}^{\varepsilon}}}{\varepsilon}=0, 
\end{cases}
\end{equation}

with the following initial data as in (\ref{approximating_initial}):

\begin{equation}
\rho_{0}^{\varepsilon}(x)=\rho_{0}(0,x), ~~~~  v_{0}^{\varepsilon}(0, x)=v_{0}(x)+\varepsilon v_{0}^{1}(x),
\end{equation}

where $\rho_{0}, v_{0}$ are the initial data (\ref{initial}) of the original problem (\ref{Goal_System}).

\begin{remark}
To simplify the notation, we are skipping the translation of the density variable $\rho$, which is required also for system (\ref{density_dependent}) in its compact form, because of the fact that it is the same argument previously discussed, see Remark \ref{Translation} and Remark \ref{Transalted_system}.
\end{remark}

 Again, we can write system (\ref{Goal_System_Incompressible}) in the compact form:

\begin{equation}
\label{Compact_Incompressible}
\partial_{t}\textbf{u}^{\varepsilon} + \sum_{j=1}^{d} A_{j}(\textbf{u}^{\varepsilon})\partial_{x_{j}}\textbf{u}^{\varepsilon} = 0,
\end{equation}

with initial data

\begin{equation}
\label{Compact_incompressible_initial_data}
\textbf{u}^{\varepsilon}_{0}=(\rho^{\varepsilon}_{0}, \tilde{P}^{\varepsilon}_{0}, v^{\varepsilon}_{0})^{T},
\end{equation}

where the function $\tilde{P}_{0}$ is not given with problem (\ref{Goal_System}), but it is arbitrarily chosen, provided that $\tilde{P}_{0} \in H^{m}(\mathbb{R}^{d})$ and, as in (\ref{approximating_initial}), $$\rho^{\varepsilon}_{0}=\rho_{0},~~ ~ ~~ v^{\varepsilon}_{0}=v_{0}+\varepsilon v_{0}^{1},$$ with $\rho_{0}, v_{0}$ the original initial data in (\ref{initial}). The matrices $A_{j}(\textbf{u}^{\varepsilon})$ have the following structural form:

\begin{equation}
\label{Aj_incompressible}
A_{j}(\textbf{u}^{\varepsilon})=\tilde{A}_{j}(\textbf{u}^{\varepsilon}) + \frac{{A}^{0}_{j}}{\varepsilon}
\end{equation}

for $j=1, \cdots, d$. In the $2$-dimensional case, we have

\begin{equation}
\label{A1}
A_{1}(\textbf{u}^{\varepsilon}) = \tilde{A}_{1}(\textbf{u}^{\varepsilon}) + \frac{{A}^{0}_{1}}{\varepsilon} =  \left( \begin{array}{cccc}
v^{\varepsilon}_{1} & 0 & \rho^{\varepsilon} & 0 \\
0 & 0 & 0 & 0  \\
f(\textbf{u}^{\varepsilon}) & 0 & v^{\varepsilon}_{1} & 0 \\
0 & 0 & 0 & v^{\varepsilon}_{1} 
\end{array} \right) +  \left( \begin{array}{cccc}
0 & 0 & 0 & 0 \\
0 & 0 & \frac{1}{\varepsilon} & 0  \\
0 & \frac{1}{\varepsilon} & 0 & 0 \\
0 & 0 & 0 & 0
\end{array} \right),
\end{equation}

\begin{equation}
\label{A2}
A_{2}(\textbf{u}^{\varepsilon}) = \tilde{A}_{2}(\textbf{u}^{\varepsilon}) + \frac{{A}^{0}_{2}}{\varepsilon} =  \left( \begin{array}{cccc}
v^{\varepsilon}_{2} & 0 & 0 & \rho^{\varepsilon} \\
0 & 0 & 0 & 0  \\
0 & 0 & v^{\varepsilon}_{2} & 0\\
f(\textbf{u}^{\varepsilon}) & 0 & 0 & v^{\varepsilon}_{2}
\end{array} \right) +  \left(\begin{array}{cccc}
0 & 0 & 0 & 0 \\
0 & 0 & 0 & \frac{1}{\varepsilon}  \\
0 & 0 & 0 & 0 \\
0 & \frac{1}{\varepsilon} & 0 & 0
\end{array} \right)
\end{equation}

and, in the general $d$-case, for $j=1, \cdots, d$

$$A_{j}(\textbf{u}^{\varepsilon})=\tilde{A}_{j}(\textbf{u}^{\varepsilon})+\frac{A^{0}_{j}}{\varepsilon}$$

\begin{equation}
\label{Aj}
= \left( \begin{array}{cccccc}
v^{\varepsilon}_{j} & 0 & \delta_{1j}\rho^{\varepsilon} & \delta_{2j}\rho^{\varepsilon} & \cdots & \delta_{dj}\rho^{\varepsilon}\\
0 & 0 & 0 & \cdots & \cdots & 0 \\
\delta_{j1} f(\textbf{u}^{\varepsilon}) & v^{\varepsilon}_{j} & 0 & \cdots & \cdots & 0 \\
\delta_{j2} f(\textbf{u}^{\varepsilon}) & 0 & v^{\varepsilon}_{j} & \cdots & \cdots & 0 \\
\cdots &  \cdots & \cdots &  v^{\varepsilon}_{j} & \cdots & \cdots \\
\cdots &  \cdots & \cdots &  \cdots & v^{\varepsilon}_{j} & \cdots \\
\delta_{jd}f(\textbf{u}^{\varepsilon}) & 0 & 0 & \cdots & \cdots & v^{\varepsilon}_{j} 
\end{array} \right) 
+ \left( \begin{array}{cccccc}
0 & 0 & 0 & \cdots & \cdots & 0\\
0 & 0 & \frac{\delta_{1j}}{\varepsilon} &  \frac{\delta_{2j}}{\varepsilon}& \cdots &  \frac{\delta_{dj}}{\varepsilon} \\
0 & \frac{\delta_{1j}}{\varepsilon} & 0 & 0 & \cdots & 0 \\
0 & \frac{\delta_{2j}}{\varepsilon} & 0 & \cdots & \cdots & 0 \\
\cdots &  \cdots & \cdots & \cdots & \cdots & \cdots \\
\cdots &  \cdots & \cdots &  \cdots & \cdots & \cdots \\
\cdots &  \frac{\delta_{dj}}{\varepsilon}  & 0 & \cdots & \cdots & \cdots
\end{array} \right).
\end{equation}

System (\ref{Compact_Incompressible}) is Friedrichs-symmetrizable by the $(d+2) \times (d+2)$ - symmetrizer

\begin{equation}
\label{Symmetrizer}
A_{0}(\textbf{u}^{\varepsilon})=diag\Bigg(\frac{f(\textbf{u}^{\varepsilon})}{{\rho}^{\varepsilon}}, 1, 1, \cdots, 1\Bigg).
\end{equation}

Now, looking at the matrices $A_{j}$ for  $j=1, \cdots, d$, we notice that they satisfy the structural conditions required by \emph{Majda} and \emph{Klainerman} in \cite{Majda} and \cite{Klainerman} to prove the convergence of the compressible Euler equations to the incompressible ones. Moreover, the initial data (\ref{Compact_incompressible_initial_data}) associated to system (\ref{Compact_Incompressible}) are consistent with respect to the hypothesis of 'slightly compressible initial data' in \cite{Majda}. Therefore, according to the incompressible limit in \cite{Majda}, the proof of convergence of a solution to system (\ref{Compact_Incompressible}) to a solution to the goal system (\ref{Goal_System}) is straightforward here, provided, as we have already point out, the apposite translation on the variable $\rho$ that we have made in Section 2.

\begin{remark}
\label{remark2}
Applying the artificial compressibility method to system (\ref{density_dependent}), we obtain an approximation system whose matrices and the related Friedrichs symmetrizer do not satisfy the assumptions stated in \cite{Majda}. For instance, in the two-dimensional case, setting $\textbf{u}^{\varepsilon}:=(\rho^{\varepsilon}, \tilde{P}^{\varepsilon}, v^{\varepsilon})$, we have the system
\begin{equation}
\partial_{t}\textbf{u}^{\varepsilon}+
\left( \begin{array}{cccc}
v^{\varepsilon}_{1} & 0 & 0 & 0 \\
0 & 0 & \frac{1}{\varepsilon} & 0 \\
0 & \frac{1}{\varepsilon \rho^{\varepsilon}} & v^{\varepsilon}_{1} & 0 \\
0 & 0 & 0 & v^{\varepsilon}_{1}
\end{array} \right)\partial_{x}\textbf{u}^{\varepsilon} + \left( \begin{array}{cccc}
v^{\varepsilon}_{2} & 0 & 0 & 0 \\
0 & 0 & 0 & \frac{1}{\varepsilon}\\
0 & 0 & v^{\varepsilon}_{2} & 0 \\
0 & \frac{1}{\varepsilon \rho^{\varepsilon}} & 0 & v^{\varepsilon}_{2}
\end{array} \right)\partial_{x}\textbf{u}^{\varepsilon}=0,
\end{equation}

where the $\varepsilon$-singular parts of $A_{1}, A_{2}$ are not constant. Therefore, this approximation does not work on system (\ref{density_dependent}).

\end{remark}

\end{document}